\theoremstyle{plain}
\theoremstyle{plain}
\newtheorem{theorem}{Theorem}[section]
\newtheorem{lemma}[theorem]{Lemma}
\newtheorem{corollary}[theorem]{Corollary}
\newtheorem*{rep@theorem}{\rep@title}
\newcommand{\newreptheorem}[2]{%
\newenvironment{rep#1}[1]{%
 \def\rep@title{#2 \ref{##1}}%
 \begin{rep@theorem}}%
 {\end{rep@theorem}}}
\theoremstyle{remark}
\newtheorem{remark}[equation]{Remark}
\theoremstyle{definition}
\newtheorem{definition}[theorem]{Definition}
\title{A bivariate approach to realrootedness of special polynomials}
\date{\today}
\author{Aurelien Gribinski\\ Jussieu, Inria (Paris)}
\email{axgribinski@gmail.com}
\begin{document}
\renewcommand{\contentsname}{}
\maketitle
\begin{abstract}

In this paper, we exhibit new monotonicity properties of roots of families of orthogonal polynomials $P_n^{(z)}(x)$ depending polynomially on a parameter (Laguerre and Gegenbauer).  Establishing that  $P_n^{(z)}(x)$ are realrooted in $z$ for $x$ in the support of orthogonality, we show realrootedness in $x$ and interlacing properties of $\partial_z^kP_n^{(z)}(x)$ for $k\leq n$ and $z$ in the appropriate interval, establishing a dual approach to orthogonality. 
\end{abstract}
\setcounter{tocdepth}{1}

\section{Introduction}\label{intro}
Orthogonal families of polynomials like generalized Laguerre and Gegenbauer polynomials have long been studied, and show up in all fields of mathematics and physics. But little has been said about the properties of such polynomials when we vary the underlying parameter (see Chapter~6 in  \cite{Szego} for a review). Asymptotic density of generalized Laguerre roots when parameters are negative was recently investigated in \cite{KUI}, pseudo orthogonality properties of Gegenbauer polynomials with negative parameters were considered in \cite{Alfa}, and asymptotic results on root densities with sliding parameter were obtained in \cite{KVA}. The most relevant attempt to change perspective, as we will see below, comes from Charlier polynomials. However, we believe that a bivariate systematic approach to such polynomials is still missing. \\
We study families of generalized orthogonal polynomials $P_n^{(z)}(x)$ depending on a parameter $z$ from a new angle, that is, as bivariate polynomials $P_n(x,z)$. Which allows us to fix for instance the usual variable $x$ and consider them instead as polynomials in $z$. In the following $P_n^{(z)}(x)$ will refer to generic orthogonal polynomials, either Gegenbauer or Laguerre (and possibly other families, to be determined).

Let us recall standard definitions.
\begin{definition}[Generalized Laguerre and Gegenbauer families]\label{defnfamily}
We define the generalized  Laguerre polynomials $L_n^{(z)}(x)$ (alternatively referred to as $L_n(x,z)$ in the paper) with parameter $z$ as the degree $n$ solution of Kummer's confluent hypergeometric differential equation :
\[
xy''(x)+(z+1-x)y'(x)+ny(x)=0.
\]
We have the polynomial representation: \[ L_n^{(z)}(x) = \sum_{k=0}^n \frac{(-1)^k  \prod_{j=k+1}^{n} (z+j)}{k!(n-k)!} x^k.  \]
Alternatively, we define the Gegenbauer (ultraspehrical) polynomials $G_n^{(z)}(x)$ (alternatively referred to as $G_n(x,z)$ in the paper) with parameter $z$ as the degree $n$ solution of the hypergeometric differential equation:
\[
(1-x^2)y''(x)-(2z+1)xy'(x)+n(n+2z)y(x)=0.
\]
They are explicitly given by: \[ G_n^{(z)}(x) = \sum_{k=0}^{\lfloor n/2 \rfloor} (-1)^k \frac{   \prod_{i=0}^{n-k-1}(z+i) }{k! (n-2k)!} (2x)^{n-2k}.    \]
In the following, we will  find it more appropriate to consider instead the modified family, that verifies the exact same differential equation:
\[
 \hat{G}_n^{(z)}(x):= P_n^{(z-1/2,z-1/2)}(x) = \frac{\prod_{i=0}^{n-1}(z+1/2+i)}{2^n\prod_{i=0}^{n-1}(z+i/2)} G_n^{(z)}(x).
 \]
\end{definition}
They correspond to Jacobi polynomials $ P_n$ with equal parameters, and are just a rescaling of Gegenbauer polynomials. As a consequence, the roots in $x$ are identical for $\hat{G}_n^{(z)}(x)$ and $G_n^{(z)}(x)$ (as functions of $z$). We have:
\[
 \hat{G}_n^{(z)}(x)= \sum_{k=0}^{\lfloor n/2 \rfloor} (-1)^k \frac{\prod_{i=n-\lfloor n/2 \rfloor}^{n-k-1}(z+i)\prod_{i=\lfloor n/2 \rfloor}^{n-1}(z+1/2+i)}{k! (n-2k)!} x^{n-2k}2^{-2k}.
 \]
These two families share many strong properties as polynomials in $x$ due to the common orthogonality property.

\begin{theorem}[Theorem~3.3.1 in \cite{Szego}] Take $z>-1$ in the Laguerre case and $z> -1/2$ in the Gegenbauer case.
 The $P_n^{(z)}(x)$ form families of orthogonal polynomials  with real distinct roots on respectively $[0,+\infty]$ (Laguerre) and $[-1,1]$(Gegenbauer), with respective orthogonality weight densities $e^{-x}x^z$ (Laguerre) and $(1-x^2)^{z-1/2}$ (Gegenbauer). 
\end{theorem}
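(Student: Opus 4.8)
The plan is to reduce the statement to classical Sturm--Liouville theory. First I would put each defining ODE into self-adjoint form by supplying the integrating factor. In the Laguerre case $xy''+(z+1-x)y'+ny=0$ becomes $\bigl(x^{z+1}e^{-x}y'\bigr)'=-n\,x^{z}e^{-x}y$, which exhibits the weight $w(x)=x^{z}e^{-x}$ on $(0,\infty)$ and the eigenvalue $\lambda_n=-n$; in the Gegenbauer case $(1-x^2)y''-(2z+1)xy'+n(n+2z)y=0$ becomes $\bigl((1-x^2)^{z+1/2}y'\bigr)'=-n(n+2z)(1-x^2)^{z-1/2}y$, exhibiting $w(x)=(1-x^2)^{z-1/2}$ on $(-1,1)$ and $\lambda_n=-n(n+2z)$. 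Under the stated hypotheses $w$ is positive and integrable on the open interval — integrability near $0$ requires $z>-1$, near $\pm 1$ requires $z>-1/2$ — and the explicit formulas of Definition~\ref{defnfamily} show the leading coefficient of $P_n^{(z)}$ does not vanish in this range, so $\deg P_n^{(z)}=n$.

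Next I would establish orthogonality. For $m\neq n$, multiply the equation for $P_n^{(z)}$ by $P_m^{(z)}$, subtract the symmetric expression, and integrate over the interval: by the self-adjoint form the left side telescopes to the difference of the endpoint values of $p(x)\bigl((P_n^{(z)})'P_m^{(z)}-(P_m^{(z)})'P_n^{(z)}\bigr)$, while the right side equals $(\lambda_m-\lambda_n)\int w\,P_m^{(z)}P_n^{(z)}$. The boundary term vanishes because $p$ vanishes at the finite singular endpoints (this is exactly where $z>-1$, resp.\ $z>-1/2$, is used) and because $e^{-x}$ suppresses polynomial growth at $+\infty$ in the Laguerre case, the factors $P_m^{(z)},P_n^{(z)}$ and their derivatives being polynomials. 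Since $\lambda_m\neq\lambda_n$ — immediate for Laguerre, and for Gegenbauer because $n\mapsto n(n+2z)$ has positive increments $2n+1+2z$ once $z>-1/2$ — we conclude $\int w\,P_m^{(z)}P_n^{(z)}=0$. As $P_0^{(z)},\dots,P_{n-1}^{(z)}$ have distinct degrees $0,\dots,n-1$, they span all polynomials of degree $<n$, so $P_n^{(z)}$ is orthogonal to each of them.

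The real, simple roots then follow by the usual argument for orthogonal polynomials relative to a positive weight on an interval $I$ (here $(0,\infty)$, resp.\ $(-1,1)$): if $P_n^{(z)}$ changed sign at only $k<n$ interior points $x_1<\dots<x_k$, then $q(x)=\prod_{i=1}^k(x-x_i)$ has degree $<n$, so $\int_I w\,P_n^{(z)}q=0$ by orthogonality; but $P_n^{(z)}q$ changes sign nowhere in $I$ (at each $x_i$ both factors change sign, and $P_n^{(z)}$ has no other sign changes there), is not identically zero, and $w>0$, a contradiction. Hence $P_n^{(z)}$ has at least $n$ sign changes in $I$, and having degree exactly $n$ it has precisely $n$ simple real roots, all lying in $I$.

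The only genuinely delicate point is the endpoint analysis in the orthogonality step: one must verify that $p(x)\bigl((P_n^{(z)})'P_m^{(z)}-(P_m^{(z)})'P_n^{(z)}\bigr)$ really tends to $0$ at the singular points of the ODE, and it is exactly the integrability thresholds $z>-1$ and $z>-1/2$ that guarantee this. The self-adjoint reduction, the distinctness of the eigenvalues, and the sign-change argument are all routine.
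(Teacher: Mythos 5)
This theorem is cited from Szeg\H{o} (Theorem~3.3.1) and the paper offers no proof of its own, so there is no internal argument to compare against; your Sturm--Liouville proof is the classical one and is essentially what Szeg\H{o} himself does (self-adjoint form, Green's-identity orthogonality, and the sign-change argument for real simple roots inside the support). The reduction to self-adjoint form, the integrating factors $x^{z+1}e^{-x}$ and $(1-x^2)^{z+1/2}$, the vanishing of $p$ at the singular endpoints under exactly the stated parameter constraints, the distinctness of the eigenvalues, and the sign-change argument are all correct.

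One small slip: your claim that the explicit formulas show the leading coefficient of $P_n^{(z)}$ never vanishes in the allowed range is false in the Gegenbauer case at $z=0$, where the formula in Definition~\ref{defnfamily} gives $G_n^{(0)}(x)\equiv 0$ (the leading coefficient $\prod_{i=0}^{n-1}(z+i)$ has the factor $z$). The theorem is still true at $z=0$ because the Chebyshev equation has a degree-$n$ polynomial solution; the issue is a normalization artifact, which is precisely why the paper's remark after the theorem introduces the rescaled family $\hat{G}_n^{(z)}$. You should either exclude $z=0$, appeal to that remark, or run your argument directly on the degree-$n$ solution of the ODE rather than on the specific normalized formula.
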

\begin{remark}
Notice that there is an issue at $z=0$, as we have  $G_n^{(0)}(x)=0$, so that strictly speaking the Gegenbauer family isn't orthogonal ponctually, but by rescaling it using the above modified Gegenbauer family, we remove the singularity and make it orthogonal. 
\end{remark}

We can relate different degree polynomials of the family through the concept of interlacing; let us recall it's meaning:
\begin{definition}
Assume that a polynomial $p$ is of degree $n$ and has real roots $p_1\geq p_2\dots\geq p_n$, and $q$ of degree $n$ or $n-1$ has roots $q_1\geq q_2\dots\geq q_{n-1} (\geq q_n)$.We say that they are interlacing if 
\[
p_1 \geq q_1 \geq p_2 \geq q_2\dots.\geq p_{n-1}\geq q_{n-1}\geq p_n (\geq q_n)
\]
or if the equivalent inequality permuting the roles of $p$ and $q$ holds in case the polynomials have the same degree.
 We also say that the interlacing is strict when all inequalities above are strict, meaning that they have distinct simple roots. 
 \end{definition}

It is a classical result that:
\begin{theorem}[Interlacing in $x$, see Theorem~3.3.2 in \cite{Szego}]\label{interlderiv} For $z>-1$ in the Laguerre case and $z> -1/2$ in the Gegenbauer case (again for $z=0$ we consider the modified Gegenbauer family),
 $P_n^{(z)}(x)$ and $P_{n-1}^{(z)}(x)$ interlace strictly, as well as  $P_n^{(z)}(x)$ and $\partial_xP_{n}^{(z)}(x)$. 
\end{theorem}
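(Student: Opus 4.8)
The plan is to derive both interlacing statements from the orthogonality data recorded in Theorem~3.3.1 of \cite{Szego} above: for $z$ in the stated ranges the polynomials $P_n^{(z)}(x)$ form, up to normalization, the orthogonal sequence for a strictly positive weight $w_z$ on the orthogonality interval ($(0,\infty)$ for Laguerre, $(-1,1)$ for Gegenbauer), and in particular $P_n^{(z)}$ has $n$ distinct real roots lying in the interior of that interval. In the Gegenbauer case at $z=0$ I replace $G_n^{(z)}$ throughout by the modified family $\hat G_n^{(z)}$ of Definition~\ref{defnfamily}, which shares the same roots in $x$ and is a genuine orthogonal family for a positive weight, so the argument goes through unchanged; this is the only place where the exceptional value $z=0$ needs attention.

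I would first dispose of the pair $\bigl(P_n^{(z)}(x),\,\partial_x P_n^{(z)}(x)\bigr)$ by Rolle's theorem. Let $x_1>x_2>\dots>x_n$ be the roots of $P_n^{(z)}$. On each gap $(x_{i+1},x_i)$ the function $\partial_x P_n^{(z)}$ vanishes at least once; this produces $n-1$ roots of $\partial_x P_n^{(z)}$, and since $\deg\partial_x P_n^{(z)}=n-1$ these are all of them, one strictly inside each gap and all simple. By the definition of interlacing recalled above, this is exactly the strict interlacing of $P_n^{(z)}$ and $\partial_x P_n^{(z)}$.

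For the pair $\bigl(P_n^{(z)}(x),\,P_{n-1}^{(z)}(x)\bigr)$ I would invoke the confluent Christoffel--Darboux identity. With $h_k>0$ the squared norms and $k_j\neq 0$ the leading coefficients of $P_j^{(z)}$, it reads
\[
\sum_{k=0}^{n-1}\frac{P_k^{(z)}(x)^2}{h_k}=\frac{k_{n-1}}{k_n\,h_{n-1}}\Bigl(\partial_x P_n^{(z)}(x)\,P_{n-1}^{(z)}(x)-P_n^{(z)}(x)\,\partial_x P_{n-1}^{(z)}(x)\Bigr),
\]
whose left-hand side is strictly positive for every real $x$. Hence the Wronskian $W(x):=\partial_x P_n^{(z)}(x)\,P_{n-1}^{(z)}(x)-P_n^{(z)}(x)\,\partial_x P_{n-1}^{(z)}(x)$ is of constant nonzero sign; in particular $P_n^{(z)}$ and $P_{n-1}^{(z)}$ have no root in common. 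Evaluating at a root $x_i$ of $P_n^{(z)}$ gives $W(x_i)=\partial_x P_n^{(z)}(x_i)\,P_{n-1}^{(z)}(x_i)$, and because $\partial_x P_n^{(z)}$ takes opposite signs at two consecutive (simple) roots of $P_n^{(z)}$ while $W$ keeps its sign, $P_{n-1}^{(z)}$ must change sign across each gap $(x_{i+1},x_i)$ and so has a root there. This locates $n-1$ roots of $P_{n-1}^{(z)}$, one per gap; since $\deg P_{n-1}^{(z)}=n-1$ these exhaust its roots, which are therefore simple and strictly interlace those of $P_n^{(z)}$. (Alternatively one may run the classical induction on the three-term recurrence $P_{n+1}^{(z)}=(A_nx+B_n)P_n^{(z)}-C_nP_{n-1}^{(z)}$, using $C_n>0$.)

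Everything here is classical and there is no genuine obstacle; the work is bookkeeping. The two points to be careful about are that the hypotheses of the orthogonality theorem really do hold on the stated parameter ranges (the weights $e^{-x}x^z$ and $(1-x^2)^{z-1/2}$ are positive and integrable precisely when $z>-1$, resp. $z>-1/2$), and the passage to $\hat G_n^{(z)}$ at $z=0$ mentioned above. The sign of the constant $k_{n-1}/(k_n h_{n-1})$ in the Christoffel--Darboux identity is easily computed from the explicit formulas in Definition~\ref{defnfamily}, but is in fact irrelevant: only the constancy of the sign of the right-hand side is used.
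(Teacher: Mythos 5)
Your proof is correct, and since the paper itself only cites Szeg\H{o}'s Theorem~3.3.2 without reproducing an argument, your Rolle-plus-(confluent)-Christoffel--Darboux route is precisely the classical proof that reference uses. Both interlacing claims, the exclusion of common roots via the strict positivity of the Wronskian, and the $z=0$ Gegenbauer caveat handled by passing to $\hat G_n^{(z)}$ are all addressed correctly.
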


We will extensively use the following theorem, describing the monotonous evolution of zeros when the parameter is moving.

\begin{theorem} [Monotonicity of the roots with respect to the parameter by Markov and Stieltjes, see Theorem~6.12.1 and Theorem~6.21.1 in \cite{Szego}] \label{monoroots} 
If $x_i(z)$ are the roots of $L_n^{(z)}(x)$, then for $z>-1$ they are increasing with $z$: $\frac{d}{dz}x_i(z) > 0$, $ i \leq n$ (in Szego's notations, $w(x,z)= e^{-x}x^z $ is such that $\frac{\partial_zw(x,z)}{w(x,z)}= log(x)$ which is increasing). Also, the positive roots $y_i(z)$ of  $G_n^{(z)}(x)$  are decreasing with $z$ for $z>-1/2 $:  $\frac{d}{dz}y_i(z) < 0$ (the negative  ones are increasing by symmetry). 
\end{theorem}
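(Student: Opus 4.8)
The plan is to establish the common mechanism behind both assertions, namely \emph{Markov's theorem}: let $p_n(\cdot,\tau)$ be the orthonormal polynomials for a positive weight $w(x,\tau)$ on a fixed interval $I$, assumed differentiable in $\tau$ and decaying fast enough that one may differentiate under the integral sign; if $x\mapsto \partial_\tau\log w(x,\tau)$ is strictly increasing on $I$, then each zero $x_\nu(\tau)$ of $p_n(\cdot,\tau)$ satisfies $x_\nu'(\tau)>0$. Both parts of the theorem then follow by choosing the weight appropriately and, for Gegenbauer, after a preliminary change of variables.

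First I would differentiate $p_n(x_\nu(\tau),\tau)=0$ in $\tau$ to get $x_\nu'(\tau)=-\partial_\tau p_n(x_\nu,\tau)/\partial_x p_n(x_\nu,\tau)$, the denominator being nonzero because zeros of orthogonal polynomials are simple. To handle the numerator I would expand $\partial_\tau p_n(\cdot,\tau)=\sum_{k=0}^n c_k\, p_k(\cdot,\tau)$ (a polynomial of degree at most $n$) and read off the coefficients by differentiating the orthonormality relations $\int_I p_n p_k\, w\, dx=\delta_{nk}$: for $k<n$ the term $\int_I p_n\,\partial_\tau p_k\, w\, dx$ vanishes by orthogonality, leaving $c_k=-\int_I p_n p_k\,(\partial_\tau\log w)\,w\,dx$. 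Evaluating at a zero $x_\nu$ annihilates the $k=n$ summand, and pairing the rest against $p_k(x_\nu)$ converts the sum into $-\int_I p_n(t)\,K_{n-1}(t,x_\nu)\,(\partial_\tau\log w)(t,\tau)\,w(t,\tau)\,dt$, where $K_{n-1}$ is the Christoffel--Darboux kernel; since $p_n(x_\nu)=0$, the Christoffel--Darboux formula reduces $K_{n-1}(t,x_\nu)$ to a scalar multiple of $p_n(t)/(t-x_\nu)$, and one arrives at
\[
x_\nu'(\tau)=c_\nu\int_I \frac{p_n(t,\tau)^2}{t-x_\nu}\,(\partial_\tau\log w)(t,\tau)\,w(t,\tau)\,dt ,
\]
with $c_\nu=\dfrac{k_{n-1}\,p_{n-1}(x_\nu)}{k_n\,p_n'(x_\nu)}$. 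I would then note $c_\nu>0$: multiplying top and bottom by $p_n'(x_\nu)$ turns it into $K_{n-1}(x_\nu,x_\nu)/\big(p_n'(x_\nu)\big)^2$, and $K_{n-1}(x_\nu,x_\nu)=\sum_{k<n}p_k(x_\nu)^2>0$ is the reciprocal of the $\nu$-th Gauss--Christoffel quadrature weight.

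It remains to sign the integral. The crucial point is that $\int_I p_n(t)\cdot\frac{p_n(t)}{t-x_\nu}\,w(t)\,dt=0$, since $p_n(t)/(t-x_\nu)$ has degree $n-1$; hence $(\partial_\tau\log w)(t)$ may be replaced by $(\partial_\tau\log w)(t)-(\partial_\tau\log w)(x_\nu)$ without changing the integral, which becomes $\int_I p_n(t)^2\,\dfrac{(\partial_\tau\log w)(t)-(\partial_\tau\log w)(x_\nu)}{t-x_\nu}\,w(t)\,dt$. If $\partial_\tau\log w$ is strictly increasing the difference quotient is nonnegative and positive on a set of positive measure, while $p_n^2 w>0$ a.e., so the integral is $>0$, giving $x_\nu'(\tau)>0$. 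For Laguerre this applies directly with $w(x,z)=e^{-x}x^z$ on $(0,\infty)$, where $\partial_z\log w=\log x$ is strictly increasing, so all $x_i(z)$ increase. For Gegenbauer the naive weight $w(x,z)=(1-x^2)^{z-1/2}$ on $(-1,1)$ will not do, since $\partial_z\log w=\log(1-x^2)$ is not monotone; instead I would exploit $G_n^{(z)}(-x)=(-1)^n G_n^{(z)}(x)$ to write $G_n^{(z)}$ as a power of $x$ times a polynomial in $u=x^2$, orthogonal on $(0,1)$ for the weight $(1-u)^{z-1/2}u^{\pm 1/2}$ (the sign depending on the parity of $n$); now $\partial_z\log$ of this weight equals $\log(1-u)$, strictly \emph{decreasing} on $(0,1)$, so the decreasing-weight analogue of the argument forces the zeros $u_i(z)=y_i(z)^2$ to decrease, i.e.\ the positive zeros $y_i(z)$ of $G_n^{(z)}$ (equivalently of $\hat G_n^{(z)}$) decrease and, by symmetry, the negative ones increase.

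The main obstacle is not any individual computation but the structural pieces that make the argument work: the Christoffel--Darboux collapse of the kernel at a zero, the positivity of $c_\nu$ through positivity of the quadrature weights, and --- specifically for Gegenbauer --- the recognition that the bare weight fails the monotonicity hypothesis, so that one must descend to the squared variable where the induced weight does satisfy it. Verifying the standing analytic hypotheses (positivity and integrability of $w$, legitimacy of differentiating under the integral sign, simplicity of zeros) is routine for Laguerre and Gegenbauer but should be recorded.
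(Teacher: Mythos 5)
The paper does not actually prove this theorem; it is cited directly from Szeg\H{o} (Theorems~6.12.1 and~6.21.1) as a classical result. Your reconstruction is correct and is the standard argument: Markov's formula $x_\nu'(\tau)=c_\nu\int_I p_n(t)^2\,\frac{\partial_\tau\log w(t)-\partial_\tau\log w(x_\nu)}{t-x_\nu}\,w(t)\,dt$ with $c_\nu=K_{n-1}(x_\nu,x_\nu)/p_n'(x_\nu)^2>0$, applied directly to $w=e^{-x}x^z$ in the Laguerre case, and, in the Gegenbauer case, after the necessary quadratic change of variable $u=x^2$ which turns the non-monotone $\partial_z\log(1-x^2)^{z-1/2}=\log(1-x^2)$ on $(-1,1)$ into the strictly decreasing $\log(1-u)$ for the induced Jacobi-type weight on $(0,1)$.
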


It follows from their hypergeometric expressions  that $L_n^{(z)}(x)$ and  $G_n^{(z)}(x)$ are of  degree exactly $n$ in $z$ (except for $x=0$ in the Gegenbauer case, see expressions in \ref{defnfamily}), allowing a duality approach. Switching the variable and the parameter, it was found in the Laguerre case that the polynomials thus obtained have similar properties to the original polynomials.

\begin{theorem} (Charlier polynomials, see p.44 and p.50 in \cite{DOP}) Define the following family of polynomials:
\[
C_n^{(x_0)}(z)= \frac{(-1)^n}{\mu^n}n! L_n^{(z-n)}(x_0).
\]
Then this family is discrete orthogonal for $x_0>0$, with the following relations, for all integers $m$ and $n$:
\[
\sum_{z=0}^{\infty} e^{-x_0}\frac{x_0^z}{z!}C_n^{(x_0)}(z)C_m^{(x_0)}(z)= \delta_{nm}n!x_0^{-n}.
\]

\end{theorem}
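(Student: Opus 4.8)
The plan is to reduce the asserted discrete orthogonality to a short generating-function computation, after first rewriting $C_n^{(x_0)}(z)$ in the basis of falling factorials $(z)_\ell := z(z-1)\cdots(z-\ell+1)$. I will take the normalization $\mu = x_0$, which is the one making the stated right-hand side correct (a general $\mu$ merely introduces a power of $x_0/\mu$). Starting from the polynomial representation of $L_n^{(z)}(x)$ in Definition~\ref{defnfamily} and substituting $z\mapsto z-n$, the coefficient product equals $\prod_{j=k+1}^{n}(z-n+j) = z(z-1)\cdots(z-n+k+1) = (z)_{n-k}$. Reindexing $k\mapsto n-k$ then gives
\[
C_n^{(x_0)}(z) \;=\; \sum_{\ell=0}^{n}\binom{n}{\ell}\,(z)_\ell\,\Bigl(-\tfrac{1}{x_0}\Bigr)^{\ell},
\]
a purely mechanical rewriting of the hypergeometric formula.

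Next, multiplying this identity by $t^n/n!$, summing over $n$, and interchanging the sums (using $\sum_{n\ge \ell}\binom{n}{\ell}t^n/n! = e^t t^\ell/\ell!$ and the binomial series), I obtain the generating function $\sum_{n\ge 0}C_n^{(x_0)}(z)\,t^n/n! = e^{t}(1-t/x_0)^{z}$. Forming the double generating function of the quantities in the theorem, interchanging the order of summation, and applying this twice yields
\[
\sum_{n,m\ge 0}\frac{s^n t^m}{n!\,m!}\sum_{z=0}^{\infty}e^{-x_0}\frac{x_0^{z}}{z!}\,C_n^{(x_0)}(z)\,C_m^{(x_0)}(z)
\;=\; e^{s+t-x_0}\sum_{z=0}^{\infty}\frac{\bigl[x_0(1-s/x_0)(1-t/x_0)\bigr]^{z}}{z!}.
\]
The inner sum equals $\exp\!\bigl(x_0-s-t+st/x_0\bigr)$, so the right-hand side collapses to $e^{st/x_0} = \sum_{n\ge 0} s^n t^n/(n!\,x_0^{n})$. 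Comparing the coefficient of $s^n t^m$ then gives $\sum_z e^{-x_0}\frac{x_0^z}{z!}C_n^{(x_0)}(z)C_m^{(x_0)}(z) = \delta_{nm}\,n!\,x_0^{-n}$, which is the claim.

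The only delicate points are the two interchanges of summation, which I would justify by working with $s,t$ in a small neighbourhood of the origin where every series in sight converges absolutely (the Poisson weights decay faster than geometrically and each $C_n^{(x_0)}$ is a polynomial), together with the bookkeeping of the normalizing constant $\mu$; the algebra itself is automatic once the falling-factorial expansion is in place, so I expect no real obstacle. For a version not using generating functions, one can instead argue straight from that expansion: expand $C_n^{(x_0)}(z)C_m^{(x_0)}(z)$ via $(z)_i(z)_j = \sum_r\binom{i}{r}\binom{j}{r}r!\,(z)_{i+j-r}$, use the Poisson moment formula $\sum_{z\ge 0}e^{-x_0}x_0^{z}(z)_\ell/z! = x_0^{\ell}$, and collapse the resulting triple sum with $\binom{n}{i}\binom{i}{r}=\binom{n}{r}\binom{n-r}{i-r}$ and $\sum_\ell\binom{m}{\ell}(-1)^\ell=[m=0]$, which forces $r=n=m$ and recovers the same constant $n!\,x_0^{-n}$.
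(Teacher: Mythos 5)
Your proposal is correct, and it supplies a genuine proof where the paper supplies none: the paper simply cites Nikiforov--Suslov--Uvarov \cite{DOP} for this classical fact, so there is no internal argument to compare against. Your reduction to the falling-factorial form $C_n^{(x_0)}(z)=\sum_\ell\binom{n}{\ell}(z)_\ell(-1/x_0)^\ell$ is the standard presentation of Charlier polynomials, the generating function $e^t(1-t/x_0)^z$ is the standard one, and the double-generating-function computation collapsing to $e^{st/x_0}$ is exactly the textbook route to the orthogonality relation; I have checked the algebra and it is all sound. You are also right to flag the normalization: as the theorem is stated, with weight $e^{-x_0}x_0^z/z!$ and right-hand side $\delta_{nm}n!\,x_0^{-n}$, one must have $\mu=x_0$, otherwise an overall factor $(x_0/\mu)^{n+m}$ appears; the paper's $\mu$ is evidently meant to be $x_0$.

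Two small remarks. First, you needn't restrict $s,t$ to a neighbourhood of the origin: bounding $|C_n^{(x_0)}(z)|\le\sum_\ell\binom{n}{\ell}(z)_\ell x_0^{-\ell}$ (valid since $(z)_\ell\ge 0$ for integer $z\ge 0$) and summing first over $n$, then $\ell$, then $z$ shows that
\[
\sum_{n,m\ge 0}\frac{|s|^n|t|^m}{n!\,m!}\sum_{z\ge 0}e^{-x_0}\frac{x_0^z}{z!}\bigl|C_n^{(x_0)}(z)\bigr|\bigl|C_m^{(x_0)}(z)\bigr|
\;\le\; e^{|s|+|t|-x_0}\exp\!\bigl[x_0(1+|s|/x_0)(1+|t|/x_0)\bigr]<\infty
\]
for all $s,t$, so Tonelli justifies every interchange unconditionally. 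Second, the alternative purely combinatorial route you sketch via $(z)_i(z)_j=\sum_r\binom{i}{r}\binom{j}{r}r!\,(z)_{i+j-r}$ and the Poisson falling-moment identity $\sum_{z\ge 0}e^{-x_0}x_0^z(z)_\ell/z!=x_0^\ell$ is also correct and avoids convergence bookkeeping entirely; either version would serve as a self-contained replacement for the citation.
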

Using the fact (see Theorem~3.3.1 in \cite{Szego}) that polynomials from discrete orthogonal families have real distinct roots, we readily get the following.

\begin{corollary}\label{Laguerrez}
For fixed $x_0\in[0,+\infty[$, $L_n(x_0,z)$ is a real-rooted polynomial in $z$ of degree $n$ with simple roots.
\end{corollary}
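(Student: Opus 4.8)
The plan is to read the statement off directly from the Charlier theorem just stated, with one extra line of bookkeeping for the endpoint $x_0=0$. First I would fix $x_0>0$. Since $C_n^{(x_0)}(z)=\frac{(-1)^n}{\mu^n}n!\,L_n^{(z-n)}(x_0)$ and the normalizing constant $\frac{(-1)^n}{\mu^n}n!$ is nonzero (this is where $x_0>0$, hence $\mu\neq 0$, is used), the two polynomials $C_n^{(x_0)}(z)$ and $L_n(x_0,z-n)$ have exactly the same zeros in $z$ with the same multiplicities. The Charlier family being discrete orthogonal for $x_0>0$, Theorem~3.3.1 in \cite{Szego} (members of a discrete orthogonal family have real, distinct roots) applies to $C_n^{(x_0)}(z)$, so $L_n(x_0,z-n)$ is real-rooted in $z$ with simple roots; composing with the affine substitution $z\mapsto z+n$, which is a bijection of $\mathbb{R}$ and preserves simplicity of roots, gives that $L_n(x_0,z)$ is real-rooted in $z$ with simple roots.

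Next I would pin down the degree. From the polynomial representation in Definition~\ref{defnfamily}, the coefficient of $x^0$ in $L_n^{(z)}(x)$ equals $\frac{1}{n!}\prod_{j=1}^{n}(z+j)$, a polynomial of degree $n$ in $z$, whereas the coefficient of $x^k$ for $k\geq 1$ has degree $n-k<n$ in $z$. Hence for every fixed $x_0$ the polynomial $L_n(x_0,z)$ has degree exactly $n$ in $z$, with leading coefficient $1/n!$ independent of $x_0$. Together with the previous paragraph this proves the corollary for all $x_0>0$.

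Finally I would dispose of the endpoint $x_0=0$, which is not covered by the Charlier theorem (it requires $x_0>0$). Plugging $x=0$ into the representation collapses everything to $L_n(0,z)=\frac{1}{n!}\prod_{j=1}^{n}(z+j)$, whose zeros are $z=-1,-2,\dots,-n$: real, distinct, and $n$ in number. So the statement holds on all of $[0,+\infty[$.

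I do not expect a genuine obstacle here: the substance is entirely contained in the Charlier orthogonality relation recorded above, and the only things requiring care are the trivial affine shift $z\mapsto z+n$, the nonvanishing of the normalization constant (so that zeros of $C_n^{(x_0)}$ and of $L_n(x_0,z-n)$ coincide), and the degenerate case $x_0=0$; all three are routine.
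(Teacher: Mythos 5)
Your proof is correct and follows essentially the same route as the paper: invoke Charlier discrete orthogonality together with Szeg\H{o}'s Theorem~3.3.1 to get real simple roots for $x_0>0$, and check $x_0=0$ directly from the explicit formula. The extra bookkeeping you add (nonvanishing of the normalizing constant, the affine shift $z\mapsto z+n$, and reading the exact degree $n$ with leading coefficient $1/n!$ off the $k=0$ term of the polynomial representation) merely makes explicit what the paper leaves implicit.
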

\begin{remark}
The case $x_0=0$ doesn't follow from orthogonality but can be trivially checked by hand using the expression  in \ref{defnfamily}. 
\end{remark}

The same discrete orthogonality property can't be used to prove a similar realrootedness result for Gegenbauer polynomials. In fact, as we will see, there are constant roots in $z$ shared among polynomials of increasing degree ($G_n$ and $G_{n-1}$) and if there was a relation of orthogonality in $z$ the roots of successive polynomials would be distinct (Theorem~\ref{interlderiv} applies for discrete orthogonality too). Instead, we give a direct proof of realrootedness in Section~2.

\begin{theorem}\label{Gegenbauerz}
For fixed $x_0\in[-1,1]$, $G_n(x_0,z)$ is a real-rooted polynomial in $z$ of degree $n$ (except for $x_0=0$, but in this case it still has real roots, though fewer). Its nonconstant roots in $z$ ( $\lfloor n/2 \rfloor$ of them, see Definition~\ref{defngtilde}) are increasing for $x\in[-1,0[$, and decreasing for $x\in]0,1]$, with a convergence to $+\infty$ when $x \to 0$.
\end{theorem}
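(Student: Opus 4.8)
The plan is to strip off the factor of $G_n^{(z)}(x)$ responsible for the constant roots and then recover the remaining roots from the geometry of the zero curve of the bivariate polynomial, using the Markov--Stieltjes monotonicity of Theorem~\ref{monoroots} where it applies and continuity elsewhere. First I would isolate the constant factor: every coefficient of $G_n^{(z)}(x)$, viewed as a polynomial in $x$, equals $(-1)^k$ times a positive scalar times a product $\prod_{i=\lceil n/2\rceil}^{\,n-k-1}(z+i)$, and each such product is divisible by $C_n(z):=\prod_{i=0}^{\lceil n/2\rceil-1}(z+i)$; hence $G_n^{(z)}(x)=C_n(z)\,\tilde G_n^{(z)}(x)$, where $\tilde G_n^{(z)}(x)$ denotes the nonconstant factor (Definition~\ref{defngtilde}), of degree exactly $m:=\lfloor n/2\rfloor$ in $z$ with leading coefficient $\tfrac{(2x)^n}{n!}$, and of degree $n$ in $x$ with the same $x$-roots as $G_n^{(z)}(x)$ away from $z\in\{0,-1,-2,\dots\}$. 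The $\lceil n/2\rceil$ roots of $C_n$ are the constant roots, and real-rootedness of $G_n(x_0,\cdot)$ is equivalent to that of $\tilde G_n(x_0,\cdot)$; by $\tilde G_n(-x,z)=(-1)^n\tilde G_n(x,z)$ I may take $x_0\in[0,1]$. The endpoints are explicit: one computes $\tilde G_n^{(z)}(1)=\tfrac{2^n}{n!}\prod_{\ell=1}^{m}(z+\ell-\tfrac12)$, real-rooted with simple roots $-\tfrac12,\dots,-(m-\tfrac12)$, while $\tilde G_n^{(z)}(0)$ is a nonzero constant in $z$ for $n$ even and vanishes identically for $n$ odd (this is the exception at $x_0=0$), and the coefficient pattern of $\tilde G_n^{(z)}(x)$ shows its $z$-roots diverge to $+\infty$ as $x\to0$.

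Next, the geometric core. For $z>-1/2$, Theorem~3.3.1 in \cite{Szego} together with Theorem~\ref{monoroots} gives that $G_n^{(z)}(x)$ has $m$ positive roots $1>y_1(z)>\dots>y_m(z)>0$, each a strictly decreasing, real-analytic function of $z$, and classical asymptotics (the weight $(1-x^2)^{z-1/2}$ concentrating at $x=0$, equivalently Gegenbauer polynomials rescaling to Hermite polynomials) give $y_i(z)\to0$ as $z\to+\infty$. Hence for $x_0\in(0,1)$ each equation $y_i(z)=x_0$ with $z>-1/2$ has at most one solution $\zeta_i(x_0)$; these solutions are pairwise distinct, since a coincidence would make $x_0$ a multiple root of some $G_n^{(z)}(x)$ with $z>-1/2$, which is impossible; and on its domain $\zeta_i$ is strictly decreasing with $\zeta_i(x_0)\to+\infty$ as $x_0\to0^+$, being the inverse of the strictly decreasing $y_i$. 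In particular, for $x_0$ small enough --- below $\lim_{z\to-1/2^+}y_i(z)$ for every $i$ --- this already yields $m$ distinct real roots of the degree-$m$ polynomial $\tilde G_n(x_0,\cdot)$, so real-rootedness, monotonicity and divergence all hold on that range of $x_0$.

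For the remaining $x_0\in(0,1)$ the missing $z$-roots lie in the ``pseudo-orthogonal'' strip $z<-1/2$. One plan is to continue the curves $y_i$ past $z=-1/2$: the discriminant of $G_n^{(z)}(x)$ in $x$ is nonzero at $z=-1/2$, so each $y_i$ extends real-analytically and can leave the real line only at a multiple root of $G_n^{(z)}(x)$ in $x$; one then checks that as $x_0$ sweeps $(0,1)$ the branch-pieces invoked stay monotone in $z$, so their local inverses glue to $m$ globally monotone functions $\zeta_i$ accounting for exactly $m=\deg_z\tilde G_n(x_0,\cdot)$ real roots. An equivalent route is the quadratic transformation $\tilde G_{2m}^{(z)}(x)=c(x)\,P_m^{(z-1/2,-1/2)}(2x^2-1)$, $\tilde G_{2m+1}^{(z)}(x)=c'(x)\,P_m^{(z-1/2,1/2)}(2x^2-1)$ with $c(x),c'(x)$ independent of $z$, reducing everything to real-rootedness and monotonicity in $\alpha$ of the zeros of $P_m^{(\alpha,\pm1/2)}(t_0)$ for fixed $t_0=2x_0^2-1\in(-1,1)$.

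The main obstacle is exactly this last step. In the strip $z<-1/2$ the Markov--Stieltjes comparison is unavailable, so one must rule out that two real $z$-roots collide and escape into the complex plane as $x_0$ decreases from $1$ --- equivalently, that $\tilde G_n(x_0,\cdot)$ has only simple zeros for every $x_0\in(0,1]$, whereupon real-rootedness propagates from the explicitly solvable point $x_0=1$ by continuity --- and one must verify that the invoked branches remain monotone. I would attack this through the Jacobi reduction above together with a Sturm-type interlacing induction on $m$, exploiting the special shape of the coefficients of $\tilde G_n^{(z)}(x)$ (each a product $\prod_j(z+j)$ of linear forms with negative-integer roots), or via an extension of the Markov--Stieltjes comparison to the pseudo-orthogonal regime in the spirit of \cite{Alfa}.
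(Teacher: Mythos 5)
Your setup is right and matches the paper's: strip off $C_n(z)=\prod_{i=0}^{\lceil n/2\rceil-1}(z+i)$ to get $\tilde G_n$ of degree $\lfloor n/2\rfloor$ in $z$, reduce to $x_0\in(0,1]$ by parity, note the explicit real-rooted endpoint value $\tilde G_n^{(z)}(1)=\frac{2^n}{n!}\prod_{i=1}^{\lfloor n/2\rfloor}(z+i-\tfrac12)$, and note the degenerate behaviour at $x_0=0$. Your first route --- inverting the Markov--Stieltjes branches $y_i(z)$ for $z>-1/2$ --- correctly yields some of the $z$-roots, and the no-collision argument for those (a coincidence would give a multiple $x$-root of an orthogonal $G_n^{(\zeta)}$ with $\zeta>-1/2$) is sound.

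However, you have not proven the theorem. You explicitly flag ``the main obstacle'' yourself: for $x_0$ not small, $\lfloor n/2\rfloor$ real roots do \emph{not} all arise from $z>-1/2$, and in the strip $z<-1/2$ you have no Markov--Stieltjes control. Your proposal at that point becomes a list of candidate strategies (``one then checks\dots'', ``I would attack this through\dots'') rather than an argument. The crux --- ruling out that two real $z$-roots of $\tilde G_n(x_0,\cdot)$ collide and leave $\mathbb R$ as $x_0$ decreases from $1$ --- is left unresolved. This is exactly the difficulty the paper's Remark after Theorem~\ref{Gegenbauerz} warns against: the branch-inversion idea ``fails in this negative domain.'' The paper's substitute is Lemma~\ref{simpleroots}: starting from a hypothetical point $(x_0,\gamma_i(x_0))$ where $\tilde G_n$ and $\partial_x\tilde G_n$ (equivalently $\tilde G_{n-1}$) share a root, it iterates the three-term recurrence and the differentiation identity $\partial_x G_n(x,z)=2z\,G_{n-1}(x,z+1)$ to propagate the shared root to $\tilde G_{n+k}(x_0,\gamma_i(x_0)+j)$ for all $j,k\ge0$, reaching parameters $>-1/2$ where classical orthogonality makes a common root of consecutive degrees impossible. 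Combined with a Picard-type ODE continuation of the roots from $x=-1$, this yields global real-rootedness, monotonicity and divergence. None of your three suggested routes contains the key parameter-shift mechanism of Lemma~\ref{simpleroots}, and the suggested quadratic-transformation route simply relocates the problem: for $t_0=2x_0^2-1$ close to $1$ the relevant Jacobi zeros in $\alpha$ still land in the non-orthogonal range, so you face the same collision question there. Your ``Sturm-type interlacing induction'' idea points in the right direction --- the paper's Lemma~\ref{interlacing} is exactly that --- but without Lemma~\ref{simpleroots} to rule out collisions you cannot run the induction past a putative first collision point, so the gap remains.
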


\begin{remark}
One can wonder if there exists an easy proof. We could think of using Theorem~\ref{monoroots}, investigate  branches  $(y_i(z),z)$  in the plane and invert the functions  by monotonicity to get roots $z_i(y)$. However there are roots in $z$ that are arbitrarily negative when the degree increases, and such monotonicity properties fail in this negative domain. 
\end{remark}

\begin{corollary}\label{Gegenbauerzmod}
For fixed $x_0\in[-1,1]$, $\hat{G}_n(x_0,z)$ is a real-rooted polynomial in $z$ of degree $n$ (except for $x_0=0$). Furthermore, it has distinct simple roots. Its nonconstant roots in $z$ ($\lfloor n/2 \rfloor$ of them) are increasing for $x\in[-1,0[$, and decreasing for $x\in]0,1]$, with a convergence to $+\infty$ when $x \to 0$.
\end{corollary}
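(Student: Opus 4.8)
The plan is to deduce the statement from Theorem~\ref{Gegenbauerz} by exploiting the explicit factorizations, in the parameter $z$, of $G_n(x,z)$ and $\hat G_n(x,z)$ into an $x$-independent part (the "constant roots") and a part carrying the moving roots.

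First I would record that, in the displayed expansion of $\hat G_n^{(z)}(x)$ in Definition~\ref{defnfamily}, the product $\prod_{i=\lfloor n/2\rfloor}^{n-1}(z+1/2+i)$ does not depend on the summation index $k$, hence is common to every summand; factoring it out writes $\hat G_n^{(z)}(x)=\prod_{i=\lfloor n/2\rfloor}^{n-1}(z+1/2+i)\,\tilde{\hat G}_n^{(z)}(x)$ with $\tilde{\hat G}_n^{(z)}(x)=\sum_{k=0}^{\lfloor n/2\rfloor}(-1)^k\frac{\prod_{i=n-\lfloor n/2\rfloor}^{n-k-1}(z+i)}{k!(n-2k)!}x^{n-2k}2^{-2k}$, the polynomial of Definition~\ref{defngtilde}; only the $k=0$ summand contributes to the top power of $z$, so $\tilde{\hat G}_n^{(z)}(x)$ has degree exactly $\lfloor n/2\rfloor$ in $z$ with leading coefficient $x^n/n!\neq0$ when $x\neq0$. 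Doing the same with the expansion of $G_n^{(z)}(x)$ pulls out $\prod_{i=0}^{\lceil n/2\rceil-1}(z+i)$; since $n-\lfloor n/2\rfloor=\lceil n/2\rceil$ and $(2x)^{n-2k}=2^{n}2^{-2k}x^{n-2k}$, comparing the two cofactors term by term yields $\tilde G_n^{(z)}(x)=2^{n}\tilde{\hat G}_n^{(z)}(x)$. Thus $\hat G_n(x_0,z)$ and $G_n(x_0,z)$ have exactly the same nonconstant roots in $z$, namely the $\lfloor n/2\rfloor$ roots of $\tilde{\hat G}_n(x_0,z)$ for $x_0\neq0$, so the conclusions of Theorem~\ref{Gegenbauerz} about those roots transfer directly.

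Next I would read off that the constant roots of $\hat G_n$ are the $\lceil n/2\rceil$ numbers $z=-(i+1/2)$, $i=\lfloor n/2\rfloor,\dots,n-1$, i.e. the distinct negative half-integers $-(\lfloor n/2\rfloor+1/2),\dots,-(n-1/2)$. Together with the $\lfloor n/2\rfloor$ real nonconstant roots this gives $n$ real roots; since the leading coefficient of $\hat G_n(x_0,\cdot)$ in $z$ is $x_0^n/n!\neq0$ for $x_0\neq0$, this already proves that $\hat G_n(x_0,z)$ is real-rooted of degree $n$ for $x_0\in[-1,1]\setminus\{0\}$, with the stated monotonicity and the divergence of the nonconstant roots to $+\infty$ as $x\to0$ being exactly those of Theorem~\ref{Gegenbauerz}; the case $x_0=0$ is handled by inspection of the expansion ($\hat G_n(0,z)$ is a constant times $\prod_{i=n/2}^{n-1}(z+1/2+i)$ when $n$ is even, and $\equiv0$ when $n$ is odd).

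It remains to prove simplicity, i.e. that these $n$ roots are pairwise distinct. The $\lceil n/2\rceil$ constant roots are pairwise distinct, being distinct half-integers. A constant root is never a nonconstant root: evaluating the expansion of $\hat G_n$ at $x=1$ gives $\hat G_n(1,z)=\frac1{n!}\prod_{i=0}^{n-1}(z+1/2+i)$, whose nonconstant roots are $-1/2,-3/2,\dots,-(\lfloor n/2\rfloor-1/2)$, and since the nonconstant roots are decreasing in $x$ on $]0,1]$ and blow up to $+\infty$ as $x\to0^+$, they all lie in $[-(\lfloor n/2\rfloor-1/2),+\infty)$, which is disjoint from the constant roots (all $\leq-(\lfloor n/2\rfloor+1/2)$); the parity relation $\hat G_n(-x,z)=(-1)^n\hat G_n(x,z)$ gives the same on $[-1,0[$. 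The last point — that the $\lfloor n/2\rfloor$ nonconstant roots are pairwise distinct, equivalently that $\tilde{\hat G}_n(x_0,\cdot)$ has no repeated root for $x_0\neq0$ — is the one fact not literally contained in the statement of Theorem~\ref{Gegenbauerz}, and it is where I expect the real work to lie: monotonicity alone does not forbid two nonconstant roots from crossing at an interior value of $x_0$ (two branches whose $x$-derivatives have the same sign can still meet). I would extract it from the proof of Theorem~\ref{Gegenbauerz}, which for a family of this type most naturally proceeds by an interlacing/Sturm-chain induction in the degree; strict interlacing of $\tilde{\hat G}_n(x_0,\cdot)$ with $\tilde{\hat G}_{n-1}(x_0,\cdot)$ forces $\tilde{\hat G}_n(x_0,\cdot)$ to have simple roots, which closes the argument.
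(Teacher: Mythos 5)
Your proposal is correct and follows essentially the same route as the paper: factor out the $x$-independent part $\prod_{i=\lfloor n/2\rfloor}^{n-1}(z+1/2+i)$, observe that the $\lceil n/2\rceil$ constant roots all lie at or below $-(\lfloor n/2\rfloor+1/2)$ while the $\lfloor n/2\rfloor$ nonconstant roots $\gamma_i(x)$ lie at or above $1/2-\lfloor n/2\rfloor$ (the paper uses the $x=-1$ endpoint with monotone increase, you use $x=1$ with monotone decrease; the two are equivalent by the parity $\hat G_n(-x,z)=(-1)^n\hat G_n(x,z)$), so the two sets never meet, and import from the proof of Theorem~\ref{Gegenbauerz} that the $\gamma_i(x)$ themselves are pairwise distinct. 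Your observation that the latter fact is not in the \emph{statement} of Theorem~\ref{Gegenbauerz} but in its proof (Section~2 shows $\tilde G_n(x,\cdot)$ has simple roots) is a fair and careful reading of the paper, which relies on this silently.
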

These results are obtained (in Section~2) through new interlacing properties in both variables, which are intriguing dual versions of the classical Theorems~\ref{interlderiv}.

\begin{theorem}[Dual Gegenbauer interlacing]\label{dualinterlG} For  $x \in[-1,0[ \cup  ]0,1]$,  $G_n(x,z)$ interlaces $G_{n-1}(x,z)$ and $\partial_xG_n(x,z)$ in $z$.  At $x=0$, one of the two polynomials is identically zero, so depending on the conventions we could still say that interlacing happens. 
\end{theorem}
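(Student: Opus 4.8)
\emph{Proof strategy.} The engine is Gegenbauer's classical three-term recurrence in the degree,
\[
nG_n^{(z)}(x)=2x(n+z-1)G_{n-1}^{(z)}(x)-(n+2z-2)G_{n-2}^{(z)}(x),
\]
read as an identity in the variable $z$ with $x=x_0$ held fixed. By the parity $G_n^{(z)}(-x)=(-1)^nG_n^{(z)}(x)$ it is enough to treat $x_0\in(0,1]$; the value $x_0=0$ is the convention in the statement (for exactly one of $n$, $n-1$ the polynomial in $z$ vanishes identically), and at $x_0=1$ one has the explicit $G_n^{(z)}(1)=\tfrac1{n!}\prod_{i=0}^{n-1}(2z+i)$ and $\partial_xG_n^{(z)}(1)=\tfrac{2z}{(n-1)!}\prod_{i=2}^{n}(2z+i)$, whose real zeros one checks directly to interlace (non-strictly, with many coincidences). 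So the content is $x_0\in(0,1)$.

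First I would split off the zeros of $G_n(x_0,\cdot)$ that do not move with $x_0$: from Definition~\ref{defnfamily} the $z$-content of $G_n^{(z)}(x)$ is $\prod_{i=0}^{\lceil n/2\rceil-1}(z+i)$, so $G_n^{(z)}(x)=\big(\prod_{i=0}^{\lceil n/2\rceil-1}(z+i)\big)Q_n(x,z)$ with $\deg_zQ_n=\lfloor n/2\rfloor$, leading coefficient $(2x)^n/n!>0$, and it is the $\lfloor n/2\rfloor$ zeros of $Q_n$ that are the nonconstant ones. Dividing the recurrence by the relevant product of linear factors (using $n+2z-2=2(z+\tfrac n2-1)$) turns it, for even $n$, into the genuinely orthogonal-type recurrence $nQ_n=2x(n+z-1)Q_{n-1}-2Q_{n-2}$ with $\deg_zQ_{n-1}=\deg_zQ_{n-2}$, and, for odd $n$, into $n(z+\tfrac{n-1}2)Q_n=2x(n+z-1)Q_{n-1}-2(z+\tfrac n2-1)Q_{n-2}$. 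By Theorem~\ref{Gegenbauerz} each $Q_n(x_0,\cdot)$ is real-rooted and its zeros decrease as $x_0$ grows on $(0,1)$, so comparison with the zeros at $x_0=1$ (namely $-\tfrac12,-\tfrac32,\dots$ down to $-\tfrac{n-1}2$ for even $n$, to $-\tfrac n2+1$ for odd $n$) yields the estimate that will be needed: for $x_0\in(0,1)$ every zero of $Q_n(x_0,\cdot)$ is $>-\tfrac{n-1}2$. (The same recurrences also reprove real-rootedness, so the two steps could instead be run as one induction.)

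Next I would show by induction on $n$ that $Q_n(x_0,\cdot)$ interlaces $Q_{n-1}(x_0,\cdot)$, with the zeros of $Q_n$ on the outside. For even $n$ this is the classical interlacing lemma for $(\text{affine in }z)\cdot Q_{n-1}-(\text{positive constant})\cdot Q_{n-2}$: at the zeros of $Q_{n-1}$ the recurrence reads $nQ_n=-2Q_{n-2}$, which changes sign between consecutive zeros of $Q_{n-1}$ because $Q_{n-1}$, $Q_{n-2}$ interlace, so $Q_n$ has a zero in every gap, and the positive leading coefficient supplies the outer zero(s). For odd $n$ the same bookkeeping works once the estimate is invoked: every zero of $Q_{n-1}$ exceeds $-\tfrac n2+1>-\tfrac{n-1}2$, hence both $z+\tfrac{n-1}2$ and $z+\tfrac n2-1$ are positive there, so $nQ_n$ again alternates in sign across the zeros of $Q_{n-1}$ and the argument closes. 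Multiplying back by the common factor — interlacing is preserved under multiplication by a common real-rooted polynomial, and for odd $n$ the extra factor $z+\tfrac{n-1}2$ contributes a zero lying below all zeros of $Q_{n-1}$ — gives that $G_n(x_0,\cdot)$ interlaces $G_{n-1}(x_0,\cdot)$.

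For the derivative I would pass through the classical structure relation
\[
(1-x^2)\,\partial_xG_n^{(z)}(x)=(2z+n-1)G_{n-1}^{(z)}(x)-nx\,G_n^{(z)}(x).
\]
At $x_0\in(0,1)$ this rewrites every pencil $\lambda G_n(x_0,\cdot)+\mu\,\partial_xG_n(x_0,\cdot)$ as $aG_n(x_0,\cdot)+b\,(z+\tfrac{n-1}2)G_{n-1}(x_0,\cdot)$ for suitable real $a$, $b$. Now $(z+\tfrac{n-1}2)G_{n-1}(x_0,\cdot)$ carries the zeros of $G_{n-1}(x_0,\cdot)$ — all $\ge-\tfrac n2+1>-\tfrac{n-1}2$ — together with the extra zero $-\tfrac{n-1}2$, which therefore lies below all of them and not above the smallest zero of $G_n(x_0,\cdot)$; combined with the interlacing of $G_n$ and $G_{n-1}$ just obtained, this makes $G_n(x_0,\cdot)$ and $(z+\tfrac{n-1}2)G_{n-1}(x_0,\cdot)$ interlace, so every linear combination of them — in particular every combination of $G_n(x_0,\cdot)$ and $\partial_xG_n(x_0,\cdot)$ — is real-rooted, which is exactly the interlacing of $G_n(x_0,\cdot)$ with $\partial_xG_n(x_0,\cdot)$. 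The step I expect to be the real obstacle is the odd-$n$ case and its reliance on the \emph{sharp} lower bound $-\tfrac{n-1}2$: the crude bound $-\tfrac n2$, which comes for free from the identity $G_n^{(-n/2)}(x)=(-1)^n$, would let the zeros of $Q_{n-1}$ straddle the point $-\tfrac n2+1$ where $z+\tfrac n2-1$ changes sign and so wreck the sign alternation, forcing one to feed in the monotonicity of Theorem~\ref{Gegenbauerz}; and one must keep track that this bound is strict only on the open interval $(0,1)$, with equalities and coincident zeros at $x_0=\pm1$ — which, with $x_0=0$, is why those endpoints are treated by hand.
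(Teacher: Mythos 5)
Your proof is correct, but it takes a genuinely different route from the paper's. For the interlacing of the nonconstant factors $\tilde G_n$ (your $Q_n$) with $\tilde G_{n-1}$, the paper runs a local-to-global continuation: Taylor-expanding the roots at $x=-1$ to verify interlacing locally (via Lemma~\ref{derivdeepG}), and extending it by continuity using the ``no common root'' Lemma~\ref{simpleroots}, itself proved by driving a putative shared root up through the recurrences until it lands among genuinely orthogonal polynomials; the derivative statement is handled by a parallel Taylor computation. You instead import Theorem~\ref{Gegenbauerz} wholesale, extract from the monotonicity the quantitative lower bound $\gamma_i^m(x_0) > \tfrac12 - \lfloor m/2\rfloor$ on $(0,1)$ by comparison with the explicit roots at $x_0 = 1$, and feed that bound into the classical Sturm-type sign-alternation argument built from the three-term recurrence in $n$; the bound is exactly what keeps the stray odd-$n$ factors $z+\tfrac{n-1}{2}$ and $z+\tfrac n2-1$ of constant sign at the relevant roots. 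For $\partial_x G_n$ you exchange the pencil via the structure relation and close with Hermite--Kakeya--Obreschkoff rather than another Taylor expansion. This is cleaner and more modular, and it isolates exactly where the lower bound is spent. Two caveats, though. First, you do not actually bypass the continuation machinery: the strict monotonicity of $\gamma_i$ on the open interval, which gives $\gamma_i(x_0) > \gamma_i(1)$, is precisely what the paper's ODE argument produces, so your proof reorganizes the dependency rather than removing it. Second, your parenthetical remark that the recurrences ``also reprove real-rootedness, so the two steps could be run as one induction'' is too quick: such an induction would have to carry the lower bound $\gamma_i^n > \tfrac12 - \lfloor n/2\rfloor$ along, and for even $n$ the smallest root of $Q_n$ lies below all roots of $Q_{n-1}$, so the interlacing step alone does not pin it above $-\tfrac{n-1}{2}$; evaluating the recurrence at $z = -\tfrac{n-1}{2}$ produces two like-signed terms being subtracted, which is inconclusive without an extra input.
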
 
\begin{remark}
The same properties for the Laguerre family can be deduced from the orthogonality of Charlier polynomials. 
\end{remark}

\begin{corollary}\label{dualinterlGmodified} For  $x \in[-1,0[ \cup  ]0,1]$,  $\hat{G}_n(x,z)$ interlaces $\hat{G}_{n-1}(x,z)$ and $\partial_x\hat{G}_n(x,z)$ in $z$.  Again, we can extend it to $x=0$ with the right conventions. 
\end{corollary}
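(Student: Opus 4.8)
The plan is to deduce the statement from the Gegenbauer version Theorem~\ref{dualinterlG} by factoring out the part of $\hat G_n$ that does not involve $x$ and then doing the bookkeeping of the constant roots in $z$. By the parity relation $\hat G_n(-x,z)=(-1)^n\hat G_n(x,z)$ the roots in $z$ at $x$ and at $-x$ coincide, so it is enough to treat $x\in\,]0,1]$; the value $x=0$ is covered by the conventions already invoked for Theorem~\ref{dualinterlG}. Collecting in the explicit expansions of Definition~\ref{defnfamily} the factors that do not depend on $x$, one gets $G_n(x,z)=g_n(z)\,\tilde G_n(x,z)$ and $\hat G_n(x,z)=2^{-n}\,\hat g_n(z)\,\tilde G_n(x,z)$, where $g_n(z)=\prod_{i=0}^{\lceil n/2\rceil-1}(z+i)$, $\hat g_n(z)=\prod_{i=\lfloor n/2\rfloor}^{n-1}(z+\tfrac12+i)$, and $\tilde G_n$ is the reduced polynomial of Definition~\ref{defngtilde}, of degree $\lfloor n/2\rfloor$ in $z$. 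The crucial point is that one and the same $\tilde G_n$ occurs in both factorizations, because the normalizing factor relating $\hat G_n$ to $G_n$ depends on $z$ but not on $x$.

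For the interlacing of $\hat G_n$ with $\partial_x\hat G_n$ in $z$: since $\hat g_n$ does not involve $x$, we have $\partial_x\hat G_n=2^{-n}\hat g_n(z)\,\partial_x\tilde G_n$, and likewise $\partial_xG_n=g_n(z)\,\partial_x\tilde G_n$. By Theorem~\ref{dualinterlG} the polynomials $g_n(z)\tilde G_n$ and $g_n(z)\partial_x\tilde G_n$ interlace in $z$. Here I use the elementary fact that a pair of real polynomials interlaces if and only if it still interlaces after multiplying, or cancelling, a common real-rooted factor; this follows from the characterization that $p$ and $q$ interlace iff every real combination $\lambda p+\mu q$ is real-rooted, together with the fact that a product of real polynomials is real-rooted iff each factor is. Cancelling the common simple-rooted factor $g_n$ then gives that $\tilde G_n$ and $\partial_x\tilde G_n$ interlace in $z$, and re-multiplying by the real-rooted $\hat g_n$ gives the claim for $\hat G_n$.

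The interlacing of $\hat G_n$ with $\hat G_{n-1}$ is the main step, because now the $x$-independent factors $\hat g_n$ and $\hat g_{n-1}$ are different, so I split on the parity of $n$. First, evaluating at $x=1$ gives $\hat G_n^{(z)}(1)=P_n^{(z-1/2,z-1/2)}(1)=\binom{n+z-1/2}{n}$, with roots in $z$ at $-\tfrac12,-\tfrac32,\dots,-(n-\tfrac12)$; removing from this list the $\lceil n/2\rceil$ fixed roots of $\hat g_n$, namely $-(\lfloor n/2\rfloor+\tfrac12),\dots,-(n-\tfrac12)$, identifies the nonconstant roots at $x=1$ as $-\tfrac12,\dots,-(\lfloor n/2\rfloor-\tfrac12)$. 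By the monotonicity in Corollary~\ref{Gegenbauerzmod} (the nonconstant roots are decreasing on $]0,1]$), every nonconstant root of $\hat G_n$ for $x\in\,]0,1]$ is $\ge-(\lfloor n/2\rfloor-\tfrac12)$, hence lies strictly above all fixed roots of $\hat G_n$. Next I show that $\tilde G_n$ interlaces $\tilde G_{n-1}$ in $z$: when $n$ is even one has $g_n=g_{n-1}$, so cancelling this common factor from the interlacing of $G_n$ and $G_{n-1}$ (Theorem~\ref{dualinterlG}) does it; when $n=2m+1$ one has $g_n=g_{n-1}\,(z+m)$, so the same cancellation gives that $(z+m)\tilde G_n$ interlaces $\tilde G_{n-1}$, and since by the $x=1$ evaluation $-m$ lies strictly below every nonconstant root of $\tilde G_n$ for $x\in\,]0,1]$, it is the bottom root of $(z+m)\tilde G_n$ and the interlacing inequalities for the top $\lfloor n/2\rfloor$ slots are exactly those asserting $\tilde G_n$ interlaces $\tilde G_{n-1}$. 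Finally the fixed roots of $\hat G_n$ and of $\hat G_{n-1}$, being the arithmetic progressions $\{-(i+\tfrac12):\lfloor n/2\rfloor\le i\le n-1\}$ and $\{-(i+\tfrac12):\lfloor(n-1)/2\rfloor\le i\le n-2\}$, share all but one element and their sorted lists alternate in either parity. Assembling the pieces: the roots of $\hat G_n$, resp.\ of $\hat G_{n-1}$, in $z$ form, read from the top, a block of nonconstant roots followed by a block of fixed roots, the two blocks being strictly separated; combining the nonconstant interlacing, this separation, and the fixed-root interlacing gives the full alternation $p_1\ge q_1\ge p_2\ge\cdots$, with equalities occurring exactly at shared fixed roots, which is the asserted interlacing. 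The extension to $x=0$ uses the stated conventions, and simplicity of the roots of $\hat G_n$ is already part of Corollary~\ref{Gegenbauerzmod}.

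The part I expect to be the main obstacle is the bookkeeping in the previous paragraph: keeping track of which fixed roots are common to $\hat G_n$ and $\hat G_{n-1}$, matching up the two index ranges across the parities of $n$, and above all checking the boundary inequality that the smallest nonconstant root never descends to the level of the largest fixed root --- which is precisely where the $x=1$ evaluation and Corollary~\ref{Gegenbauerzmod} are needed. Everything else is either a direct transcription of Theorem~\ref{dualinterlG} or the standard behaviour of interlacing under a common real-rooted factor.
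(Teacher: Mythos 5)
Your proof is correct and follows essentially the same route as the paper: both factor $\hat G_n$ as the $x$-independent product $\hat g_n(z)$ times the reduced polynomial $\tilde G_n$, use the lower bound on the nonconstant roots (from Corollary~\ref{Gegenbauerzmod} and the value at $x=\pm1$) to place the nonconstant block strictly above the fixed block, and then perform the same parity-dependent bookkeeping of which fixed roots are shared to assemble the full alternation. The one variation is that you recover the interlacing of $\tilde G_n$ with $\tilde G_{n-1}$ (and with $\partial_x\tilde G_n$) from the stated Theorem~\ref{dualinterlG} by Hermite--Kakeya--Obreschkoff cancellation of the common real-rooted factor $g_{n-1}$ (plus the $(z+m)$ term when $n$ is odd), whereas the paper has this as an internal lemma proved en route to Theorem~\ref{dualinterlG}; since that internal lemma is not visible from the theorem statement, your cancellation step is a reasonable and correct substitute, and the conclusion is the same.
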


It then follows, grouping results, that  $P_n(x,z)$   are real-rooted in $z$ for $x$ in the support of the underlying measure of orthogonality. This seems like a sharp result, in the sense that numerically we can find large degree polynomials with non-real roots in $z$ if $x$ is outside these intervals of orthogonality.

We show in Section~3 how to use such results to analyze $P_n(x,z)$ as bivariate polynomials and consider new derived families of polynomials. More specifically, we prove that when we differentiate these orthogonal polynomials with respect to $z$ and consider them as polynomials in $x$, then they are realrooted in $x$. Such families of polynomials (derivatives with respect to the parameter) seem to have many nice properties similar to their corresponding orthogonal polynomials from which they are derived and yet have never been studied.\begin{theorem}\label{LaguerreD} Consider $z> -1$.
The polynomials $\partial_{z}^kL_n(x,z)$ for all $k \leq n$ are real-rooted in $x$ of degree $n-k$. Additionally  their roots are  increasing with $z$ and they form a strict interlacing family of decreasing degree, in the sense that $\partial_{z}^{k+1}L_n(x,z)$ interlaces strictly $\partial_{z}^kL_n(x,z)$ in $x$. 
\end{theorem}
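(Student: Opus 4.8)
The plan is to exploit the bivariate structure and the realrootedness in $z$ established in Corollary~\ref{Laguerrez}, together with the classical monotonicity of Theorem~\ref{monoroots}, to control what happens under $\partial_z$. The starting point is the observation that $L_n(x,z)$, as a polynomial in the two variables, has degree $n$ in each; for every fixed $x_0 \in [0,\infty[$ it is realrooted in $z$ (Corollary~\ref{Laguerrez}), and for every fixed $z > -1$ it is realrooted in $x$ with simple roots interlacing those of $\partial_x L_n$ (Theorems~\ref{interlderiv}). The first step is to pass from realrootedness in $z$ to a genuine interlacing statement in $z$: I would show that for $x_0$ in the open support, $L_n(x_0,z)$ interlaces $\partial_z L_n(x_0,z)$ in $z$ (this is immediate from realrootedness of $L_n(x_0,\cdot)$ by Rolle), and more usefully that the whole tower $\partial_z^k L_n(x_0,z)$ is a strictly interlacing sequence in $z$ of decreasing degree, obtained by iterating Rolle on a polynomial all of whose roots in $z$ are simple. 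The simplicity of the roots of $L_n(x_0,\cdot)$ for $x_0>0$ comes from Corollary~\ref{Laguerrez}; the boundary case $x_0=0$ is handled directly from the explicit formula in Definition~\ref{defnfamily}, where $L_n^{(z)}(0)=\prod_{j=1}^n(z+j)/n!$ has the simple roots $-1,\dots,-n$.

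Next I would set up the core argument, which is a continuity/degree-counting argument in the variable $x$. Fix $k \le n$. The polynomial $\partial_z^k L_n(x,z)$ has degree $n-k$ in $z$ and degree $n$ in $x$; but in fact I claim it has degree exactly $n-k$ in $x$ as well. This follows by inspecting the explicit sum $L_n^{(z)}(x) = \sum_k \frac{(-1)^k \prod_{j=k+1}^n (z+j)}{k!(n-k)!} x^k$: the coefficient of $x^k$ is a polynomial in $z$ of degree $n-k$, so $\partial_z^k$ kills the coefficients of $x^0,\dots,x^{k-1}$ and leaves, for each $j \ge k$, a polynomial in $z$ of degree $n-k-(j-k)=n-j$ multiplying $x^j$; hence the $x$-degree drops to... wait, the top term $x^n$ has constant coefficient in $z$, so $\partial_z^k$ kills it for $k \ge 1$. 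Re-examining: $\partial_z^k L_n(x,z)$ has $x$-degree $n-k$, with the $x^{n-k}$-coefficient being (up to a nonzero constant) the leading $z$-derivative of $\prod_{j=n-k+1}^n(z+j)$, a nonzero constant. So both degrees are $n-k$, and for $x_0=0$ its $z$-roots are exactly $-1,\dots,-(n-k)$ by the same product formula.

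The heart of the proof is then to show $\partial_z^k L_n(x,z)$ is realrooted in $x$ for each fixed $z>-1$ and $k\le n$. I would argue by a deformation/limiting argument: at $x$-values that are roots of $L_n(\cdot,z)$ we have information, and I want to propagate realrootedness through $\partial_z$ using the hyperbolicity of the bivariate polynomial. Concretely, the key is that $L_n(x,z)$, being realrooted in $z$ for each $x$ in the support and realrooted in $x$ for each $z > -1$, is a \emph{real stable} (in fact hyperbolic) bivariate polynomial on the relevant region; then $\partial_z$ preserves real stability, so $\partial_z^k L_n(x,z)$ is again real stable, hence realrooted in $x$ for each fixed real $z$ in the appropriate range, with degree $n-k$. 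The monotonicity of the $x$-roots in $z$ then follows from Theorem~\ref{monoroots} applied in the limit — or, more robustly, directly from real stability: if $F(x,z)$ is real stable and has full $x$-degree, its $x$-roots $x_i(z)$ are monotone in $z$ with sign determined by the sign of the mixed leading coefficient, and this sign is inherited by $\partial_z F$. Finally, the strict interlacing of $\partial_z^{k+1}L_n$ with $\partial_z^k L_n$ in $x$ follows from the general fact that for a real stable bivariate $F$, $F$ and $\partial_z F$ have interlacing $x$-roots for each fixed $z$; strictness follows because at generic $z>-1$ the roots stay simple, and one upgrades to all $z>-1$ by the fact that interlacing (possibly non-strict) persists and coincidences of roots would contradict simplicity of the $z=0$ specialization combined with the real-analytic dependence of the roots on $z$.

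The main obstacle I anticipate is justifying the real-stability (hyperbolicity) input rigorously: knowing $L_n(x,z)$ is realrooted in $x$ for each fixed $z$ in an interval, and realrooted in $z$ for each fixed $x$ in an interval, does \emph{not} by itself give real stability of the bivariate polynomial on $\mathbb{C}^2$ in the Borcea–Branden sense — that would require realrootedness of $z \mapsto L_n(x,z)$ for \emph{all} real $x$, not just $x$ in the support, and the paper's own remarks note this fails for large $n$ outside the support. So the honest route is more hands-on: rather than invoking a black-box stability-preservation theorem, I would prove realrootedness of $\partial_z^k L_n(x,z)$ in $x$ by an explicit induction on $k$, using at each stage a Rolle-type argument in $x$ combined with a sign analysis at the endpoint $x=0$ (where all roots are explicit) and at $x=+\infty$ (leading coefficient), to rule out the escape of roots into the complex plane; the interlacing with $\partial_z^{k+1}$ is then built into the induction via Obreschkoff-type equivalences. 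This endpoint bookkeeping, tracking the sign of $\partial_z^k L_n(0,z) = \frac{d^k}{dz^k}\!\big(\prod_{j=1}^n (z+j)\big)/n!$ and of the leading $x$-coefficient across the range $z>-1$, is the delicate part that the routine calculations will have to nail down.
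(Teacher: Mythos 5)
Your proposal correctly assembles the ingredients --- Corollary~\ref{Laguerrez}, the monotonicity of Theorem~\ref{monoroots}, a $k$-induction, and the degree-$(n-k)$ bookkeeping in $x$ --- and you are right to abandon the Borcea--Br\"and\'en route, since $L_n(x,z)$ is not real stable on all of $\mathbb{R}^2$. But the fallback you sketch is missing the two specific mechanisms that drive the paper's induction, and without them the argument does not close.

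The first missing step is how one actually produces the $n-k-1$ real roots of $\partial_z^{k+1}L_n(\cdot,z)$ and their strict interlacing with those of $\partial_z^k L_n(\cdot,z)$. You suggest ``a Rolle-type argument in $x$ combined with a sign analysis at the endpoints,'' but Rolle in $x$ yields roots of $\partial_x(\partial_z^{k+1}L_n)$, the wrong operator, and an endpoint analysis alone cannot produce interior interlacing. The correct mechanism is to differentiate in $z$ the relation $\partial_z^k L_n(x_i(z),z)=0$ at each simple root $x_i(z)$:
\[
\partial_z^{k+1}L_n(x_i(z),z) \;=\; -\,\partial_x\partial_z^{k}L_n(x_i(z),z)\,\frac{d x_i}{dz}.
\]
Since $\frac{dx_i}{dz}>0$ by the inductive hypothesis and $\partial_x\partial_z^{k}L_n$ alternates sign across the simple roots $x_i$, the left side alternates in sign with $i$, and the intermediate value theorem together with your degree count locates exactly the $n-k-1$ roots of $\partial_z^{k+1}L_n$ strictly between consecutive $x_i$. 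Note this uses the inductive \emph{monotonicity} of the roots, not just their realrootedness, so your induction hypothesis cannot be restricted to realrootedness and interlacing.

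The second and larger gap is the monotonicity of the new roots $\tilde{x}_i(z)$ of $\partial_z^{k+1}L_n$, which the next inductive step requires and for which you give no working mechanism: ``Theorem~\ref{monoroots} in the limit'' is not a proof, since that theorem concerns $L_n$ itself, not its $z$-derivatives, and nothing passes it to $\partial_z^{k+1}L_n$. This is precisely where Corollary~\ref{Laguerrez} does real work. By interlacing, $\tilde{x}_i(z)\in[0,\infty)$, so $z\mapsto\partial_z^k L_n(\tilde{x}_i(z),z)$ is the $k$th derivative of a realrooted polynomial with simple roots, and Laguerre's inequality gives $\bigl[\partial_z^{k+2}L_n\,\partial_z^{k}L_n-(\partial_z^{k+1}L_n)^2\bigr](\tilde{x}_i(z),z)<0$, i.e.\ $\partial_z\bigl(\partial_z^{k+1}L_n/\partial_z^{k}L_n\bigr)(\tilde{x}_i(z),z)<0$. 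Combined with the partial-fraction computation $\partial_x\bigl(\partial_z^{k+1}L_n/\partial_z^{k}L_n\bigr)(\tilde{x}_i(z),z)=\sum_j \frac{dx_j}{dz}(\tilde{x}_i-x_j)^{-2}>0$, implicit differentiation of $\bigl(\partial_z^{k+1}L_n/\partial_z^{k}L_n\bigr)(\tilde{x}_i(z),z)=0$ yields $\frac{d\tilde{x}_i}{dz}>0$. This is the crux, and it is absent from your proposal.
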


The analogue holds for Gegenbauer families:
\begin{theorem}\label{GegenbauerD}Consider  $z >-1/2$.
The polynomials  $\partial_{z}^k\hat{G}_n(x,z)$ for all $k \leq n$ are real-rooted in $x$ of degree $n$. The polynomials $\partial_{z}^{k}\hat{G}_n(x,z)$ have $\lfloor n/2 \rfloor$ positive roots for $k \leq n- \lfloor n/2 \rfloor$, and for larger $k$, $n-k $ positive roots (for $k \geq  n- \lfloor n/2 \rfloor$, zero roots add two by two when $k$ increases by one).
Additionally  the positive roots are  decreasing with $z$,  their symmetric negative counterpart  increasing with $z$, and the positive roots of $\partial_{z}^{k+1}\hat{G}_n(x,z)$ interlace strictly the positive roots of $\partial_{z}^k\hat{G}_n(x,z)$ (the same holds for the negative ones by symmetry). 
\end{theorem}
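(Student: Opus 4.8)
The plan is to pass to the squared variable $w=x^2$ and induct on the order of the $z$-derivative, using a monotone-perturbation argument.

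Since every power of $x$ occurring in $\hat G_n^{(z)}(x)$ has the parity of $n$, write $\hat G_n(x,z)=x^{\varepsilon}Q_n(x^2,z)$ with $\varepsilon=n\bmod 2$ and $Q_n(w,z)$ a polynomial of degree $\lfloor n/2\rfloor$ in $w$ and $n$ in $z$. As $x^{\varepsilon}$ and $x^2$ are free of $z$ we have $\partial_z^k\hat G_n(x,z)=x^{\varepsilon}(\partial_z^kQ_n)(x^2,z)$, so it is enough to show that for all $z>-1/2$ and all $k\le n$ the polynomial $\partial_z^kQ_n(\cdot,z)$ is real-rooted in $w$ with all roots in $[0,1)$, with the positive roots strictly decreasing in $z$, consecutive derivatives (in $z$) interlacing in $w$, and the root at $0$ of controlled order $m_0=\max(0,\,k-\lceil n/2\rceil)$. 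Two facts feed the induction: for $z>-1/2$, the real-rootedness of $\hat G_n(\cdot,z)$ on $[-1,1]$ gives that $Q_n(\cdot,z)$ is real-rooted in $w$ with its $\lfloor n/2\rfloor$ simple roots in $(0,1)$, decreasing strictly in $z$ by Theorem~\ref{monoroots} (base case $k=0$); and for $w\in(0,1]$, Corollary~\ref{Gegenbauerzmod} gives that $Q_n(w,\cdot)$, hence every $\partial_z^jQ_n(w,\cdot)$, is real-rooted in $z$ with simple roots, with the nonconstant ones tending to $+\infty$ as $w\to0^+$.

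For the step $k\to k+1$: by hypothesis, for $z>-1/2$ and small $h>0$, the polynomials $\partial_z^kQ_n(\cdot,z)$ and $\partial_z^kQ_n(\cdot,z+h)$ share the factor $w^{m_0}$ and have each positive root of the latter slightly to the left of that of the former; two real-rooted polynomials with roots so arranged interlace. Hence $\partial_z^kQ_n(\cdot,z+h)-\partial_z^kQ_n(\cdot,z)$ is real-rooted (difference of interlacing polynomials), has roots in $[0,1)$, and interlaces $\partial_z^kQ_n(\cdot,z)$; dividing by $h$ and letting $h\to0$ — the $w$-degree is stable because the coefficient $c_0(z)$ of $x^n$ in $\hat G_n$ has degree exactly $n$ in $z$ with all roots $\le-1/2$, so $c_0^{(k)}(z)>0$ for $z>-1/2$, $k\le n$ — gives that $\partial_z^{k+1}Q_n(\cdot,z)$ is real-rooted in $w$, with roots in $[0,1)$, and interlaces $\partial_z^kQ_n(\cdot,z)$. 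To restore the monotonicity, note that on the open rectangle $\{0<w<1,\ z>-1/2\}$ the zero set of $\partial_z^{k+1}Q_n$ is at once a union of graphs over $w$ and a union of graphs over $z$ (from the two real-rootedness statements, using simplicity of the roots to exclude branching), hence a union of monotone arcs; they are decreasing, as when $k=0$, because these $w$-roots collapse to $0$ as $z\to+\infty$ whereas no arc can reach $w=1$ for $z>-1/2$: indeed $\partial_z^{k+1}Q_n(1,\cdot)$ is a $z$-derivative of the Jacobi value $Q_n(1,z)=\hat G_n(1,z)$, a polynomial whose roots are the half-integers $\le-1/2$, so it does not vanish for $z>-1/2$. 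Finally the order $m_0$ at $w=0$ and the number $\lfloor n/2\rfloor-m_0$ of positive roots are read off from the $z$-degrees $\lceil n/2\rceil+m$ of the coefficients of $w^m$ in $Q_n$.

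Translating back via $\partial_z^k\hat G_n(x,z)=x^{\varepsilon}(\partial_z^kQ_n)(x^2,z)$ and the parity of $\hat G_n$ yields exactly the stated facts: degree $n$ in $x$; a zero at $x=0$ of multiplicity $\varepsilon+2m_0$; $\lfloor n/2\rfloor-m_0$ positive roots, equal to $\lfloor n/2\rfloor$ for $k\le n-\lfloor n/2\rfloor$ and to $n-k$ for $k\ge n-\lfloor n/2\rfloor$, with zeros appearing two at a time at $x=0$ past that threshold; positive roots decreasing and negative roots increasing in $z$, by the symmetry of $\hat G_n$ in $x$; and the strict interlacing in $x$ of $\partial_z^{k+1}\hat G_n$ with $\partial_z^k\hat G_n$ on the positive (and, by symmetry, negative) roots, from the strict interlacing of the $Q_n$-derivatives in $w$. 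The step I expect to be the real work is precisely this recovery of monotonicity at each level — promoting ``real-rooted in $w$'' to ``positive $w$-roots strictly monotone in $z$ on all of $(-1/2,\infty)$'', which is what makes the perturbation legal at the next stage — since it forces one to fuse the two one-variable real-rootedness statements into the monotone-arc picture, fix the common direction from the behaviour as $z\to+\infty$, and handle the degenerate edges at $w=0$ and $w=1$; the ancillary facts (difference of interlacing polynomials is real-rooted; limit of real-rooted polynomials of bounded degree is real-rooted) are classical.
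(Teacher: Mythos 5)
Your proposal is a genuinely different route to the same theorem, though it rests on the same foundation (the realrootedness in $z$ with simple roots established in Corollary~\ref{Gegenbauerzmod}, and the monotonicity from Theorem~\ref{monoroots} at the base level $k=0$). The paper proceeds directly in the variable $x$ and, at each inductive step, (i) finds roots of $\partial_z^{k+1}\hat G_n$ by tracking sign changes of $\partial_z^{k+1}\hat G_n(y_i(z),z)=-\partial_x\partial_z^k\hat G_n(y_i(z),z)\,\frac{dy_i}{dz}$ across the known roots $y_i$ plus a separate sign comparison near $x=0$, and (ii) re-establishes monotonicity via a partial-fraction expansion of $\partial_z^{k+1}\hat G_n/\partial_z^k\hat G_n$ together with Laguerre's inequality. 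You instead pass to $w=x^2$, obtain realrootedness of the $(k+1)$-st derivative by a perturbation/difference-of-interlacing-polynomials argument (a Hermite--Kakeya/Obreschkoff-type fact, not invoked in the paper), and recover monotonicity geometrically: the zero set of $\partial_z^{k+1}Q_n$ in the open rectangle is simultaneously a union of graphs over $w$ and over $z$ (by the two realrootedness statements with simple roots), so it consists of strictly monotone arcs, and the direction is pinned down by the collapse of roots to $0$ as $z\to\infty$ together with the nonvanishing of $\partial_z^{k+1}Q_n(1,\cdot)$ on $z>-1/2$. This ``monotone arc'' mechanism is an attractive replacement for Laguerre's inequality; both approaches draw on the same dual realrootedness but package it differently.

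There are two places where the proposal is lighter than it should be, both fixable and both things the paper's proof actually spends effort on. First, after forming the interlacing difference $\partial_z^kQ_n(\cdot,z+h)-\partial_z^kQ_n(\cdot,z)$, interlacing only accounts for $\nu-1$ of its $\nu$ non-trivial roots (those falling in the gaps between consecutive positive roots of $\partial_z^kQ_n(\cdot,z)$); the one remaining root is not automatically in $[0,1)$, and in general the location of this extra root depends on the relation between the constant and leading coefficients. To place it in $(0,p_\nu)$ one needs a sign check at $w=0$, using that the coefficient of $w^{m_0}$ in $Q_n$ is $(-1)^{\lfloor n/2\rfloor-m_0}$ times a polynomial with all roots $\le -1/2$, so that $(\partial_z^{k}a_{m_0})(z+h)-(\partial_z^{k}a_{m_0})(z)$ has the same sign as $(-1)^{\lfloor n/2\rfloor-m_0}$ for $z>-1/2$ (or, when $\deg a_{m_0}=k$, vanishes and contributes the new factor of $w$); this is precisely the role of the even/odd $n$ sign bookkeeping in the paper. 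Second, the $h\to0$ limit a priori yields only non-strict interlacing in $w$; strictness must be deduced from the $z$-simplicity: a common positive $w$-root of $\partial_z^{k}Q_n(\cdot,z_0)$ and $\partial_z^{k+1}Q_n(\cdot,z_0)$ would give a double root of $\partial_z^kQ_n(w_0,\cdot)$ at $z_0$, contradicting Corollary~\ref{Gegenbauerzmod}. You invoke simplicity for the monotone-arc step but should also invoke it here. With these two points supplied, the argument is complete and sound.
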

\begin{remark}
One can wonder why we chose $\hat{G}_n$ and not $G_n$. In fact we can adapt the proof of the above result to $G_n$ but only for $z>0$. Indeed the factor term $z$ that we get rid of when  considering modified Gegenbauer polynomials can become negative and it disrupts parts of the argument. Somehow, the modified Gegenbauer polynomials seem more adequate as bivariate polynomials, as they also have simple roots in $z$. 

 \end{remark}

\begin{remark}
 Let us remark that taking $k \leq n$ is not restrictive, as  $\partial_z^kL_n(x,z)$ and  $\partial_z^k\hat{G}_n(x,z)$ are identically zero if $k>n$. 
\end{remark}

We can wonder if such a phenomenon is  general and applies to other families of hypergeometric functions depending on a parameter. It seems plausible, as the proofs rely on very general monotonicity properties of the polynomials. We should also point out that this bivariate point of view on such polynomials came up while studying a continuous extension of polynomial convolutions considered in \cite{GM} themselves extending the ones from \cite{MSS}. Gegenbauer and generalized Laguerre polynomials are extremal polynomials in finite free probability (see Proposition 7.8 in \cite{G} for the link between rectangular finite free probability and Laguerre polynomials and Corollary 5.6 in \cite{GM} for the link with Gegenbauer polynomials), and differentiating with respect to $z$ has a meaning in the framework of finite free probability. But we chose to keep such a connexion for a future paper, as  these results themselves are relevant in the theory of orthogonal polynomials independently. 

\section{Realrootedness in $z$ of Gegenbauer polynomials}
The goal of this section is to prove Theorem~\ref{Gegenbauerz} and its corollaries. The following strategy could as well apply to Laguerre polynomials, but  it leads to already well known results. 
  
\subsection{Set up and Initialization: monotonicity and realrootedness at the extreme points of orthogonality}

\subsubsection{Set up and notations}
Let us treat separately the case $x=0$. Using the expression given in \ref{defnfamily}, we get:
\begin{align*}
G_n(0,z)&=\frac{(-1)^{n/2}}{(n/2)!} \prod_{j=0}^{n/2-1}(z+j) \text{   \;  when $n$ is even and}   &     G_n(0,z)&=0 \text{    \; when $n$ is odd}.
\end{align*}

Let us recall the explicit expansion of our family of polynomials at the extreme points using the hypergeometric form:

\begin{align*}
G_n^{(z)}(x)&:= \frac{\prod_{l=0}^{n-1} (2z+l)}{n!}  \vphantom{F}_2 F_1 (-n,2z+n;z+1/2; (1-x)/2)\\
&= \prod_{l=0}^{n-1} (2z+l) \sum_{k=0}^n (-1)^k \frac{1}{k!(n-k)!}\frac{ \prod_{i=0}^{k-1}(2z+n+i)}{ \prod_{i=0}^{k-1}(z+1/2+i)  2^k}(1-x)^k \\
&=\sum_{k=0}^n (-1)^k \frac{{2^{n} \binom{n}{k}}}{n!}\frac{ \prod_{i=0}^{n+k-1}(z+i/2)}{ \prod_{i=0}^{k-1}(z+(2i+1)/2) }(1-x)^k \\
&= \sum_{k=0}^n \frac{{2^{n} \binom{n}{k}}}{n!} \prod_{i=2k}^{n+k-1}(z+i/2) \prod_{i=0}^{k-1}(z+i) (x-1)^k  .
\end{align*}

As the Gegenbauer polynomials are even or odd in $x$, that is   $G_n(-x,z)= (-1)^nG_n(x,z)$, we will restrict to proving our results for $x\in[-1,0]$, and extend them afterwards by symmetry. 
We have:
\begin{equation} \label{decomposition}
G_n(x,z)=(-1)^n G_n(-x,z)= \sum_{k=0}^n\frac{{2^{n} \binom{n}{k}}}{n!} (-1)^{n+k}\prod_{i=2k}^{n+k-1}(z+i/2) \prod_{i=0}^{k-1}(z+i) (x+1)^k .
\end{equation}
So that $G_n(-1,z)=  (-1)^{n}  \frac{2^{n}}{n!}\prod_{i=0}^{n-1}(z+i/2)$, which is clearly realrooted in $z$ with simple roots.

Notice that we can write, if $n$ is even,
\[
G_n(x,z)=\Big[ \prod_{j=0}^{n/2-1}(z+j) \Big]\tilde{G_n}(x,z)
\]
and if $n$ is odd,
\[
G_n(x,z)=\Big[ \prod_{j=0}^{(n-1)/2}(z+j)\Big] \tilde{G_n}(x,z),
\]
where $\tilde{G_n}(x,z)$ is a bivariate polynomial. That is, approximately half of the roots in $z$, depending on the oddness, are constant when $x$ is moving. We can therefore investigate instead the evolution of the roots of $ \tilde{G_n}(x,z)$ to avoid considering roots that remain constant (and real). We can find the leading coefficient of the polynomial in $z$ as a function of $x$ by inspection of the formula in \ref{defnfamily} and we get $\frac{(2x)^{n}}{n!}$. 
\begin{definition}\label{defngtilde}
For $x \neq 0$, we can define a bivariate polynomial $\tilde{G_n}(x,z)$ of degree $n$ in $x$ and of degree $\lfloor n/2 \rfloor$ in $z$ as:
\begin{align*}
\tilde{G_n}(x,z)&=\frac{G_n(x,z)}{ \Big[ \prod_{j=0}^{\lceil n/2\rceil-1}(z-\mu_j) \Big] } \\
&=\frac{(2x)^{n}}{n!} \prod_{i=1}^{\lfloor n/2 \rfloor}\big(z-\gamma_i^n(x)\big)
\end{align*}
where $\mu_j= -j$ and the $\gamma_i^n(x)$ are a priori complex roots (defined only for $ x \neq 0$),where we order the roots by nondecreasing module: $ |\gamma_1^n(x)| \leq   |\gamma_2^n(x)| \leq \dots \leq |\gamma^n_{\lfloor n/2 \rfloor}(x)| $. 
\end{definition}

 It simplifies greatly for $x=-1$:
\[
\tilde{G_n}(x,z)_{|_{x=-1}}= \frac{ 2^{n}}{n!} (-1)^n  \prod_{i=1}^{\lfloor n/2\rfloor}(z+1/2+i-1) 
\]

so that  $\gamma_i(-1)= -1/2- (i-1)$ for $i=1,2\dots\lfloor n/2 \rfloor$ are all real and distinct. 

\subsection{Initialization}
We  derive a way to locally prove the existence of real-rootedness if it is known at some extreme point of the interval.

\begin{lemma}[Local existence of the roots and smoothness] \label{implicit}Take $a \in \mathbb{R}$. Assume $P(x,z)$ is a bivariate polynomial such that $P(a,z)$ is realrooted in $z$ of degree $j$ and has only simple roots in $z$ (let us call them $z_i(a)$ for $i=1\dots.j$). Assume that $P(x,z)$  has degree  $j$ for all $x$.Then for $x$ in a neighborhood of $a$, $P(x,z)$ is also realrooted in $z$ of degree $j$ with simple roots $z_i(x)$. Furthermore, the roots $z_i(x)$  are $C^{\infty}$ functions of $x$. 
\end{lemma}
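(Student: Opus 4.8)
The statement is a standard application of the implicit function theorem, applied simultaneously at each of the $j$ simple roots, together with a compactness/continuity argument to rule out roots escaping to infinity. I will organize it in three steps.

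\emph{Step 1: local continuation at each root.} Fix a simple root $z_i(a)$ of $P(a,\cdot)$. Simplicity means $P(a,z_i(a))=0$ and $\partial_z P(a,z_i(a))\neq 0$. Since $P$ is a polynomial in both variables, it is $C^\infty$ (indeed real-analytic) on $\mathbb{R}^2$, so the implicit function theorem applies to the equation $P(x,z)=0$ near $(a,z_i(a))$: there is an interval $I_i\ni a$ and a $C^\infty$ (even analytic) function $z_i\colon I_i\to\mathbb{R}$ with $z_i(a)$ the given root and $P(x,z_i(x))\equiv 0$ on $I_i$. Doing this for each $i=1,\dots,j$ and intersecting, we obtain one interval $I\ni a$ on which all $j$ functions $z_i(x)$ are defined, real, and $C^\infty$, and (shrinking $I$ if necessary, using that the $z_i(a)$ are distinct and the $z_i$ continuous) pairwise distinct for all $x\in I$. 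Thus $P(x,\cdot)$ has at least $j$ distinct real roots for $x\in I$.

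\emph{Step 2: there are no other roots.} Since $P(x,\cdot)$ has degree exactly $j$ for every $x$ (this is the hypothesis that the leading coefficient in $z$ never vanishes), it has at most $j$ roots counted with multiplicity. Combined with Step 1 this forces $P(x,\cdot)$ to have exactly $j$ roots, all real and simple, namely the $z_i(x)$; no complex roots and no multiple roots can occur. Hence $P(x,\cdot)$ is realrooted of degree $j$ with simple roots throughout $I$, and the root functions are the $C^\infty$ functions produced in Step 1.

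\emph{Main obstacle.} The only genuinely delicate point is that the implicit function theorem is purely local, so a priori the intervals $I_i$ could shrink as we pass between roots, or a root could in principle ``merge'' with another or ``run off to infinity'' before we control it. This is handled precisely by the degree hypothesis: because the top coefficient in $z$ is nonvanishing, the roots of $P(x,\cdot)$ depend continuously on $x$ and cannot escape to $\infty$ on a compact $x$-interval, and the count ``exactly $j$ roots'' in Step 2 shows the $j$ branches from Step 1 already exhaust all roots, so no new (possibly complex) root can appear. One could alternatively phrase Step 1 without the IFT, by noting that the roots of a monic polynomial with continuously varying coefficients vary continuously and that near a simple root this dependence is $C^\infty$; but the IFT formulation is cleanest and makes the smoothness of $z_i(x)$ immediate (with $z_i'(x)=-\partial_x P/\partial_z P$ at $(x,z_i(x))$).
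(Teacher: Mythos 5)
Your proof is correct and uses essentially the same approach as the paper: apply the implicit function theorem at each simple root $z_i(a)$ (valid since simplicity gives $\partial_z P(a,z_i(a))\neq 0$) to obtain $C^\infty$ branches, then invoke the fact that $P(x,\cdot)$ has degree exactly $j$ to conclude these $j$ branches exhaust all roots. Your added remarks about shrinking the interval to keep branches distinct and about roots not escaping to infinity are sensible elaborations of the same argument.
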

\begin{proof}
 Consider the equation $P(x,z)=0$, around the points $\big(a,z_i(a)\big)$. \\
 We have  $\partial_zP(x,z)_{|_{x=a, z=z_i(a)}}  \neq 0$ as the roots are simple (they can't be roots of the derivative in $z$). Then using the implicit function theorem, we can find in a neighborhood of each point $z_i(a)$  a $C^{\infty}$ function  $z_i(x)$ which will be the only solution  to the equation $P(x,z)=0$ in this neighborhood. Therefore we have found $j$ roots, and it is the maximal number of roots for a fixed $x$, as the polynomial is of degree $j$. 
 \end{proof}
 
\begin{lemma} 
 $\tilde{G_n}(x,z)$ is real rooted of degree  $\lfloor n/2\rfloor$  in $z$ with simple roots in a neighborhood of $x=-1$.
\end{lemma}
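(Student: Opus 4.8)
The plan is to deduce the statement directly from Lemma~\ref{implicit} applied with $P(x,z) = \tilde{G_n}(x,z)$ and $a = -1$. First I would invoke the explicit evaluation recorded right after Definition~\ref{defngtilde}:
\[
\tilde{G_n}(x,z)_{|_{x=-1}} = \frac{2^{n}}{n!}(-1)^n \prod_{i=1}^{\lfloor n/2\rfloor}\bigl(z+1/2+i-1\bigr),
\]
so that $\tilde{G_n}(-1,z)$ is real-rooted in $z$, of degree exactly $\lfloor n/2\rfloor$, and its roots $\gamma_i(-1) = -1/2-(i-1)$ for $i = 1,\dots,\lfloor n/2\rfloor$ are pairwise distinct, hence all simple. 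This verifies the base-point hypothesis of Lemma~\ref{implicit}.

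Next I would check the remaining hypothesis of Lemma~\ref{implicit}, that $\tilde{G_n}(x,z)$ has degree exactly $\lfloor n/2\rfloor$ in $z$ for all $x$ in a neighborhood of $-1$. By Definition~\ref{defngtilde} the leading coefficient of $\tilde{G_n}(x,z)$ in $z$ equals $\frac{(2x)^{n}}{n!}$, which is nonzero as soon as $x \neq 0$; since $-1 \neq 0$ we may pick a neighborhood of $-1$ that avoids $0$ (on which, moreover, $\tilde{G_n}$ is well-defined in the first place). With both hypotheses in place, Lemma~\ref{implicit} yields an $\varepsilon > 0$ such that for every $x$ with $|x+1| < \varepsilon$ the polynomial $\tilde{G_n}(x,z)$ is real-rooted in $z$ of degree $\lfloor n/2\rfloor$ with simple roots, depending $C^{\infty}$ on $x$; this is exactly the assertion.

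There is essentially no obstacle here: the argument is a bookkeeping check that the hypotheses of Lemma~\ref{implicit} hold at $x=-1$. The only subtlety worth flagging is that $\tilde{G_n}$ is only defined away from $x=0$, which forces the neighborhood to be small — harmless since $-1$ is bounded away from $0$. For later use I would also note that, shrinking $\varepsilon$ if necessary, the $C^{\infty}$ branches furnished by the lemma are precisely the roots $\gamma^n_i(x)$ of Definition~\ref{defngtilde}: near $x=-1$ they are real and, being close to the distinct values $\gamma_i(-1) = -1/2-(i-1)$, they inherit the same ordering by absolute value.
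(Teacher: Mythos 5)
Your argument is exactly the paper's: evaluate $\tilde{G_n}(-1,z)$ explicitly, observe it has $\lfloor n/2\rfloor$ distinct real roots, and invoke Lemma~\ref{implicit} with $a=-1$. You are slightly more careful than the paper in also verifying the constant-degree hypothesis via the leading coefficient $\frac{(2x)^n}{n!}$, and you correctly write $\lfloor n/2\rfloor$ where the paper has a typo ($\lceil n/2\rceil$); otherwise the two proofs coincide.
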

\begin{proof}
We have:

 \[
 \tilde{G_n}(x,z)_{|_{x=-1}}= \frac{ 2^{n}}{n!} (-1)^n  \prod_{i=1}^{\lfloor n/2\rfloor}(z+1/2+i-1) 
\]
which has $\lceil n/2\rceil$  simple roots in $z$, so we just have to apply Lemma~\ref{implicit} with $a=-1$. 
\end{proof}

\begin{lemma}\label{local}
 The roots $\gamma_i(x)$ of $\tilde{G_n}(x,z)$ are all  increasing when $x$ is, in a neighborhood of $x=-1$. More precisely,  $\frac{d \gamma_i(x)}{dx}> 0$ for $x>-1$ in this neighborhood. 
\end{lemma}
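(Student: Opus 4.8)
The plan is to turn the implicit-function argument of Lemma~\ref{implicit} into a quantitative one by Taylor-expanding in $x$ about the completely understood point $x=-1$. Since at a non-constant root $\gamma_i(x)$ the factor $\prod_{j=0}^{\lceil n/2\rceil-1}(z+j)$ does not vanish, $\gamma_i(x)$ is also a \emph{simple} root of $z\mapsto G_n(x,z)$ (as $\partial_zG_n=\partial_z\tilde G_n\cdot\prod_j(z+j)$ at such a point), so by implicit differentiation $\gamma_i'(x)=-\,\partial_xG_n\big(x,\gamma_i(x)\big)\big/\partial_zG_n\big(x,\gamma_i(x)\big)$, with $\gamma_i$ real-analytic near $x=-1$. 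I would run everything off the expansion \eqref{decomposition}, written as $G_n(x,z)=\sum_{j\ge 0}a_j(z)(x+1)^j$ with $a_j(z)=\frac{2^n\binom{n}{j}}{n!}(-1)^{n+j}\prod_{l=2j}^{n+j-1}(z+l/2)\prod_{l=0}^{j-1}(z+l)$, so that $G_n(-1,z)=a_0(z)$ and its non-constant roots are $\gamma_i(-1)=-(2i-1)/2$ for $i=1,\dots,\lfloor n/2\rfloor$.

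The combinatorial key is that $l=2i-1$ lies in $\{2j,\dots,n+j-1\}$ exactly when $0\le j\le i-1$ (here one uses $2i\le n$), so $a_j\big(\gamma_i(-1)\big)=0$ for every $j\le i-1$, while $a_i\big(\gamma_i(-1)\big)\ne0$ (the half-integer $-(2i-1)/2$ is not a root of $a_i$, whose roots are integers or of the form $-l/2$ with $l\ge 2i$). Substituting $\gamma_i(x)=\gamma_i(-1)+\delta(x)$ into $\sum_j a_j(\gamma_i(x))(x+1)^j=0$, the terms with $j\le i-1$ collect into $\delta(x)\,G(x)$ with $G$ analytic and $G(-1)=a_0'\big(\gamma_i(-1)\big)\ne0$, and the terms with $j\ge i$ collect into $(x+1)^i\,H(x)$ with $H$ analytic and $H(-1)=a_i\big(\gamma_i(-1)\big)\ne0$. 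Hence $\delta(x)=-(x+1)^iH(x)/G(x)$, so $\gamma_i$ vanishes to exact order $i$ at $x=-1$ and $\gamma_i'(x)=i(x+1)^{i-1}\Big(-\tfrac{a_i(\gamma_i(-1))}{a_0'(\gamma_i(-1))}\Big)+O\big((x+1)^i\big)$.

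It then suffices to compare two signs. Substituting $z=\gamma_i(-1)=-(2i-1)/2$ and counting negative factors: in $a_i\big(\gamma_i(-1)\big)$ the product over $l\in\{2i,\dots,n+i-1\}$ is positive while $\prod_{l=0}^{i-1}(z+l)$ has $i$ negative factors, so $\operatorname{sign}a_i(\gamma_i(-1))=(-1)^{n+i}(-1)^{i}=(-1)^{n}$; and $a_0'(\gamma_i(-1))=\frac{2^n}{n!}(-1)^n\prod_{0\le l\le n-1,\ l\ne 2i-1}\big(\gamma_i(-1)+l/2\big)$ has exactly $2i-1$ negative factors, so $\operatorname{sign}a_0'(\gamma_i(-1))=(-1)^{n}(-1)^{2i-1}=(-1)^{n+1}$. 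These are opposite, so $-a_i(\gamma_i(-1))/a_0'(\gamma_i(-1))>0$, and therefore $\gamma_i'(x)>0$ for $x>-1$ in a sufficiently small one-sided neighborhood of $-1$; intersecting the finitely many such neighborhoods over $i=1,\dots,\lfloor n/2\rfloor$ yields a single one. For $i=1$ this specializes to the classical first-order statement, which a short direct computation evaluates to $\gamma_1'(-1)=\tfrac12 n(n-1)>0$.

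The hard part — and what makes this genuinely finer than classical parameter-monotonicity — is exactly that the naive first-order test fails: for $i\ge2$ one has $\gamma_i'(-1)=0$, and in fact the first $i-1$ Taylor coefficients of $\gamma_i$ at $x=-1$ all vanish, so one must locate the first nonvanishing order and pin down its sign. The whole argument hinges on the vanishing pattern of the $a_j$ at the half-integers $\gamma_i(-1)$, which is what makes the leading coefficient both identifiable and of a predictable sign; note also that the nonvanishing of $a_i(\gamma_i(-1))$ shows $z=-(2i-1)/2$ is not a root of $G_n(x,\cdot)$ for all $x$, so $\gamma_i$ is genuinely non-constant and the coefficient above is truly the first nonzero one.
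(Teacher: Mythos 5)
Your proof is correct and takes essentially the same route as the paper: both arguments are built on the expansion \eqref{decomposition}, hinge on the identical combinatorial observation that the coefficient of $(x+1)^j$ vanishes at $z=\gamma_i(-1)=-(2i-1)/2$ precisely for $j\le i-1$ while the coefficient at $j=i$ does not, and finish by checking the sign of the resulting leading Taylor coefficient of $\gamma_i$ at $x=-1$. The only cosmetic difference is that you work with $G_n$ rather than $\tilde G_n$ (harmless near $x=-1$ since the constant-root factor is nonzero at half-integers) and obtain the Taylor order $i$ by factoring $\delta(x)G(x)+(x+1)^iH(x)=0$, which is a cleaner replacement for the paper's inductive chain-rule computation in Lemma~\ref{derivdeepG}, but the substance of the argument is the same.
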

\begin{proof}
We will prove this fact by first proving the following.

\begin{lemma}\label{derivdeepG}
We have: $ \frac{d^l \gamma_i(x)}{dx^l}_{|_{x=-1}}= 0 $  for $1\leq l < i$,   and    $\frac{d^i \gamma_i(x)}{dx^i}_{|_{x=-1}}> 0$.
\end{lemma} 

\begin{proof}
We consider $l \leq \lfloor n/2\rfloor$ in the following.  
Using Equation~\ref{decomposition} we get that
\[
\partial_x^lG_n(x,z)_{|_{x=-1}}=  l!\frac{{2^{n} \binom{n}{l}}}{n!}  (-1)^{n+l} \prod_{j=2l}^{n+l-1}(z+j/2) \prod_{j=0}^{l-1}(z+j) 
\]
so that by dividing by $\prod_{j=0}^{\lceil n/2\rceil-1}(z+j)$ and reorganizing the factors,
\[
\partial_x^l\tilde{G_n}(x,z)_{|_{x=-1}}=\frac{{2^{n} \binom{n}{l}} l!}{n!} (-1)^{n+l}  \prod_{j=l}^{\lceil n/2\rceil-2}(z+1/2+j)  \prod_{j=2\lceil n/2\rceil-1 }^{n+l-1}(z+j/2) .
\]

 We then see, using  that $\gamma_i(-1)= -1/2- (i-1)$ and $i \leq \lfloor n/2\rfloor$, that
\[
\partial_x^l\tilde{G_n}(x,z) \big(-1,\gamma_i(-1)\big)= 0   \text{      for $i \geq l+1$}  
\]
and
\[
(-1)^{n+i}\partial_x^i\tilde{G_n} \big(-1,\gamma_i(-1)\big)=\frac{{2^{n} \binom{n}{i}} i!}{n!}  \prod_{j=i}^{\lceil n/2\rceil-2}\big(j-(i-1)\big)  \prod_{j=2\lceil n/2\rceil-1}^{n+i-1}\big((j-1)/2- (i-1)\big) >0.
\]

Now we have $\tilde{G_n}\big(x,\gamma_i(x)\big)=0$ for all $i$, by definition, so differentiating with respect to $x$, we get:
\[
\frac{d \gamma_i(x)}{dx}= - \frac{\partial_x\tilde{G_n}}{\partial_z\tilde{G_n}} \big(x,\gamma_i(x)\big).
\]
 Note that the denominator is nonzero at $-1$ as the roots in $z$ are simple ( so they won't be roots of the derivative in $z$). Using induction on $l$, we get for $i > l\geq 1$, that
\[
\frac{d^l \gamma_i(x)}{dx^l}_{|_{x=-1}}= 0.
\]
Indeed when we differentiate $l-1$ times in $x$ the complicated expression $- \frac{\partial_x\tilde{G_n}(x,\gamma_i(x))}{\partial_z\tilde{G_n}(x,\gamma_i(x))}$, we will get a sum of  terms that all contain either $ \partial_x^k\tilde{G_n}(x,\gamma_i(x))_{x=-1}$ for $k \leq l$ (which are zero by the above computations), or  $\frac{d^k \gamma_i(x)}{dx^l}_{|_{x=-1}}$ for $k<l$ (which are zero by induction). For $l=i$, the only nonzero term comes from differentiating only the numerator with respect to the first variable $i$ times: 
\[
\frac{d^i \gamma_i(x)}{dx^i}_{|_{x=-1}}= - \frac{\partial_x^i\tilde{G_n}}{\partial_z\tilde{G_n}} \big(-1,\gamma_i(-1)\big).
\]

Now, we have $ \tilde{G_n}(x,z)_{|_{x=-1}}= (-1)^n \frac{2^{n}}{n!} \prod_{j=0}^{\lfloor n/2\rfloor-1}(z+1/2+j)$, so that
\begin{align*}
\partial_z \tilde{G_n}(x,z)\big(-1,\gamma_i(-1)\big)&=  (-1)^n \frac{2^{n}}{n!} \prod_{j=0, j \neq i-1}^{\lfloor n/2\rfloor-1}\big(j-(i-1)\big)\\
&=  (-1)^n \frac{2^{n}}{n!} (-1)^{i-1}\prod_{j=0}^{i-2}\big((i-1)- j \big) \prod_{j=i}^{\lfloor n/2\rfloor-1}\big(j-(i-1)\big) 
\end{align*}
and it follows that $(-1)^{n+i-1}\partial_z \tilde{G_n}(x,z)\big(-1,\gamma_i(-1)\big) >0$. Therefore
\[
\frac{d^i \gamma_i(x)}{dx^i}_{|_{x=-1}} >0
\]
as claimed. 
\end{proof}

Then Lemma~\ref{local} follows easily by Taylor expansion of the roots in $x$ around $-1$: 
\[
\gamma_i(x)= \gamma_i(-1)+ \frac{(x+1)^i}{i!}\frac{d^i \gamma_i(x)}{dx^i}_{|_{x=-1}} + o( (x+1)^i).
\]
\end{proof}

In the following we prove by induction the following statement: for $x \in [-1,0[ $, $\tilde{G_n}(x,z)$ is realrooted in $z$ with roots $\gamma^n_i(x)$ increasing to $+\infty$ when x converges to $0$. The case $n=1$ is trivial as $\tilde{G_n}(x,z)=1$. Assume that it is true for $\tilde{G}_{n-1}(x,z)$. 

\subsection{Interlacing properties: invariants preserved along realrootedness}
In this subsection, we find invariants that enable us to characterize realrootedness. We recall here that when there is no ambiguity,  $\gamma_i(x)$ refers to $\gamma^n_i(x)$. 

\begin{lemma}\label{simpleroots}  Assume $\gamma_i(x)$ are real and distinct for $x \in ]-1,b[=I$, $b \leq 0$, and all $i=1,2\dots \lfloor n/2 \rfloor$. Then $\partial_x\tilde{G_n}(x,\gamma_i(x))$  can't be zero for $x\in I$. As a result it has a constant sign on this interval. Equivalently,  $\tilde{G}_{n-1}(x,\gamma_i(x))$ can't be zero either. Said otherwise, we can't have a  shared root for  $\tilde{G}_{n-1} (x,z)$ or   $\partial_x \tilde{G}_n(x,z)$ and   $\tilde{G}_{n} (x,z)$ on $I$. 
\end{lemma}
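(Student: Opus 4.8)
The plan is to argue by contradiction, exploiting the degree relation between $\tilde{G}_n$ and $\tilde{G}_{n-1}$ together with the known monotonicity of the roots of Gegenbauer polynomials in $x$ (Theorem~\ref{monoroots}). First I would record the purely algebraic observation that links the three polynomials appearing in the statement. Differentiating the product decomposition $G_n(x,z) = \big[\prod_{j}(z-\mu_j)\big]\tilde{G}_n(x,z)$ with respect to $x$ (the bracketed factor does not depend on $x$) shows that $\partial_x\tilde{G}_n(x,z)$ vanishes at a point $(x,z)$ with $x\neq 0$ exactly when $\partial_xG_n(x,z)$ does; and the classical three-term/derivative relations for Gegenbauer polynomials express $\partial_xG_n(x,z)$ as a nonzero multiple of $G_{n-1}^{(z+1)}(x)$, hence after matching parameters, as a nonzero multiple of the polynomial I will call $\tilde{G}_{n-1}(x,z)$ in the lemma. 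So ``$\partial_x\tilde{G}_n(x,\gamma_i(x))=0$'' and ``$\tilde{G}_{n-1}(x,\gamma_i(x))=0$'' are indeed equivalent statements on $I$, which reduces the lemma to ruling out a shared root of $\tilde{G}_n$ and $\tilde{G}_{n-1}$ along a branch $z=\gamma_i(x)$.

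Next I would suppose, for contradiction, that there is some $x_0\in I$ and some index $i$ with $\tilde{G}_n(x_0,\gamma_i(x_0)) = \tilde{G}_{n-1}(x_0,\gamma_i(x_0)) = 0$; set $z_0=\gamma_i(x_0)$. Because $z_0$ is a simple root of $\tilde{G}_n(x_0,\cdot)$ by hypothesis, $\partial_z\tilde{G}_n(x_0,z_0)\neq 0$, so Lemma~\ref{implicit} gives a smooth branch $z_i(x)$ through $(x_0,z_0)$; likewise, $z_0$ being a root of $G_{n-1}(x_0,\cdot)$ of degree $\lfloor (n-1)/2\rfloor$ in $z$ (and we may assume it simple there, or pass to $\tilde{G}_{n-1}$), there is a smooth branch of $G_{n-1}$-roots through the same point. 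The key point is then that the $x$-roots of $G_n$ and of $G_{n-1}$ interlace strictly for every $z>-1/2$ by Theorem~\ref{interlderiv}, while for $z<-1/2$ the branch $z=\gamma_i(x)$ can enter a region where this fails — but the contradiction I want lives at $x_0$ itself: a common zero $(x_0,z_0)$ of $G_n$ and $G_{n-1}$ would force, via the three-term recurrence $G_n = 2(\ldots)xG_{n-1} - (\ldots)G_{n-2}$, that $G_{n-2}(x_0,z_0)=0$ too (the coefficient in front of $G_{n-2}$ being nonzero), and inductively all $G_m(x_0,z_0)=0$ down to $G_0\equiv$ const $\neq 0$ (or $G_1$), which is absurd. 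This is the cleanest route: a simultaneous zero of two consecutive members of the family propagates down the recurrence to a nonzero constant.

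Finally, once a common zero is excluded, $\partial_x\tilde{G}_n(x,\gamma_i(x))$ is a continuous nonvanishing function of $x$ on the connected interval $I$, hence of constant sign there, which is the remaining assertion. I would then close by noting the last sentence of the statement is just the contrapositive rephrasing of what was proved. The main obstacle I anticipate is bookkeeping at the level of \emph{which} polynomial plays the role of ``$\tilde{G}_{n-1}(x,z)$'': the parameter shift in $\partial_xG_n^{(z)}(x)\propto G_{n-1}^{(z+1)}(x)$ must be reconciled with the factored-out constant roots $\prod_j(z+j)$, and one has to check that after this reconciliation the recurrence argument still applies verbatim (i.e. that no spurious cancellation of the leading coefficient occurs for $x\neq 0$). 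The monotonicity input of Theorem~\ref{monoroots}, while available, turns out not to be needed for this particular lemma — it will be used later to locate the branches — so I would keep the proof self-contained using only the recurrence and Lemma~\ref{implicit}.
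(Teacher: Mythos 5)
Your overall strategy—propagate a common zero of two consecutive members of the family along the three‑term recurrence until you reach a contradiction—is the right kind of idea, but the \emph{direction} of the propagation is where the proof breaks. You descend from $G_n, G_{n-1}$ down to $G_0$, asserting ``the coefficient in front of $G_{n-2}$ being nonzero.'' Written out, the step from $G_m, G_{m-1}$ to $G_{m-2}$ reads
\begin{equation*}
(m+2z_0-2)\,G_{m-2}(x_0,z_0)=(2m-2+2z_0)\,x_0\,G_{m-1}(x_0,z_0)-m\,G_m(x_0,z_0),
\end{equation*}
so the descent requires $m+2z_0-2\neq 0$ at every step $m=n,n-1,\dots,2$. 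But $z_0=\gamma_i(x_0)$ is a \emph{negative} quantity that, along the branch, sweeps out an interval containing the half‑integers $1-m/2$ for several $m\leq n$: from $\gamma_i(-1)=1/2-i$ upward, one has $z_0>1/2-\lfloor n/2\rfloor$, hence the critical index $m=2-2z_0$ satisfies $m<1+2\lfloor n/2\rfloor\leq n+1$, i.e.\ it lies inside the range of your descent. When that coefficient vanishes, the chain of implications stops and you cannot conclude $G_{m-2}(x_0,z_0)=0$, so the contradiction with $G_0\equiv\text{const}\neq 0$ is never reached. This is precisely the obstruction the paper circumvents by propagating \emph{upward}: going to $G_{n+1},G_{n+2},\dots$, the relevant coefficients are $n+k+2z_0-1$ and $z_0+\lceil (n+k)/2\rceil-1$, and these are eventually (in fact immediately) positive because $\gamma_i(x_0)>-(\lfloor n/2\rfloor-1/2)$—a bound the paper gets by choosing $x_0$ to be the \emph{smallest} bad point, so that $\gamma_i$ is still monotone on $]-1,x_0]$. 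You never establish such a lower bound on $\gamma_i(x_0)$, and without it even the upward route would be unjustified. Moreover, going up alone is not enough: the paper must also \emph{raise the parameter} via $\partial_xG_{n+k}(x,z)=2zG_{n+k-1}(x,z+1)$ until $z$ enters the classical region $z>0$, where simplicity of roots for the orthogonal family supplies the actual contradiction. Your downward endpoint $G_0=\text{const}$ would be a slicker finish, but it cannot be reached.

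A secondary issue is the reduction at the start. You derive the equivalence between $\partial_x\tilde{G}_n(x,\gamma_i(x))=0$ and $\tilde{G}_{n-1}(x,\gamma_i(x))=0$ from the relation $\partial_xG_n^{(z)}\propto G_{n-1}^{(z+1)}$. After dividing out the constant‑root factors, that identity actually gives $\partial_x\tilde{G}_n(x,z)$ as a multiple of $\tilde{G}_{n-1}(x,z+1)$, i.e.\ with the parameter shifted by $1$, which is \emph{not} the statement of the lemma. The equivalence stated in the lemma (same $z$) instead comes from
\begin{equation*}
(1-x^2)\,\partial_xG_n(x,z)=-nx\,G_n(x,z)+(n+2z-1)\,G_{n-1}(x,z),
\end{equation*}
and deducing $\tilde{G}_{n-1}(x_0,\gamma_i(x_0))=0$ from $\tilde{G}_n(x_0,\gamma_i(x_0))=\partial_x\tilde{G}_n(x_0,\gamma_i(x_0))=0$ again requires the coefficient $n+2\gamma_i(x_0)-1$ (and the parity factor $\alpha(\gamma_i(x_0))$) to be nonzero—which once more needs the lower bound on $\gamma_i(x_0)$. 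So both the initial reduction and the propagation step hinge on a quantitative control of the branch that is missing from your argument; supplying it, and reversing the direction of the recurrence, essentially recovers the paper's proof.
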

\begin{proof}
 Note that the property is not true if we replace $\tilde{G_n}$ by $G_n$ as there can be trivial roots in z at the negative integers shared by the polynomial and its derivative, by inspection of the equality $\partial_x G_n(x,\gamma_i(x))= \prod_{j=0}^{\lceil n/2\rceil-1 }\big(\gamma_i(x)+j\big) \partial_x\tilde{G_n}\big(x,\gamma_i(x)\big)$. All the goal will be to bring ourselves back to positive parameters for which the results we claim are well known.
 
Let us assume by contradiction that $\partial_x \tilde{G_n} (x_0,\gamma_i(x_0)) =0$ some $i$ and $x_0 \in ]-1,b[$.  As  $\partial_x  \tilde{G_n} (x,\gamma_i(x)) $ is nonzero in a neighborhood of $x=-1$, $x>-1$ (see Lemma~\ref{derivdeepG}), then we can assume $x_0$ is the smallest $x>-1$ such that  $\partial_x \tilde{ G}_n (x,\gamma_i(x)) =0$.  The function $\partial_x \tilde{G_n} (x,\gamma_i(x))$ has a constant sign on $]-1,x_0]$, and as a consequence $\gamma_i(x)$ is  increasing in $x$ on this interval. 
As  $\gamma_i(-1)\geq - (\lfloor n/2 \rfloor-1)-1/2$ (which is the value of the extremal negative root), $\gamma_i(x_0)> - (\lfloor n/2 \rfloor-1/2) $.
Consider  the algebraic identity (see Equations~(4.7.27) in \cite{Szego}), valid for all $z$:
\[
(1-x_0^2) \partial_x G_n (x_0,z)
= -nx_0 G_n (x_0,z) +(n+2 z-1)G_{n-1} (x_0,z),
\] 
so that plugging in  $G_n (x_0,z)= \prod_{j=0}^{\lceil n/2\rceil-1 }\big(z+j\big) \partial_x\tilde{G_n}\big(x,z\big)$, $\partial_x G_n(x,z)= \prod_{j=0}^{\lceil n/2\rceil-1 }\big(z+j\big) \partial_x\tilde{G_n}\big(x,z\big)$ and  $G_{n-1}(x_0,z)= \prod_{j=0}^{\lceil (n-1)/2\rceil-1 }\big(z+j\big) \tilde{G}_{n-1}\big(x,z\big)$ and dividing by the common factor \\
$\prod_{j=0}^{\lceil (n-1)/2\rceil-1 }\big(z+j\big) $ on both sides we get:
\[
(1-x_0^2) \alpha(z)\partial_x \tilde{G}_n (x_0,z)
= -nx_0\alpha(z) \tilde{G}_n (x_0,z) +(n+2 z-1)\tilde{G}_{n-1} (x_0,z),
\] 
where $\alpha(z)=1$ or $\alpha(z)=z+\lceil n/2\rceil-1$ depending on the parity of $n$.
Now choose $z=\gamma_i(x_0)$ and remark that  $(n+2 \gamma_i(x_0)-1)>0$ as  $\gamma_i(x_0)> - (\lfloor n/2 \rfloor-1/2)$. On the other hand,   $\partial_x \tilde{G_n} (x_0,\gamma_i(x_0)) =0$  as well as $ \tilde{G_n} (x_0,\gamma_i(x_0)) =0$, and therefore we get $ \tilde{G}_{n-1} (x_0,\gamma_i(x_0)) =0$. Notice that choosing $x_0$ to be the smallest number such that $ \tilde{G}_{n-1} (x_0,\gamma_i(x_0)) =0$ is equivalent to defining it as the smallest number such that  $\partial_x  \tilde{G}_{n} (x_0,\gamma_i(x_0)) =0$, as the roots will be monotonous on $[-1,x_0]$ and the factors $\alpha(\gamma_i(x_0))$ and $(n+2 \gamma_i(x_0)-1)$ are nonzero.\\
 Then, using the recurrence relation (see Equations~(4.7.27) in \cite{Szego} again):
\[
\frac{n+1}{2n+2z}  G_{n+1} (x_0,z) = x_0 G_n (x_0,z) - \frac{n+2z-1}{2n+2z}G_{n-1} (x_0,z),
\]
and dividing by the common shared trivial factors as above we get:
\[
\frac{n+1}{2n+2z} (z+\lceil (n+1)/2\rceil-1)  \tilde{G}_{n+1} (x_0,z) = x_0 \alpha(z)\tilde{G}_n (x_0,z) - \frac{n+2z-1}{2n+2z}\tilde{G}_{n-1} (x_0,z).
\]
 As $-\lceil (n+1)/2\rceil+1 < -\lfloor n/2 \rfloor+1/2$, we have $\gamma_i(x_0) + \lceil (n+1)/2\rceil-1>0$, as well as  $2n+2\gamma_i(x_0)>0$, and if we plug in $z=\gamma_i(x_0)$ above we get  $\tilde{G}_{n+1} (x_0,\gamma_i(x_0))=0$. By induction, we prove exactly by the same method that  $\tilde{G}_{n+k} (x_0,\gamma_i(x_0))=0$ for all $k \in \mathbb{N}$.  
 If we go back to our first equation applied to $\tilde{G}_{n+k}$ instead of $\tilde{G}_{n}$ we obtain:
 \[
(1-x_0^2) \alpha_k(z)\partial_x \tilde{G}_{n+k} (x_0,z)
= -(n+k)x_0\alpha_k(z) \tilde{G}_{n+k} (x_0,z) +(n+k+2z-1)\tilde{G}_{n+k-1} (x_0,z),
\] 
 where  $\alpha_k(z)=1$ or $\alpha_k(z)=z+\lceil (n+k)/2\rceil-1$ depending on the parity of $n$. For $k \geq 1$,  $\lceil (n+k)/2\rceil-1 >\lfloor n/2 \rfloor- 1/2$ and  $\alpha_k(\gamma_i(x_0))>0$ so that plugging in $z=\gamma_i(x_0)$ above we get $\partial_x \tilde{G}_{n+k} (x_0,\gamma_i(x_0))=0$ (which is also true by assumption for $k=0$). 
 But we have for $|x_0|<1$ using Equations~(4.7.27) in \cite{Szego}:
\[
\partial_x G_{n+k} (x_0,z) = 2z G_{n+k-1} (x_0,z+1),
\]
so that simplifying by the common factors in $z$, we are left with:
\[
\partial_x \tilde{G}_{n+k} (x_0,z) = 2 \beta_k(z) \tilde{G}_{n+k-1}(x_0,z+1),
\]
where $\beta_k(z)= z+\lceil (n+k-1)/2\rceil$ or  $\beta_k(z)=1$ according to the parity of $n$. We plug in $z=\gamma_i(x_0)$ and as $\beta_k(\gamma_i(x_0))>0$, we readily get $\tilde{G}_{n+k-1}(x_0,\gamma_i(x_0)+1)=0$, for all $k \in \mathbb{N}$.  Then it is easy to show by induction, using the above strategy, replacing  $\gamma_i(x_0)$ with $\gamma_i(x_0)+j-1$,  that   $\tilde{G}_{n+k} (x_0,\gamma_i(x_0)+j)=0$ for all  $j \in \mathbb{N}$, and all $k \in \mathbb{N}$. For $j$ larger than $\lfloor n/2 \rfloor$, the parameter $\gamma_i(x_0)+ j$ is positive, and we can now use classical results on roots of Gegenbauer polynomials. This means that successive Gegenbauer polynomials with parameter  $\gamma_i(x_0)+j$ have the root $x_0$ in common, so that their derivatives share these roots too,  which is absurd as their roots are simple by orthogonality. We conclude that $\partial_x \tilde{G}_n (x,\gamma_i(x))$ has a constant sign for all  $x\in]-1,b[$. Note that the above proof allows us to say that  $\tilde{G}_{n-1} (x,\gamma_i(x))$  can't be zero either.

\end{proof}
\begin {remark} \label{extremeequality}
In the case in which $lim_{x \to b} \gamma_i(x)= \mu$ is finite, we have by continuity $\tilde{G}_{n} (b,\mu)=0$. Then using exactly the same reasoning as above we can prove that we can't have $ \tilde{G}_{n-1} (b,\mu)=0$. Indeed, we will still have $\mu> - (\lfloor n/2 \rfloor-1/2)$ and the whole proof can be applied exactly the same way. 
\end{remark}

\begin{corollary}\label{extendedmonotonicitygegenbauer} Assume $\gamma_i(x)$ are real and distinct for all $i$  and $x \in ]-1,b[$, $b<0$. Then it follows from the previous proof that for  $x \in ]-1,b[$,
\[
 \frac{d\gamma_i(x)}{dx}>0.
 \]
\end{corollary}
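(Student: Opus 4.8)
\emph{Proof proposal.} The plan is to show that the map $x\mapsto\dfrac{d\gamma_i(x)}{dx}$ is continuous and nowhere vanishing on the connected interval $]-1,b[$, hence of constant sign there, and then to fix that sign using the behaviour near $x=-1$ already recorded in Lemma~\ref{local}.

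First I would check that each $\gamma_i$ is a $C^{\infty}$ function on all of $]-1,b[$. By hypothesis the roots $\gamma_1(x),\dots,\gamma_{\lfloor n/2\rfloor}(x)$ are real and pairwise distinct for every $x\in]-1,b[$, so each is a simple root of $z\mapsto\tilde{G_n}(x,z)$ (which has degree exactly $\lfloor n/2\rfloor$ in $z$ since $x\neq 0$), and Lemma~\ref{implicit} applies at every such point. Since distinct roots never coalesce on this interval, the resulting local branches glue into globally defined smooth functions $\gamma_i$ on $]-1,b[$, consistently labelled. Differentiating the identity $\tilde{G_n}\big(x,\gamma_i(x)\big)=0$ then yields
\[
\frac{d\gamma_i(x)}{dx}=-\frac{\partial_x\tilde{G_n}\big(x,\gamma_i(x)\big)}{\partial_z\tilde{G_n}\big(x,\gamma_i(x)\big)},
\]
where the denominator is nonzero precisely because $\gamma_i(x)$ is a simple root in $z$.

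It remains to see that the right-hand side never vanishes and to determine its sign. The numerator $\partial_x\tilde{G_n}\big(x,\gamma_i(x)\big)$ is nonzero for every $x\in]-1,b[$ by Lemma~\ref{simpleroots}. Hence $\dfrac{d\gamma_i(x)}{dx}$, being a quotient of two continuous nowhere-vanishing functions on an interval, is continuous and of constant sign on $]-1,b[$ by the intermediate value theorem. On the nonempty overlap of $]-1,b[$ with the right-neighbourhood of $-1$ furnished by Lemma~\ref{local} this sign is known to be positive, so $\dfrac{d\gamma_i(x)}{dx}>0$ throughout $]-1,b[$, as claimed.

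I do not anticipate a genuine obstacle here: the substantive content was extracted in Lemma~\ref{simpleroots}, and once the numerator of the logarithmic derivative is known to be sign-definite the corollary is a soft continuity-and-connectedness argument. The only points requiring care are that the neighbourhood of $x=-1$ from Lemma~\ref{local} indeed meets $]-1,b[$ (which it does, both being right-neighbourhoods of $-1$), so that the sign comparison is legitimate, and that the smooth branches $\gamma_i$ are coherently labelled across the whole interval, which is guaranteed by the non-collision of the roots under the standing hypothesis.
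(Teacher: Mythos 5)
Your proof is correct and follows essentially the same route the paper intends: Lemma~\ref{simpleroots} gives that the numerator $\partial_x\tilde{G_n}(x,\gamma_i(x))$ is nowhere zero on $]-1,b[$, simplicity of roots gives that the denominator $\partial_z\tilde{G_n}(x,\gamma_i(x))$ is nowhere zero, so the quotient has constant sign, and Lemma~\ref{local} pins that sign to be positive near $x=-1$. You have merely written out in full the chain of observations that the paper compresses into ``it follows from the previous proof.''
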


\begin{lemma}\label{interlacing} Consider an interval $I=[-1,b[$ such  that  $\tilde{G_n}(x,z)$ has simple real roots in $z$ on $I$, then by induction the same will be true of $\tilde{G}_{n-1}(x,z)$ and their roots interlace strictly on$ ]-1,b[$ (they have common roots at $-1$). 
\end{lemma}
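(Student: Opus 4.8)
The plan is to carry this out as part of the same induction on $n$ driving the section: the induction hypothesis already supplies that $\tilde{G}_{n-1}(x,z)$ is realrooted in $z$ with simple roots for every $x\in[-1,0[$, hence on $I$, so that part of the statement needs nothing new and the real content is strict interlacing on $]-1,b[$. First I would record the boundary data at $x=-1$: the explicit formulas of Section~2.1 give $\tilde{G}_n(-1,z)=(-1)^n\frac{2^n}{n!}\prod_{j=0}^{\lfloor n/2\rfloor-1}(z+1/2+j)$ and the analogous expression for $\tilde{G}_{n-1}$, so the two root sets coincide in pairs, $\gamma_i^n(-1)=\gamma_i^{n-1}(-1)=-1/2-(i-1)$ on the common range of $i$, and are spaced by $1$; this is a degenerate interlacing, with the ``$\ge$'' inequalities holding as equalities and the alternating ones strictly. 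Everything then reduces to the direction in which each coincident pair separates as $x$ enters $]-1,b[$, and to the fact that the resulting order cannot change on $]-1,b[$.

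Next I would gather the rigidity available on $]-1,b[$: each root $\gamma_i^n(x)$ is $C^\infty$, simple and strictly increasing by Corollary~\ref{extendedmonotonicitygegenbauer}; the roots $\gamma_j^{n-1}(x)$ are simple by the induction hypothesis; and, crucially, Lemma~\ref{simpleroots} gives $\tilde{G}_{n-1}\bigl(x,\gamma_i^n(x)\bigr)\ne 0$ on $]-1,b[$, so no root of $\tilde{G}_n$ ever meets a root of $\tilde{G}_{n-1}$ there. Hence $\tilde{G}_{n-1}\bigl(x,\gamma_i^n(x)\bigr)$ keeps a constant sign on $]-1,b[$, and it suffices to compute that sign. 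I would compute it from the identity used in the proof of Lemma~\ref{simpleroots}, $(1-x^2)\alpha(z)\,\partial_x\tilde{G}_n(x,z)=-nx\,\alpha(z)\,\tilde{G}_n(x,z)+(n+2z-1)\,\tilde{G}_{n-1}(x,z)$ (from Equations~(4.7.27) in \cite{Szego}, after dividing out the common trivial $z$-factors): evaluating at $z=\gamma_i^n(x)$ annihilates the first two terms and leaves $\tilde{G}_{n-1}\bigl(x,\gamma_i^n(x)\bigr)$ equal to $\partial_x\tilde{G}_n\bigl(x,\gamma_i^n(x)\bigr)$ times the factor $(1-x^2)\alpha(\gamma_i^n(x))/(n+2\gamma_i^n(x)-1)$, which is positive on $]-1,b[$ for precisely the reason invoked in Lemma~\ref{simpleroots} (one uses $\gamma_i^n(x)>\gamma_i^n(-1)\ge -\lfloor n/2\rfloor+1/2$). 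Differentiating $\tilde{G}_n\bigl(x,\gamma_i^n(x)\bigr)\equiv 0$ in $x$ gives $\partial_x\tilde{G}_n\bigl(x,\gamma_i^n(x)\bigr)=-(\gamma_i^n)'(x)\,\partial_z\tilde{G}_n\bigl(x,\gamma_i^n(x)\bigr)$ with $(\gamma_i^n)'(x)>0$, and $\partial_z\tilde{G}_n$ at its $i$-th largest root has the sign of the leading $z$-coefficient $(2x)^n/n!$ of $\tilde{G}_n$ times $(-1)^{i-1}$; since $x<0$ that coefficient has sign $(-1)^n$, so $\operatorname{sign}\tilde{G}_{n-1}\bigl(x,\gamma_i^n(x)\bigr)=(-1)^{n+i}$ throughout $]-1,b[$.

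Finally I would match this against the sign pattern of $\tilde{G}_{n-1}(x,\cdot)$ on the intervals cut out by its own simple real roots $\gamma_1^{n-1}(x)>\gamma_2^{n-1}(x)>\cdots$: its leading $z$-coefficient $(2x)^{n-1}/(n-1)!$ has sign $(-1)^{n-1}$ for $x<0$, so on $\bigl(\gamma_i^{n-1}(x),\gamma_{i-1}^{n-1}(x)\bigr)$ it has sign $(-1)^{n-1}(-1)^{i-1}=(-1)^{n+i}$ and on $\bigl(\gamma_{i+1}^{n-1}(x),\gamma_i^{n-1}(x)\bigr)$ it has sign $(-1)^{n+i-1}$. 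The value $(-1)^{n+i}$ obtained above therefore forces $\gamma_i^n(x)\in\bigl(\gamma_i^{n-1}(x),\gamma_{i-1}^{n-1}(x)\bigr)$, i.e. $\gamma_{i-1}^{n-1}(x)>\gamma_i^n(x)>\gamma_i^{n-1}(x)$ for every $i$ (reading $\gamma_0^{n-1}=+\infty$, and $\gamma_{n/2}^{n-1}=-\infty$ when $n$ is even). Chaining these for $i=1,2,\dots$ yields $\gamma_1^n(x)>\gamma_1^{n-1}(x)>\gamma_2^n(x)>\gamma_2^{n-1}(x)>\cdots$, which is exactly strict interlacing on $]-1,b[$; both parities are handled uniformly, since $\tilde{G}_{n-1}$ has $\lfloor(n-1)/2\rfloor$ roots and, when $n$ is even, the extra (most negative) root $\gamma_{n/2}^n$ of $\tilde{G}_n$ falls below all roots of $\tilde{G}_{n-1}$, exactly as required of the interlacing of a degree-$\lfloor n/2\rfloor$ and a degree-$(\lfloor n/2\rfloor-1)$ polynomial. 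Together with the coincidence at $x=-1$ this proves the lemma.

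The main obstacle is the degeneracy at $x=-1$: there the two root sets are glued pair by pair, so continuity alone only yields weak interlacing, and the crux is to determine the direction in which each coincident pair opens. That is what the sign computation above does, and it is delicate because several signs must be tracked simultaneously and consistently: the leading $z$-coefficients of $\tilde{G}_n$ and of $\tilde{G}_{n-1}$ (which is why the sign of $(2x)^n$, i.e.\ the restriction $x<0$, enters), the alternation of $\partial_z\tilde{G}_n$ across successive roots, the positivity of $\alpha$ and of $n+2\gamma_i^n(x)-1$ on $I$ (the same point that makes Lemma~\ref{simpleroots} work, and the reason the lower bound on $\gamma_i^n$ is used), and the even/odd distinction in the definition of $\alpha$. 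A lesser point to watch is that the monotonicity $(\gamma_i^n)'>0$ and the no-shared-root property are needed on all of $]-1,b[$, not merely near $-1$; but those are precisely Corollary~\ref{extendedmonotonicitygegenbauer} and Lemma~\ref{simpleroots}. One could alternatively break the tie at $x=-1$ by Taylor expansion, comparing the leading nonvanishing derivatives $d^i\gamma_i^n/dx^i$ and $d^i\gamma_i^{n-1}/dx^i$ at $x=-1$ furnished by Lemma~\ref{derivdeepG}, but the sign argument is cleaner and sidesteps that comparison.
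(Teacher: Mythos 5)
Your approach is correct but genuinely different from the paper's. Both proofs use the same boundary data at $x=-1$ (the coincidence $\gamma_i^n(-1)=\gamma_i^{n-1}(-1)=1/2-i$ and unit spacing) and both rely on Lemma~\ref{simpleroots} to forbid crossings on $]-1,b[$. The difference is how the degenerate tie at $x=-1$ is broken. The paper works \emph{locally}: it computes the leading nonvanishing $x$-derivatives $d^i\gamma_i^n/dx^i|_{x=-1}$ and $d^i\gamma_i^{n-1}/dx^i|_{x=-1}$ explicitly, proves the former dominates, and uses Taylor expansion to get the strict ordering near $x=-1$, then extends by continuity. You instead work \emph{globally}: you evaluate the three-term recurrence identity $(1-x^2)\alpha(z)\partial_x\tilde G_n=-nx\,\alpha(z)\tilde G_n+(n+2z-1)\tilde G_{n-1}$ at $z=\gamma_i^n(x)$, deduce the sign of $\tilde G_{n-1}(x,\gamma_i^n(x))$ from the sign of $\partial_x\tilde G_n(x,\gamma_i^n(x))=-(\gamma_i^n)'(x)\,\partial_z\tilde G_n(x,\gamma_i^n(x))$ (using monotonicity from Corollary~\ref{extendedmonotonicitygegenbauer} and the sign of the leading coefficient $(2x)^n/n!$), and match that sign against the sign pattern of $\tilde G_{n-1}(x,\cdot)$ between its consecutive roots. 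Your sign bookkeeping is consistent: $\operatorname{sign}\tilde G_{n-1}(x,\gamma_i^n(x))=(-1)^{n+i}$, which equals the sign of $\tilde G_{n-1}$ on $(\gamma_i^{n-1},\gamma_{i-1}^{n-1})$; and the positivity of the prefactor $(1-x^2)\alpha(\gamma_i^n(x))/(n+2\gamma_i^n(x)-1)$ follows from exactly the lower bound $\gamma_i^n(x)>1/2-\lfloor n/2\rfloor$ already used in Lemma~\ref{simpleroots}. Your approach buys a cleaner argument that avoids the fairly heavy explicit derivative computations and the $o((x+1)^i)$ Taylor asymptotics of the paper, at the modest cost of a parity-plus-pigeonhole step to conclude that the $i$-th root of $\tilde G_n$ lands in the $i$-th gap of $\tilde G_{n-1}$ rather than a farther gap of the same parity; that step deserves one more sentence (the interval indices $j_i$ satisfy $j_i\equiv i\pmod 2$, are strictly increasing because consecutive indices differ in parity, and are bounded above by $\lfloor(n-1)/2\rfloor+1$, which forces $j_i=i$). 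Alternatively, the nearness of $\gamma_i^n(x)$ to $\gamma_i^{n-1}(-1)$ for $x$ just above $-1$, combined with the sign $(-1)^{n+i}$, already selects the correct adjacent gap, and the no-crossing Lemma~\ref{simpleroots} then propagates the choice across $]-1,b[$; either way the gap is easily closed. One minor point: you should also check the final inequality $\gamma_{n/2}^n(x)>-\infty$ for $n$ even (the extra, most negative root of $\tilde G_n$), but this is vacuous, and you do note it. Overall: correct, and arguably a cleaner route than the paper's.
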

\begin{proof}
Let us write 
\begin{align*}
\tilde{G_n}(x,z)&=\frac{ (2x)^{n}}{n!}   \prod_{i=1}^{\lfloor n/2\rfloor}\big(z- \gamma_i^n(x) \big)   &\text{and}&& \tilde{G}_{n-1}(x,z)&= \frac{ (2x)^{n-1}}{(n-1)!}   \prod_{i=1}^{\lfloor (n-1)/2\rfloor}\big(z- \gamma_i^{n-1}(x) \big).
\end{align*}
Notice that if $n$ is odd, the  polynomials $\tilde{G}_n(x,z)$ and  $\tilde{G}_{n-1}(x,z)$ are of the same degree, but that for $n$ even, $\tilde{G}_n(x,z)$ has one more root. If n is odd, we will show that for all $x \in I$,  all $i \leq  (n-1)/2-1$,  $\gamma_i^n(x) > \gamma_i^{n-1}(x) > \gamma_{i+1}^n(x)$, and $ \gamma_{(n-1)/2}^n(x) > \gamma_{(n-1)/2}^{n-1}(x) $ . If $n$ is even, we will show that for all $x \in I$,  all $i \leq  n/2-1$,  $\gamma_i^n(x) > \gamma_i^{n-1}(x) > \gamma_{i+1}^n(x)$.  \\

We first check the property locally, that is in a neighborhood of $-1$, above $-1$.
We have (see computations from Lemma~\ref{derivdeepG}):
\begin{align*}
\frac{d^i \gamma_i^n(x)}{dx^i}_{|_{x=-1}}&= - \frac{\partial_x^i\tilde{G_n}}{\partial_z\tilde{G_n}} \big(-1,\gamma_i(-1)\big)\\
& =  - \frac{ (-1)^{n+i}  \frac{{2^{n} \binom{n}{i}} i!}{n!}\prod_{j=i}^{\lceil n/2\rceil-2}\big(j-(i-1)\big)  \prod_{j=2\lceil n/2\rceil-1}^{n+i-1}\big((j-1)/2- (i-1)\big)}{ \frac{2^{n}}{n!} (-1)^{n+i-1}\prod_{j=0}^{i-2}\big((i-1)- j \big)\prod_{j=i}^{\lfloor n/2\rfloor-1}\big(j-(i-1)\big) }.\\
\end{align*}
Assume $n$ is even, then:
\begin{align*}
\frac{d^i \gamma_i^n(x)}{dx^i}_{|_{x=-1}}&=  \frac{ \binom{n}{i} i!\prod_{j=i}^{ n/2-2}\big(j-(i-1)\big)  \prod_{j=n-1}^{n+i-1}\big((j-1)/2- (i-1)\big)}{ \prod_{j=0}^{i-2}\big((i-1)- j \big)\prod_{j=i}^{n/2-1}\big(j-(i-1)\big) },\\
\frac{d^i \gamma_i^{n-1}(x)}{dx^i}_{|_{x=-1}}&= \frac{ \binom{n-1}{i} i!\prod_{j=i}^{ n/2-2}\big(j-(i-1)\big)  \prod_{j=n-1}^{n+i-2}\big((j-1)/2- (i-1)\big)}{ \prod_{j=0}^{i-2}\big((i-1)- j \big)\prod_{j=i}^{n/2-2}\big(j-(i-1)\big) }.
\end{align*}
So that:
\begin{align*}
\frac{d^i \gamma_i^n(x)}{dx^i}_{|_{x=-1}}&= \frac{d^i \gamma_i^{n-1}(x)}{dx^i}_{|_{x=-1}} \frac{ \binom{n}{i} } { \binom{n-1}{i}}  \frac{\big((n+i-2)/2 - (i-1)\big)}{\big(n/2-1 - (i-1)\big)}.
\end{align*}
Similarly, if $n$ is odd, then:
\begin{align*}
\frac{d^i \gamma_i^n(x)}{dx^i}_{|_{x=-1}}&=  \frac{ \binom{n}{i} i!\prod_{j=i}^{ (n+1)/2-2}\big(j-(i-1)\big)  \prod_{j=n}^{n+i-1}\big((j-1)/2- (i-1)\big)}{ \prod_{j=0}^{i-2}\big((i-1)- j \big)\prod_{j=i}^{(n-1)/2-1}\big(j-(i-1)\big) },\\
\frac{d^i \gamma_i^{n-1}(x)}{dx^i}_{|_{x=-1}}&= \frac{ \binom{n-1}{i} i!\prod_{j=i}^{ (n-1)/2-2}\big(j-(i-1)\big)  \prod_{j=n-2}^{n+i-2}\big((j-1)/2- (i-1)\big)}{ \prod_{j=0}^{i-2}\big((i-1)- j \big)\prod_{j=i}^{(n-1)/2-1}\big(j-(i-1)\big) }.\\
\end{align*}
So that again:
\begin{align*}
\frac{d^i \gamma_i^n(x)}{dx^i}_{|_{x=-1}}&= \frac{d^i \gamma_i^{n-1}(x)}{dx^i}_{|_{x=-1}} \frac{ \binom{n}{i} } { \binom{n-1}{i}}  \frac{ \big((n+i-2)/2 - (i-1)\big)}{ \big(n-2)/2-(i-1)\big) }.
\end{align*}

As $\frac{ \binom{n}{i} } { \binom{n-1}{i}}  \frac{ \big((n+i-2)/2 - (i-1)\big)}{ \big(n-2)/2-(i-1)\big) } >1$, we conclude that for all $ 1\leq i \leq \lfloor (n-1)/2\rfloor$:

\[
\frac{d^i \gamma_i^n(x)}{dx^i}_{|_{x=-1}} >  \frac{d^i \gamma_i^{n-1}(x)}{dx^i}_{|_{x=-1}}. 
\]
As   $\gamma_i^n(-1)=    \gamma_i^{n-1}(-1)=  \gamma_i(-1)=  -1/2- (i-1)$, we can do a Taylor expansion around $x=-1$:
\[
\gamma_i^n(x)= \gamma_i(-1)+ \frac{(x+1)^i}{i!}\frac{d^i \gamma_i^n(x)}{dx^i}_{|_{x=-1}} + o( (x+1)^i),
\]
\[
  \gamma_i^{n-1}(x)= \gamma_i(-1)+ \frac{(x+1)^i}{i!}\frac{d^i \gamma_i^{n-1}(x)}{dx^i}_{|_{x=-1}} + o( (x+1)^i).
  \]
For  $1\leq i \leq \lfloor (n-1)/2\rfloor$,  it is then clear that in a neighborhood of $-1$ and above $-1$, $\gamma_i^n(x) > \gamma_i^{n-1}(x) $. \\
In the case  $i \leq  (n-1)/2-1$ if $n$ is odd and  $i \leq n/2-1$ if $n$ is even, as $\gamma_i^{n-1}(-1) - \gamma_{i+1}^n(-1)=1$, we also get  $\gamma_i^{n-1}(x) > \gamma_{i+1}^n(x)$ in a neighborhood of $-1$.   \\
Now as for all $i$, $\gamma_i^n(x)$, $\gamma_i^{n-1}(x)$ and $\gamma_{i+1}^n(x)$ are continuous functions of $x$, if by contradiction such inequalities were to fail for an $x \in I$, then there would exist $x_0$ and $i$ such that $\gamma_i^n(x_0)= \gamma_i^{n-1}(x_0)$ or $\gamma_i^{n-1}(x_0)=  \gamma_{i+1}^n(x_0)$. But then  this would mean that $\gamma_i^{n-1}(x_0)$ is a root of $G_n(x_0,z)$ and  $G_{n-1}(x_0,z)$, which is impossible by  Lemma~\ref{simpleroots}. Therefore we conclude that the inequalities above 
hold for all $x \in I$.  

\end{proof}

\begin{lemma}\label{interlacing} Consider an interval $I=[-1,b[$ such  that  $\tilde{G}_n(x,z)$ has simple real roots in $z$ on $I$, then the same will be true of $\partial_x\tilde{G}_n(x,z)$ and the roots of the two polynomials interlace strictly on $]-1,b[$ .  
\end{lemma}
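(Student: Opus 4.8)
The plan is to pin down all $m:=\lfloor n/2\rfloor$ roots in $z$ of $\partial_x\tilde{G}_n(x,\cdot)$ by a sign-change argument, leaning on the monotonicity of the branches $\gamma_i$ already established. Write $c(x)=\frac{(2x)^{n}}{n!}$ for the leading coefficient of $\tilde{G}_n(x,z)$ in $z$ (see Definition~\ref{defngtilde}), so that $\partial_x\tilde{G}_n(x,z)$ has leading coefficient $c'(x)=\frac{2n(2x)^{n-1}}{n!}$ in $z$; since $I\subseteq[-1,0[$ we have $x<0$, hence $c'(x)\neq0$ and $\partial_x\tilde{G}_n(x,\cdot)$ has degree exactly $m$ throughout $]-1,b[$. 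Let $\gamma_1(x)>\dots>\gamma_m(x)$ be the simple real roots of $\tilde{G}_n(x,\cdot)$; by the implicit function theorem (as in Lemma~\ref{implicit}) they are smooth on $]-1,b[$ and
\[
\gamma_i'(x)=-\frac{\partial_x\tilde{G}_n\big(x,\gamma_i(x)\big)}{\partial_z\tilde{G}_n\big(x,\gamma_i(x)\big)},
\]
the denominator being nonzero because the roots are simple.

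First I would record the sign of $\partial_x\tilde{G}_n$ along the branches. By Corollary~\ref{extendedmonotonicitygegenbauer}, $\gamma_i'(x)>0$ on $]-1,b[$, so $\partial_x\tilde{G}_n(x,\gamma_i(x))$ and $\partial_z\tilde{G}_n(x,\gamma_i(x))$ have opposite nonzero signs; and since $\tilde{G}_n(x,\cdot)=c(x)\prod_j(z-\gamma_j(x))$ is real-rooted with simple roots, $\operatorname{sgn}\partial_z\tilde{G}_n(x,\gamma_i(x))=(-1)^{i-1}\operatorname{sgn}c(x)$. Hence $\operatorname{sgn}\partial_x\tilde{G}_n(x,\gamma_i(x))$ alternates with $i$, and by the intermediate value theorem $\partial_x\tilde{G}_n(x,\cdot)$ has a root strictly inside each gap $\big(\gamma_{i+1}(x),\gamma_i(x)\big)$ for $i=1,\dots,m-1$. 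That accounts for $m-1$ of its (necessarily simple, distinct) roots.

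The crux is to locate the last root; unlike for an ordinary derivative, $\partial_x$ does not lower the degree in $z$, so it is not automatic. For this I would compare signs at the far left: the previous step gives $\operatorname{sgn}\partial_x\tilde{G}_n(x,\gamma_m(x))=(-1)^{m}\operatorname{sgn}c(x)$, whereas as $z\to-\infty$ one has $\partial_x\tilde{G}_n(x,z)\sim c'(x)z^{m}$, of sign $(-1)^{m}\operatorname{sgn}c'(x)=(-1)^{m+1}\operatorname{sgn}c(x)$ --- the last equality because $x<0$ forces $\operatorname{sgn}c'(x)=-\operatorname{sgn}c(x)$. These signs are opposite, so $\partial_x\tilde{G}_n(x,\cdot)$ has a root in $(-\infty,\gamma_m(x))$. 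Call the roots thus produced $\delta_1(x)>\dots>\delta_m(x)$. We have exhibited $m$ real roots of a polynomial of degree $m$ in $z$, so they are all of them and are simple, $\partial_x\tilde{G}_n(x,\cdot)$ is real-rooted on $I$, and they are arranged as $\gamma_1>\delta_1>\gamma_2>\dots>\gamma_m>\delta_m$, which is exactly the strict interlacing of $\partial_x\tilde{G}_n(x,\cdot)$ with $\tilde{G}_n(x,\cdot)$.

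I expect this third step to be the main obstacle: the monotonicity results deliver the sign alternation almost for free, but the ``extra'' root has to be chased down explicitly, and its placement to the left of $\gamma_m$ rests on carefully tracking leading-coefficient signs and parities together with the hypothesis $x<0$. (Alternatively one can avoid Corollary~\ref{extendedmonotonicitygegenbauer} by evaluating the identity $(1-x^2)\alpha(z)\,\partial_x\tilde{G}_n=-nx\,\alpha(z)\,\tilde{G}_n+(n+2z-1)\,\tilde{G}_{n-1}$ at $z=\gamma_i(x)$, which yields $\operatorname{sgn}\partial_x\tilde{G}_n(x,\gamma_i(x))=\operatorname{sgn}\tilde{G}_{n-1}(x,\gamma_i(x))$ and reduces the alternation to the interlacing of $\tilde{G}_{n-1}$ with $\tilde{G}_n$ proved just above; the last root must still be found as above.)
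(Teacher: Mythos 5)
Your proof is correct, and it takes a genuinely different route from the paper's. The paper converts the claim into an interlacing between $\tilde{G}_n(x,z)$ and a \emph{shifted lower-order} polynomial by invoking the Gegenbauer identity $\partial_x G_n(x,z)=2z\,G_{n-1}(x,z+1)$, which after cancelling the trivial $z$-factors becomes $\partial_x\tilde{G}_n(x,z)=2(z+n/2)\,\tilde{G}_{n-1}(x,z+1)$ (or $2\tilde{G}_{n-1}(x,z+1)$ for odd $n$); it then verifies the resulting root inequalities locally near $x=-1$ by Taylor expansion and propagates them across $I$ by continuity via Lemma~\ref{simpleroots}. You instead run a self-contained sign-change count: the alternation of the sign of $\partial_x\tilde{G}_n(x,\gamma_i(x))$ with $i$ drops out of the monotonicity $\gamma_i'(x)>0$ from Corollary~\ref{extendedmonotonicitygegenbauer} combined with the standard sign pattern of $\partial_z\tilde{G}_n$ at its simple roots, which yields $m-1$ roots in the gaps, and the missing $m$-th root --- not automatic here since $\partial_x$ does not lower the $z$-degree --- is pinned down below $\gamma_m$ by observing that the $z$-leading coefficients $c(x)=(2x)^n/n!$ and $c'(x)$ of $\tilde{G}_n$ and $\partial_x\tilde{G}_n$ have opposite signs \emph{precisely because} $x<0$. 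What the paper's route buys is an explicit parametrization of the roots of $\partial_x\tilde{G}_n$ as $\gamma_i^{n-1}(x)-1$ plus possibly a constant, which is reused later when patching in the trivial roots to pass from $\tilde{G}_n$ to $G_n$ and $\hat{G}_n$; what your route buys is economy, since it needs neither the recurrence identity nor the local Taylor analysis at $x=-1$, and the leading-coefficient observation is a neat replacement for the degree-drop argument one would use for an ordinary $\partial_z$-derivative. Your parenthetical alternative (deducing the sign alternation from the three-term identity evaluated at $\gamma_i(x)$, tying it to the already-proved $\tilde{G}_{n-1}$ interlacing) is also valid and brings the argument a step closer in spirit to the paper's, though the last-root hunt via leading coefficients is still yours.
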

\begin{proof} We bring ourselves back to a variant of the previous theorem by using the equality valid for $|x|<1$ (see Equations~(4.7.27) in \cite{Szego}):
\[
\partial_x G_{n} (x,z) = 2zG_{n-1} (x,z+1).
\]
As \begin{align*}
\partial_x G_{n} (x,z) &= \Big[\prod_{j=0}^{\lceil n/2\rceil-1}(z+j) \Big]\partial_x \tilde{G}_{n} (x,z)   &  G_{n-1} (x,z+1) &= \Big[\prod_{j=0}^{\lceil (n-1)/2\rceil-1}(z+j+1) \Big] \tilde{G}_{n-1} (x,z+1),
\end{align*}
we get
\[
\partial_x \tilde{G}_{n} (x,z) = \frac{ \prod_{j=0}^{\lceil (n-1)/2\rceil-1}(z+j+1)}{ \prod_{j=0}^{\lceil n/2\rceil-1}(z+j)}2z \tilde{G}_{n-1} (x,z+1) 
\]
and
 \begin{align*}
\partial_x \tilde{G}_{n} (x,z) &= 2 (z+n/2)\tilde{G}_{n-1}(x,z+1) \text{\; \;if $n$ is even,}  & \partial_x \tilde{G}_{n} (x,z) &= 2 \tilde{G}_{n-1}(x,z+1)\text{\; \;  if $n$ is odd}.
\end{align*}

If $n$ is even,  $\tilde{G}_{n} (x,z)$ has $n/2$ roots, one more root than $\tilde{G}_{n-1}(x,z)$ in $z$, and we want to show that for all $i \leq n/2-1$, 
\begin{align*}
 \gamma_i^n(x) &> \gamma_i^{n-1}(x) -1> \gamma_{i+1}^n(x)      \text{\; \;\;\;and}   &  \gamma_{n/2}^n(x) > -n/2.
\end{align*}
If $n$ is odd  $\tilde{G}_{n} (x,z)$ has the same number of roots than $\tilde{G}_{n-1}(x,z)$, and we want to show that for all $i \leq (n-1)/2 -1$,
\begin{align*}
 \gamma_i^n(x) &> \gamma_i^{n-1}(x) -1> \gamma_{i+1}^n(x)       \text{\; \;\;\;and}  &   \gamma_{(n-1)/2}^n(x) > \gamma_{(n-1)/2}^{n-1}(x) -1.
\end{align*}

We have directly $\gamma_{n/2}^n(x)= 1/2-\lfloor n/2\rfloor >-n/2 $.

First we check the other inequalities in a neighborhood of $-1$, $x>-1$. Let us consider $i$ in the proper interval according to the parity of $n$. We can verify that the inequality $\gamma_i^n(x) > \gamma_i^{n-1}(x) -1$ is true in a neighborhood of $-1$ as $\gamma_i^n(-1) = \gamma_i^{n-1}(-1) $. The nontrivial inequalities are $\gamma_i^{n-1}(x) -1> \gamma_{i+1}^n(x)$ for $x>-1$. We have equality at $-1$ as $\gamma_i^n(-1)=\gamma_i^{n-1}(-1):= \gamma_i(-1)$ and    $\gamma_i(-1) -1= \gamma_{i+1}(-1)$. Then we look at the Taylor expansions around $x=-1$:
\[
\gamma_i^{n-1}(x)-1= \gamma_{i+1}(-1)+ \frac{(x+1)^i}{i!}\frac{d^i \gamma_i^{n-1}(x)}{dx^i}_{|_{x=-1}} + o( (x+1)^i),   
\]
\[
    \gamma_{i+1}^{n}(x)= \gamma_{i+1}(-1)+\frac{ (x+1)^{i+1}}{(i+1)!}\frac{d^{i+1} \gamma_{i+1}^{n}(x)}{dx^{i+1}}_{|_{x=-1}} + o( (x+1)^{i+1}).
\]
It is clear then that locally $\gamma_i^{n-1}(x)-1>  \gamma_{i+1}^{n}(x)$ as $(x+1)^{i+1} <<(x+1)^i$.\\
 We extend the inequalities to the whole interval $I$ by noticing again that if the inequalities were to fail, then there would have to exist an index $j$ and an $x$ by continuity such that $\gamma_j^n(x) = \gamma_j^{n-1}(x) -1$ or  $\gamma_j^{n-1}(x) -1= \gamma_{j+1}^n(x)$.  This would mean $\partial_x \tilde{G_{n}} (x,\gamma_j^{n}(x))=0$ and as $ \tilde{G}_{n} (x,\gamma_j^{n}(x))=0$, we would  get a contradiction by Lemma~\ref{simpleroots}.
Notice that we proved in the process that $\tilde{G}_{n-1}(x,z+1)$ and  $\tilde{G}_{n}(x,z)$ interlace strictly too. 

\end{proof}

\subsection{Ordinary differential equation on the roots and global extension}

\begin{lemma}The local Gegenbauer property is in fact true over the whole interval:  $\tilde{G_n}(x,z)$ is real rooted in $z$ with simple (distinct) roots for $x\in[-1,0[$ , and they are all increasing to $+\infty$ when $x \to 0$. 

\end{lemma}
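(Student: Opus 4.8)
The plan is a connectedness argument on $[-1,0[$: the interlacing relations keep the roots from colliding, and the explicit leading coefficient of $\tilde{G_n}(x,\cdot)$ in $z$ keeps them from escaping to infinity before $x=0$. First I would set $b^{*}$ to be the supremum of all $b\in(-1,0]$ for which $\tilde{G_n}(x,z)$ has $\lfloor n/2\rfloor$ simple real roots in $z$ for every $x\in[-1,b)$. The value $\tilde{G_n}(-1,z)=\frac{2^{n}}{n!}(-1)^{n}\prod_{i=1}^{\lfloor n/2\rfloor}(z+1/2+i-1)$ and Lemma~\ref{implicit} give $b^{*}>-1$, and on $]-1,b^{*}[$ the roots $\gamma_i(x)$ are $C^{\infty}$ (Lemma~\ref{implicit}), strictly increasing (Corollary~\ref{extendedmonotonicitygegenbauer}), hence bounded below by $\gamma_i(-1)=-1/2-(i-1)\ge-(\lfloor n/2\rfloor-1/2)$. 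Assume for contradiction that $b^{*}<0$. On the compact interval $[-1,b^{*}]$ the leading coefficient of $\tilde{G_n}(x,\cdot)$ in $z$, namely $(2x)^{n}/n!$, is bounded away from $0$, while the lower-order coefficients are polynomials in $x$; hence each elementary symmetric function of the $\gamma_i(x)$ — a quotient of one of these coefficients by $(2x)^{n}/n!$ — is bounded on $[-1,b^{*})$, and in particular $\sum_i\gamma_i(x)$ is bounded. Combined with the lower bounds, this makes every $\gamma_i(x)$ bounded, so, being increasing, it converges to a finite limit $\mu_i$ as $x\to(b^{*})^{-}$; by continuity $\tilde{G_n}(b^{*},\mu_i)=0$, and $\mu_i\ge\gamma_i(x_0)>-(\lfloor n/2\rfloor-1/2)$ for any $x_0\in(-1,b^{*})$.

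Next I would rule out coincidences among the $\mu_i$. If $\mu_i=\mu_{i+1}$ for some $i\le\lfloor n/2\rfloor-1$, then by the strict interlacing of $\tilde{G}_{n-1}$ with $\tilde{G_n}$ on $]-1,b^{*}[$ (Lemma~\ref{interlacing}, which applies on every $[-1,b[$ with $b<b^{*}$, where $\tilde{G_n}$ has simple real roots in $z$) a root $\gamma_i^{n-1}(x)$ of $\tilde{G}_{n-1}(x,\cdot)$ lies strictly between $\gamma_i^{n}(x)$ and $\gamma_{i+1}^{n}(x)$; squeezing gives $\gamma_i^{n-1}(x)\to\mu_i$, hence $\tilde{G}_{n-1}(b^{*},\mu_i)=0$. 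But this is exactly what Remark~\ref{extremeequality} excludes, its proof applying verbatim since $\mu_i>-(\lfloor n/2\rfloor-1/2)$. So the $\mu_i$ are pairwise distinct, and $\tilde{G_n}(b^{*},z)$ — of degree exactly $\lfloor n/2\rfloor$ in $z$, as $b^{*}\ne0$ — has $\lfloor n/2\rfloor$ distinct simple real roots; Lemma~\ref{implicit} applied at $a=b^{*}$ (on a neighbourhood not containing $0$, where the $z$-degree is constant) then makes simple real-rootedness persist past $b^{*}$, contradicting the definition of $b^{*}$. Hence $b^{*}\ge0$, so $\tilde{G_n}(x,z)$ is real-rooted in $z$ with simple distinct roots for every $x\in[-1,0[$, and these roots are increasing on $]-1,0[$ by Corollary~\ref{extendedmonotonicitygegenbauer}.

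Finally I would determine $\lim_{x\to0^{-}}\gamma_i^{n}(x)$, which exists in $\mathbb{R}\cup\{+\infty\}$ by monotonicity. If $n$ is even, $\tilde{G_n}$ is a genuine bivariate polynomial with $\tilde{G_n}(0,z)=\frac{(-1)^{n/2}}{(n/2)!}$, a nonzero constant (divide $G_n(0,z)=\frac{(-1)^{n/2}}{(n/2)!}\prod_{j=0}^{n/2-1}(z+j)$ by $\prod_{j=0}^{n/2-1}(z+j)$); if some $\gamma_i^{n}(x)$ converged to a finite $L$, continuity of $\tilde{G_n}$ would give $0=\lim_{x\to0^{-}}\tilde{G_n}(x,\gamma_i^{n}(x))=\tilde{G_n}(0,L)\ne0$, which is absurd, so each $\gamma_i^{n}(x)\to+\infty$. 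If $n$ is odd, then $n-1$ is even and $\lfloor(n-1)/2\rfloor=\lfloor n/2\rfloor$, so by the inductive hypothesis all roots $\gamma_i^{n-1}(x)$ of $\tilde{G}_{n-1}$ tend to $+\infty$; the strict interlacing $\gamma_i^{n}(x)>\gamma_i^{n-1}(x)$ (Lemma~\ref{interlacing}) then forces $\gamma_i^{n}(x)\to+\infty$ for every $i$, closing the induction.

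The hard part is the boundedness estimate giving $b^{*}\ge0$: a priori an increasing branch could run off to $+\infty$ at an interior point $b^{*}<0$, and the argument only works because $(2x)^{n}/n!$, the leading coefficient of $\tilde{G_n}(x,\cdot)$ in $z$, stays bounded away from $0$ on $[-1,b^{*}]$ — it is precisely at $x=0$ that this coefficient vanishes and the roots are released to $+\infty$. Everything else is continuity bookkeeping on top of Lemma~\ref{implicit}, Corollary~\ref{extendedmonotonicitygegenbauer}, Lemma~\ref{interlacing}, and Remark~\ref{extremeequality}.
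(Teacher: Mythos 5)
Your proof is correct, and it takes a cleaner route than the paper while relying on the same core facts. The paper packages the continuation argument via an ODE $\frac{dz}{dx}=F_n(x,z)$ with $F_n=-\partial_x\tilde{G_n}/\partial_z\tilde{G_n}$ and invokes Picard's theorem to produce maximal intervals of existence, then analyses the two escape scenarios (blow-up, or $\partial_z\tilde{G_n}$ vanishing) separately. You instead take the supremum $b^*$ of the good interval and run everything through Lemma~\ref{implicit}: the coefficient/lower-bound argument (which the paper also uses) shows each $\gamma_i$ has a finite limit $\mu_i$ at $b^*$, strict interlacing with $\tilde{G}_{n-1}$ plus Remark~\ref{extremeequality} show the $\mu_i$ are distinct, and then the implicit function lemma pushes the good interval past $b^*$. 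This replaces the Picard machinery (and the dichotomy between blow-up and domain exit) with a single uniform argument, at no loss of rigour since Lemma~\ref{implicit} already carries exactly the local continuation you need. Your endgame for $x\to 0^-$ is also marginally simpler in the even case: you apply the ``$\tilde{G}_n(0,z)$ is a nonzero constant'' argument to every root at once, whereas the paper uses it only for the extra root $\gamma_{n/2}$ and bounds the others by $\tilde{G}_{n-1}$-roots via interlacing. Both give the same conclusion; your version just has fewer moving parts.
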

\begin{proof}
Denote by $F_n(x,z):=  - \frac{\partial_x\tilde{G_n}}{\partial_z\tilde{G_n}} \big(x,z\big)$. Consider a  rectangular domain $D$ such that $\partial_z\tilde{G_n}(x,z)$ is nonzero on it. 
 $F_n$ is continuous in $x$  and $z$ in the the domain $D$.
Indeed, it is a rational fraction whose denominator is nonzero and it is therefore $C^{\infty}$ in both variables by theorem of composition.  As we saw before,  $\tilde{G_n}(-1,z)$ is realrooted in $z$ with simple roots, $\partial_z\tilde{G_n}(-1,\gamma_i(-1)) \neq 0$ and by continuity we can find  small rectangles $D_i:=  [-1,-1+\epsilon] \times [\gamma_i(-1), \gamma_i(-1)+ \delta]$ such that $\partial_z\tilde{G_n}(x,z)$ is nonzero on $D_i$. A strong version of Picard's theorem tells us that there are maximal intervals $I_i^{max}= ]-1,0[ \cap[-1, \eta^{max}_i[$ for which the roots $\gamma_i(x)$  ($i=1,2\dots\lfloor n/2 \rfloor$) are the unique solutions of the initial value ODE
\[
\frac{dz}{dx}= F_n(x,z),    \;  \; \; \; \; \; \;  \;  z(-1)= -1/2- (i-1).
\]
Note that on $I_i^{max}$, $\partial_z\tilde{G_n}(x,\gamma_i(x)) \neq 0$ (the denominator is nonzero, so that the differential equation is well defined). 

Let us consider the smallest interval $I^{max}=I_{i_0}^{max}=]-1,b[$, with $b=\eta^{max}_{i_0}=\inf _{i=1\dots n}\eta^{max}_i$, so that $I^{max} \subset I_i^{max}$ for all $i$, and let us prove that $b=0$.\\
By Corollary~\ref{extendedmonotonicitygegenbauer}, $F_n(x,\gamma_i(x))>0$ for all $i$ on $I_{max}$ as all roots are distinct (nonzero denominator) and real (by definition of the intervals $I^i_{max}$).
According to Picard's theorem,  we either have $ \gamma_{i_0}(x)  \rightarrow_{x \to b} +\infty$ (explosion necessarily to +$\infty$ as the roots are increasing), or $b$ is such that $ \lim_{x \to b} F_n( x,\gamma_{i_0}(x))$ is not well defined (we leave the domain of definition). 
Let us prove that  $I^{max}=[-1,0[$  and that there is explosion when $x \to 0$ (roots going to infinity).\\

Assume by contradiction that $b<0$. \\
 Explosion can't happen if  $b<0$. Indeed, write  $G_n(x,z)= \frac{(2x)^n}{n!}z^n +P_{n-1}(x)z^{n-1} +...  = \prod_{j=0}^{\lceil n/2\rceil-1}(z-\mu_j) \frac{(2x)^{n}}{n!} \prod_{i=1}^{\lfloor n/2 \rfloor}\big(z-\gamma_i(x)\big) $  where $P_{n-1}(x)$ is a polynomial of degree $n-1$ as well as the coefficient of $z^{n-1}$ in the expansion of $G_n(x,z)$. We then get by identifying coefficients of $z^{n-1}$ on both sides:   
\[
\sum_i{\gamma_i(x)} + \sum_j{\mu_j} = \frac{-P_{n-1}(x)}{\frac{(2x)^{n}}{n!}}
\]
The right hand side is bounded, and as a result  $\gamma_{i_0}(x)$ can't diverge. \\

The alternative is that we leave the domain of definition. It happens if $\lim_{x \to b} \partial_z\tilde{G_n}  \big(x,\gamma_{i_0}(x)\big) =0$. We have seen that $\partial_z\tilde{G_n}(b,z)$ would be of degree  $\lfloor n/2 \rfloor-1 $ in $z$. By assumption, $\lim _{x \to b}\gamma_{i_0}(x)= \mu$ where $\mu$ is a root of  $\partial_z\tilde{G_n}(b,z)$. We can extend by continuity $\gamma_{i_0}(x)$ at $x=b$ with $\gamma_{i_0}(b) = \mu$. We check by continuity that $\tilde{G_n}\big(b,\gamma_{i_0}(b)\big)= \partial_z\tilde{G_n}\big(b,\gamma_{i_0}(b)\big )=0$ so that in fact $\gamma_{i_0}(b)$ is a real double root in $z$ of $\tilde{G_n} \big(b,z\big)$. Now it means that either $ \gamma_{i_0+1}(b) = \gamma_{i_0}(b)$ or $\gamma_{i_0-1}(b) = \gamma_{i_0}(b)$. Using the fact that for $x<b$, $\gamma_{i_0}(x) > \gamma_{i_0}(x) >\gamma_{i_0+1}(x) $, and then using Lemma~\ref{interlacing} (valid for $x<b$): $\gamma_{i_0-1}(x) >\gamma^{n-1}_{i_0-1}(x) > \gamma_{i_0}(x) >\gamma^{n-1}_{i_0}(x) > \gamma_{i_0+1}(x) $  \big(we recall that $\gamma_{i_0}(x)=\gamma^{n}_{i_0}(x)$\big), and then by continuity we obtain either $\gamma^{n-1}_{i_0-1}(b)= \gamma_{i_0}(b)$ or  $\gamma^{n-1}_{i_0}(b)= \gamma_{i_0}(b)$. Either way, using Remark~\label{extremeequality}, we would get a contradiction as we have a shared root for $\tilde{G_n}\big(b,z)$ and  $\tilde{G}_{n-1}\big(b,z)$. \\
 Therefore, we have necessarily $b=0$ and $\eta^{max}_i=0$ for all $i=1\dots\lfloor n/2 \rfloor$. \\

Let us prove that $\gamma_i(x)\to_{x \to 0} +\infty$ for all $i$. \\
Assume $n$ is odd. Then for $x<0$, $\tilde{G_n}\big(x,z)$ and $\tilde{G}_{n-1}\big(x,z)$ are both of degree $(n-1)/2$ and by the interlacing property from Lemma~\ref{interlacing}, $\gamma_{i}(x) >\gamma^{n-1}_{i}(x)$ for all $i \leq (n-1)/2$, and as by induction  $lim_{x \to 0}\gamma^{n-1}_{i}(x)= +\infty$, we get the result.\\
Now, if $n$ is even, for $x<0$, $\tilde{G_n}\big(x,z)$ is of degree $n/2$ and $\tilde{G}_{n-1}\big(x,z)$ of degree $n/2-1$, and consequently we can use the same argument as when $n$ is odd for all $i<n/2$ (roots will be lower bounded by roots of $\tilde{G}_{n-1}$), but we can't conclude for $\gamma_{n/2}(x)$. Hopefully, we have in this case $G_n(0,z)=\frac{(-1)^{n/2}}{(n/2)!} \prod_{j=0}^{n/2-1}(z+j)$ so that $\tilde{G}_n(0,z)=\frac{(-1)^{n/2}}{(n/2)!}$. Assume by contradiction that we don't have $lim_{x \to 0}\gamma_{n/2}(x)= +\infty$. As $\gamma_{n/2}(x)$ is monotonous for $x \in ]-1,0[$, then we have necessarily that  $\lim_{x \to 0}\gamma_{n/2}(x)= \mu$ exists and is finite. By continuity we have $\tilde{G_n}(0,\mu)=0$. Given the actual value (constant, nonzero), this is absurd.\\

\end{proof}

\subsection{Putting results together and extending them to modified Gegenbauer families}
Recall that  $G_n(x,z)= \prod_{j=0}^{\lceil n/2\rceil-1}(z+j) \tilde{G_n}(x,z)$, so it is clear that for $x \in[-1,0[$, $G_n(x,z)$ is realrooted in $z$ and as $\tilde{G_n}(x,z)$ only has simple roots by the above, there can be at most  double roots. As $\gamma_{i}(x)$  ranges continuously from $1/2-i$ to $+\infty$, there is a double root when it crosses  $-j$ for $0\leq j \leq \lceil n/2\rceil-1$. \\
For  $x \in]0,1]$,  we get from the equality $G_n(-x,z)= (-1)^nG_n(x,z)$, that $\gamma_{i}(-x)$ (for $i\leq \lfloor n/2\rfloor$) is a root of  $\tilde{G_n}(x,z)$ , and it is decreasing in $x$ by composition. Basically, the roots are the same, we are just reversing the interval. We can deduce right away from this  Theorem~\ref{Gegenbauerz}. We also get that for  $x \in[-1,0[ \cup  ]0,1]$,  $\tilde{G_n}(x,z)$ has $\lfloor n/2 \rfloor$ simple real roots in $z$ and interlaces  $\tilde{G}_{n-1}(x,z)$ and $\partial_x\tilde{G}_n(x,z)$ in $z$.  
To get  Theorem~\ref{dualinterlG}, we need to patch constant roots and check if interlacing still holds. We have: $\partial_xG_n(x,z)= \partial_x\tilde{G_n}(x,z) \prod_{j=0}^{\lceil n/2\rceil-1}(z+j)$. So the interlacing of $G_n(x,z)$ and $\partial_xG_n(x,z)$ follows directly as we just add the same set of roots to  $\tilde{G_n}(x,z)$ and  $\partial_x\tilde{G_n}(x,z)$. 
In the case of $G_{n-1}(x,z)$ if $n$ is even then we again add the same constant roots, which doesn't affect interlacing. If $n$ is odd, then there is an additional constant root $ -(n+1)/2+1$ for $G_{n}(x,z)$. On the other hand, $\tilde{G_n}(x,z)$ and $\tilde{G}_{n-1}(x,z)$  have the same number of roots and the lowest root in the interlacing is $\gamma^{n-1}_{(n-1)/2}(x)$. We just have to check that $\gamma^{n-1}_{ (n-1)/2}(x) \geq -(n+1)/2+1$. But this is clear as $\gamma^{n-1}_{(n-1)/2}(x)\geq \gamma^{n-1}_{(n-1)/2}(-1)= -n/2+1$. \\

Let us finally deduce results for the derived family $\hat{G}_n(x,z)$. We have : 
\[
\frac{\prod_{i=0}^{n-1}(z+1/2+i)}{\prod_{i=0}^{n-1}(z+i/2)}= \frac{\prod_{i=\lfloor n/2\rfloor}^{n-1}(z+1/2+i)}{\prod_{i=0}^{\lceil n/2\rceil-1}(z+i)}, 
\]

therefore:
 \[
\hat{G}_n(x,z)=  \frac{\prod_{i=0}^{n-1}(z+1/2+i)}{2^n\prod_{i=0}^{n-1}(z+i/2)} G_n^{(z)}(x)=\prod_{i=\lfloor n/2\rfloor}^{n-1}(z+1/2+i) \frac{\tilde{G_n}(x,z)}{2^n}.
\]
Notice that the number of constant roots in $z$ of $\hat{G}_n(x,z)$ is the same as for $G_n^{(z)}(x)$ (that is $\lceil n/2\rceil$), and the nonconstant roots are the same $\gamma_i(x)$, $i\leq \lfloor n/2\rfloor$, verifying: $\gamma_i(x) \geq  1/2-i \geq 1/2-\lfloor n/2\rfloor > -1/2-\lfloor n/2\rfloor$. They are above all the constant roots in  $ \prod_{i=\lfloor n/2\rfloor}^{n-1}(z+1/2+i)$, and consequently the roots are always simple. Corollary~\ref{Gegenbauerzmod} follows directly. \\
As for the interlacing properties of $\hat{G}_n(x,z)$, interlacing with $\partial_x\hat{G}_n(x,z)$ is again straightforward. In the case of $\hat{G}_{n}(x,z)$ and $\hat{G}_{n-1}(x,z)$, if $n$ is even then $\tilde{G}_{n-1}(x,z)$ and $\tilde{G}_{n}(x,z)$ have degrees $n/2-1$ and $n/2$ with lowest roots in the chain of interlacing verifying: $\gamma^{n}_{n/2}(x)< \gamma^{n-1}_{ n/2-1}(x)$. The constant root belonging to $\hat{G}_n(x,z)$ and not  $\hat{G}_{n-1}(x,z)$ is $-1/2-(n-1)$, and the constant root belonging to  $\hat{G}_{n-1}(x,z)$ and not  $\hat{G}_{n}(x,z)$ is $-1/2-(n/2-1)$. As we have $-1/2-(n-1)<-1/2-(n/2-1)<\gamma^{n}_{n/2}(x)< \gamma^{n-1}_{n/2-1}(x)$, interlacing still holds. If $n$ is odd,  then $\tilde{G}_{n-1}(x,z)$ and $\tilde{G}_{n}(x,z)$ both have degree $(n-1)/2$ and the lowest root in the interlacing chain is $\gamma^{n-1}_{ (n-1)/2}(x)$. The only constant root not shared by  $\hat{G}_n(x,z)$ and  $\hat{G}_{n-1}(x,z)$ belongs to $\hat{G}_n(x,z)$ and is equal to $ -1/2-(n-1)<\gamma^{n-1}_{ (n-1)/2}(x)$. As a result, the interlacing follows, and Corollary~\ref{dualinterlGmodified}.

\section{Applications to realrootedness in $x$ of derivatives in $z$}

The main application of realrootedness in $z$ will show up through Laguerre's inequality.
\begin{lemma}[Laguerre's inequality] \label{Laguerrein}
For a polynomial $p(z)$ with simple real roots, then 
\[
p''(z)p(z)-p'(z)^2 <0.
\]
If there are multiple roots, then we only have
\[
p''(z)p(z)-p'(z)^2  \leq 0.
\]
\end{lemma}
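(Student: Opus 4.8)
The plan is to reduce the inequality to the partial-fraction expansion of the logarithmic derivative of $p$. Write $p(z)=c\prod_{i=1}^{n}(z-r_i)^{m_i}$ with $c\neq 0$, the $r_i$ distinct real numbers and $m_i\geq 1$ (with all $m_i=1$ in the simple-root case); I may assume $n\geq 1$, since the statement concerns nonconstant $p$. On $\mathbb{R}\setminus\{r_1,\dots,r_n\}$ one has $p'(z)/p(z)=\sum_{i=1}^{n} m_i/(z-r_i)$, and differentiating this relation gives
\[
\frac{p''(z)p(z)-p'(z)^2}{p(z)^2}=-\sum_{i=1}^{n}\frac{m_i}{(z-r_i)^2},
\]
hence
\[
p''(z)p(z)-p'(z)^2=-p(z)^2\sum_{i=1}^{n}\frac{m_i}{(z-r_i)^2}.
\]
The left-hand side is a polynomial, and so is the right-hand side, since each summand $p(z)^2 m_i/(z-r_i)^2$ contains the factor $(z-r_i)^{2m_i}$ with $2m_i\geq 2$ and so has no pole; thus the displayed identity holds at every real $z$, including the roots of $p$.

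Next I would read off the sign. If $z$ is real and not a root of $p$, then $p(z)^2>0$ and $\sum_{i} m_i/(z-r_i)^2>0$, so $p''(z)p(z)-p'(z)^2<0$. If $r_j$ is a simple root, then $p(r_j)=0$ while $p'(r_j)\neq 0$, so the left-hand side equals $-p'(r_j)^2<0$; this settles the simple-root case and gives the strict inequality for every $z\in\mathbb{R}$. If instead $r_j$ has multiplicity $m_j\geq 2$, then $p(r_j)=p'(r_j)=0$, so $p''(r_j)p(r_j)-p'(r_j)^2=0$, which is precisely why in the presence of multiple roots one can only assert $\leq 0$.

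The argument is elementary, and the single point that needs care is the passage from the partial-fraction expansion, which is literally valid only off the roots of $p$, to an honest polynomial identity valid everywhere — handled by the pole-cancellation remark above, or, alternatively, by continuity (the inequality $\leq 0$ holds on a dense open set and both sides of the identity are continuous, while strictness at the simple roots is verified directly as in the previous paragraph). Beyond this bookkeeping I do not anticipate any genuine obstacle.
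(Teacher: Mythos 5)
The paper states Lemma~\ref{Laguerrein} without any proof, treating it as the classical Laguerre inequality, so there is no in-paper argument to compare against. Your proof is correct and is the standard one: the partial-fraction expansion $p'/p=\sum_i m_i/(z-r_i)$, differentiated, gives $\bigl(p''p-(p')^2\bigr)/p^2=-\sum_i m_i/(z-r_i)^2$, and then the sign is read off directly, with the care you took to verify that the resulting polynomial identity holds at the roots themselves (either by noting the pole cancellation $p^2/(z-r_i)^2$, or by the continuity-plus-direct-check argument). Your observation that simple roots still give strict inequality because $-p'(r_j)^2<0$, while a multiple root forces equality, is exactly what distinguishes the two cases in the statement. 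One small point worth keeping in mind, which you already flagged: the strict form requires $p$ nonconstant (for constant $p$ both sides vanish), and the statement implicitly assumes $p$ has all roots real; your factorization $p=c\prod(z-r_i)^{m_i}$ with real $r_i$ uses this.
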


\subsection{Laguerre case}
The goal of this subsection is to prove Theorem~\ref{LaguerreD}. Consider in the following $z>-1$.
We show inductively the following property: for $0 \leq k \leq n$, $\partial_{z}^{k}L_n(x,z)$ is realrooted with distinct roots of degree $n-k$, with roots increasing in $z$. Furthermore, for $k\geq1$, the roots of $\partial_{z}^{k}L_n(x,z)$ interlace strictly the roots of $\partial_{z}^{k-1}L_n(x,z)$. 
The case $k=0$ is pretty clear using Theorem~\ref{monoroots}. Let us assume it is true up to $k$ by induction. 

\begin{lemma} \label{initialLaguerre}
The $n-(k+1)$ roots of  $\partial_z^{k+1}L_n(x,z)$ in $x$ are real  and interlace strictly those of $\partial_z^{k}L_n(x,z)$ in $x$.
\end{lemma}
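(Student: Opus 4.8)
The plan is to combine the inductive hypothesis (that $\partial_z^k L_n(x,z)$ has $n-k$ distinct real roots in $x$, increasing in $z$) with realrootedness in $z$ at a fixed value of $x$, and to exploit Laguerre's inequality (Lemma~\ref{Laguerrein}). First I would fix $x_0 \in [0,+\infty[$ and observe that, by Corollary~\ref{Laguerrez}, the univariate polynomial $z \mapsto L_n(x_0,z)$ has $n$ distinct real roots; hence so do all of its derivatives $\partial_z^j L_n(x_0,z)$ for $j \le n$, and in particular $\partial_z^k L_n(x_0,z)$ has simple real roots as a polynomial in $z$. Applying Laguerre's inequality in the strict form to the polynomial $z \mapsto \partial_z^k L_n(x_0,z)$ gives, at every real $z_0$,
\[
\partial_z^{k+2}L_n(x_0,z_0)\,\partial_z^{k}L_n(x_0,z_0) - \big(\partial_z^{k+1}L_n(x_0,z_0)\big)^2 < 0 .
\]
Since $x_0$ and $z_0$ were arbitrary, this inequality holds identically on $[0,+\infty[\times(-1,\infty)$ (strictly, since $L_n(x_0,\cdot)$ is always simple-rooted, even at $x_0=0$ by the Remark after Corollary~\ref{Laguerrez}).

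Next I would use this pointwise inequality together with the inductive description of the roots in $x$. Let $x_1(z) > x_2(z) > \dots > x_{n-k}(z)$ be the roots in $x$ of $\partial_z^k L_n(x,z)$. At each $x = x_i(z)$ the Laguerre inequality degenerates to
\[
\big(\partial_z^{k+1}L_n(x_i(z),z)\big)^2 > 0 ,
\]
so $\partial_z^{k+1}L_n(x,z)$ does not vanish at any root of $\partial_z^k L_n(x,z)$; moreover, by the analogue argument applied with $\partial_z^{k+1}$ in place of $\partial_z^{k+1}$, its sign alternates as $i$ runs from $1$ to $n-k$. Concretely, differentiating $\partial_z^k L_n(x_i(z),z)=0$ in $z$ gives $\partial_x\partial_z^k L_n \cdot x_i'(z) + \partial_z^{k+1}L_n = 0$ at $(x_i(z),z)$, and since $x_i'(z)>0$ by the inductive hypothesis and $\partial_x\partial_z^k L_n$ alternates sign at consecutive roots $x_i(z)$ (the derivative in $x$ of a real-rooted polynomial changes sign at its roots), the quantity $\partial_z^{k+1}L_n(x_i(z),z)$ has sign $(-1)^i$ up to a global sign. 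A standard sign-change count then forces $\partial_z^{k+1}L_n(x,z)$, which has degree $n-k-1$ in $x$, to have a root strictly between each consecutive pair $x_{i+1}(z) < x_i(z)$; that accounts for all $n-k-1$ of its roots, they are simple, and they interlace those of $\partial_z^k L_n(x,z)$ strictly.

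Finally I would record that the interlacing is genuinely strict (no shared roots), which is exactly what the strict form of Laguerre's inequality bought us: $\partial_z^{k+1}L_n$ cannot vanish at a root of $\partial_z^k L_n$, so no inequality in the interlacing chain collapses. The degree count $\deg_x \partial_z^{k+1}L_n(x,z) = n-k-1$ follows from the explicit expansion in Definition~\ref{defnfamily}: each $\partial_z$ lowers the $x$-degree by exactly one because the top coefficient $(-1)^n/n!$ of $x^n$ in $L_n(x,z)$ is constant in $z$ while the coefficient of $x^{n-1}$ is a degree-one polynomial in $z$, and inductively the same pattern propagates. The main obstacle is the bookkeeping of signs: one must make sure the global sign ambiguity in $\partial_x\partial_z^k L_n(x_i(z),z)$ and the sign of the leading $x$-coefficient of $\partial_z^{k+1}L_n$ are tracked consistently so that the sign-alternation argument yields exactly $n-k-1$ interior roots and not fewer; everything else is a routine application of Laguerre's inequality and the implicit function theorem.
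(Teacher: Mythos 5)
Your proof is correct and follows essentially the same route as the paper: you differentiate $\partial_z^k L_n(x_i(z),z)=0$ in $z$ to get $\partial_z^{k+1}L_n = -\partial_x\partial_z^k L_n\cdot x_i'(z)$ at the roots, use the inductive monotonicity $x_i'(z)>0$ together with the sign alternation of $\partial_x\partial_z^k L_n$ at the simple roots $x_i(z)$, count sign changes, and match against the degree $n-k-1$ in $x$ to place all roots of $\partial_z^{k+1}L_n$ strictly between consecutive $x_i(z)$. The preliminary appeal to Laguerre's inequality is harmless but redundant for this lemma---the alternating-sign argument already forces $\partial_z^{k+1}L_n(x_i(z),z)\neq 0$---and in the paper Laguerre's inequality is reserved for the following lemma on the monotonicity in $z$ of the new roots $\tilde{x}_i(z)$.
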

\begin{proof}
We have
 \begin{align}
\partial_z^{k+1}L_n(x,z)= \sum_{i=0}^n \frac{(-1)^i \partial_z^{k+1}\big[ \prod_{j=i+1}^{n} (z+j)\big]}{i!(n-i)!} x^i.
\end{align}
 The factor $\big[ \prod_{j=i+1}^{n} (z+j)\big]$ is of degree $n-i$ so the largest $i$ such that differentiating $k+1$ times gives a nonzero term is $i= n-(k+1)$. Therefore the degree of  $\partial_z^{k+1}L_n(x,z)$ in $x$ is $n-(k+1)$. Consider the $n-k$ real roots $x_i(z)$ of $L_n(x,z)$ (that exist by induction).
 By differentiating $\partial_z^{k}L_n(x_i(z),z)=0$ with respect to $z$, we get
\begin{align}
 \partial_z^{k+1}L_n(x_i(z),z)= -\partial_x\partial_z^{k}L_n(x_i(z),z) \frac{d}{dz}x_i(z).
\end{align}
On the one hand, using induction, we have $\frac{d}{dz}x_i(z)> 0$. On the other hand, as roots $x_i(z)$ are distinct, the derivative $\partial_x\partial_z^{k}L_n(x_i(z),z) \neq 0$ changes sign when we increment $i$. We get that  $\partial_z^{k+1}L_n(x_i(z),z)$ changes sign $n-k-1$ times and consequently using the intermediate value theorem for $\partial_z^{k+1}L_n(x,z)$ we have $n-(k+1)$ real zeros strictly between zeros of $L_n(x,z)$, a fortiori all distinct.
In the end, all the zeros of   $\partial_z^{k+1}L_n(x,z)$ have been found and are interlacing strictly with the zeros of $\partial_z^{k}L_n(x,z)$.
\end{proof}

Now let us prove that the roots of  $\partial_z^{k+1}L_n(x,z)$ are also monotonous. Assume $k<n-1$, otherwise there is no root to consider. 
\begin{lemma}\label{initialrootLaguerre}
  The roots $\tilde{x}_i(z)$, $i=1\dots n-(k+1)$, of $\partial_z^{k+1}L_n(x,z)$ are such that for all $z$, $
\frac{d \tilde{x_i}}{dz} > 0$.
  \end{lemma}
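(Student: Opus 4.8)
The plan is to obtain $\tfrac{d\tilde x_i}{dz}$ as a ratio of consecutive $z$-derivatives of $L_n$ at the moving root and to sign it by combining Laguerre's inequality (Lemma~\ref{Laguerrein}), made available by the $z$-realrootedness of Corollary~\ref{Laguerrez}, with the interlacing of Lemma~\ref{initialLaguerre}. First, by Lemma~\ref{initialLaguerre} the polynomial $\partial_z^{k+1}L_n(\cdot,z)$ has degree exactly $n-(k+1)$ with $n-(k+1)$ simple real roots for every $z>-1$, so the implicit function theorem (Lemma~\ref{implicit}, with the roles of $x$ and $z$ reversed; the relevant partial derivative is nonzero by simplicity) makes each $\tilde x_i(z)$ a $C^\infty$ function of $z$, and differentiating $\partial_z^{k+1}L_n(\tilde x_i(z),z)=0$ in $z$ gives
\[
\frac{d\tilde x_i}{dz}=-\,\frac{\partial_z^{k+2}L_n\big(\tilde x_i(z),z\big)}{\partial_x\partial_z^{k+1}L_n\big(\tilde x_i(z),z\big)},
\]
with nonvanishing denominator. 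So the task reduces to showing the numerator and denominator have opposite signs.

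The sign of the numerator I would extract from Laguerre's inequality \emph{in the variable $z$}. Fix $z>-1$ and set $x_0=\tilde x_i(z)$; note $x_0>0$, since by iterated strict interlacing the roots of $\partial_z^{k+1}L_n(\cdot,z)$ lie strictly between the extreme roots of $L_n(\cdot,z)$, which are positive. By Corollary~\ref{Laguerrez}, $w\mapsto L_n(x_0,w)$ has $n$ distinct real roots, hence so does its $k$-th derivative $w\mapsto\partial_z^k L_n(x_0,w)$ (Rolle, $k$ times); applying Lemma~\ref{Laguerrein} to the latter at $w=z$, where its first derivative $\partial_z^{k+1}L_n(x_0,z)$ vanishes, gives
\[
\partial_z^{k+2}L_n(x_0,z)\cdot\partial_z^k L_n(x_0,z)<0 .
\]
So it remains only to check that $\partial_z^k L_n(\tilde x_i(z),z)$ and $\partial_x\partial_z^{k+1}L_n(\tilde x_i(z),z)$ have the \emph{same} sign.

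This last point is a routine sign count. From the expansion in Definition~\ref{defnfamily}, $\partial_z^k L_n(\cdot,z)$ and $\partial_z^{k+1}L_n(\cdot,z)$ have leading coefficients $\tfrac{(-1)^{n-k}}{(n-k)!}$ and $\tfrac{(-1)^{n-k-1}}{(n-k-1)!}$, of opposite sign, while by Lemma~\ref{initialLaguerre} the $i$-th largest root of $\partial_z^{k+1}L_n(\cdot,z)$ has exactly $i$ roots of $\partial_z^k L_n(\cdot,z)$ and $i-1$ roots of $\partial_z^{k+1}L_n(\cdot,z)$ strictly above it; writing both polynomials in factored form then gives $\operatorname{sign}\partial_z^k L_n(\tilde x_i(z),z)=\operatorname{sign}\partial_x\partial_z^{k+1}L_n(\tilde x_i(z),z)=(-1)^{n-k+i}$. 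Combined with the two displays above, $\tfrac{d\tilde x_i}{dz}>0$. The only genuinely delicate step is this sign bookkeeping; the rest is a direct application of the implicit function theorem and of Laguerre's inequality, and it bears stressing that here it is the $z$-realrootedness of $L_n$ from Corollary~\ref{Laguerrez}, not its classical $x$-realrootedness, that makes Laguerre's inequality usable.
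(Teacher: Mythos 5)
Your proof is correct. The reduction to the formula
$\frac{d\tilde x_i}{dz}=-\partial_z^{k+2}L_n/\partial_x\partial_z^{k+1}L_n$ at $x=\tilde x_i(z)$ is the same formula the paper arrives at (the paper differentiates the quotient $\partial_z^{k+1}L_n/\partial_z^{k}L_n$, but at a zero of the numerator the $\partial_z^{k}L_n$ factors cancel and one is left with exactly your expression), and you both invoke Laguerre's inequality on $\partial_z^{k}L_n(\tilde x_i,\cdot)$ in the variable $z$ to handle the numerator. Where you genuinely diverge is in signing the denominator: the paper writes the quotient as a partial-fraction sum $\sum_j \frac{dx_j}{dz}(x-x_j)^{-2}$ and invokes the inductive monotonicity $\frac{dx_j}{dz}>0$ of the roots of $\partial_z^{k}L_n$, whereas you bypass that hypothesis entirely, reading off $\operatorname{sign}\partial_x\partial_z^{k+1}L_n(\tilde x_i,z)=\operatorname{sign}\partial_z^{k}L_n(\tilde x_i,z)=(-1)^{n-k+i}$ from the leading coefficients $(-1)^{n-k}/(n-k)!$ and $(-1)^{n-k-1}/(n-k-1)!$ together with the strict interlacing of Lemma~\ref{initialLaguerre}. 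Your sign count is elementary and, within this lemma, self-contained given Lemma~\ref{initialLaguerre} alone; the paper's partial-fraction step is what extends more mechanically to the Gegenbauer case, where the positive and negative roots contribute terms of opposite sign and a naive count would be messier. One small presentational caution: your deduction that $\operatorname{sign}\partial_z^k L_n=\operatorname{sign}\partial_x\partial_z^{k+1}L_n$ at $\tilde x_i$ depends on the convention that $\tilde x_1$ is the largest root (so the largest root of $\partial_z^{k}L_n$ sits above $\tilde x_1$); this is consistent and correct, but worth stating since the paper never fixes an ordering.
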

  \begin{proof}
  As by strict interlacing $\partial_z^{k}L_n(\tilde{x_i}(z),z) \neq 0$, we can write:
\[
   \frac{\partial_z^{k+1}L_n}{\partial_z^{k}L_n}(\tilde{x_i}(z),z)=0,
\]
which leads to
\begin{align}
  \frac {d( \frac{\partial_z^{k+1}L_n}{\partial_z^{k}L_n}(\tilde{x_i}(z),z)\Big)}{dz} = \partial_x \Big(\frac{\partial_z^{k+1}L_n}{\partial_z^{k}L_n}(\tilde{x_i}(z),z)\Big)\frac{d \tilde{x_i}}{dz} + \partial_z \Big( \frac{\partial_z^{k+1}L_n}{\partial_z^{k}L_n}(\tilde{x_i}(z),z)\Big)= 0.
\end{align}
On the one hand, using fraction decomposition by noticing that $\partial_z^{k}L_n(x,z)$ has simple roots, and that the degree of $\partial_z^{k+1}L_n$ is one less than $\partial_z^{k}L_n$,
\[
\frac{\partial_z^{k+1}L_n}{\partial_z^{k}L_n}(x,z)=  \sum_{j=1}^{n-k} \frac {\partial_z^{k+1}L_n}{\partial_x\partial_z^{k}L_n}(x_j,z) \frac{1}{ x-x_j},
\]

so that:
\begin{align}
\partial_x\Big(\frac{\partial_z^{k+1}L_n}{\partial_z^{k}L_n}(x,z)\Big)=   -\sum_{j=1}^{n-k} \frac {\partial_z^{k+1}L_n}{\partial_x\partial_z^{k}L_n}(x_j,z) \frac{1}{( x-x_j)^2} =   \sum_{j=1}^{n-k}  \frac{dx_j}{dz} \frac{1}{( x-x_j)^2} >0.
\end{align}
 using that all $x_j$ are increasing by induction; and evaluating at $x=\tilde{x_i}(z)$ we get:
\[
 \partial_x\Big(\frac{\partial_z^{k+1}L_n}{\partial_z^{k}L_n}(\tilde{x_i}(z),z)\Big)>0.
\]

On the other hand, we know by Theorem~\ref{Laguerrez} that, for $x\in [0,+\infty[$, $\partial_z^{k}L_n(x,z)$ is realrooted with simple roots in $z$ as the $k^{th}$ derivative of a realrooted polynomial with simple roots. Therefore we can apply Laguerre's inequality (\ref{Laguerrein}) and get  $ [\partial_{z}^{k+2}L_n\partial_{z}^{k}L_n-(\partial_{z}^{k+1}L_n)^2 ](\tilde{x_i},z)< 0$ for all $z$, when evaluating at $x=\tilde{x_i}(z)$, which belongs to $[0,+\infty[$ by interlacing of the roots (the roots of the successive polynomials $\partial_{z}^{k}L_n$ are inside the convex hull of roots of $L_n$ ). It follows that 
\[
\partial_z \Big( \frac{\partial_{z}^{k+1}L_n}{\partial_{z}^{k}L_n}\Big)(\tilde{x_i}(z),z)  < 0. 
\]
We conclude by gathering the two inequalities. \\
\end{proof}
The induction is complete and Theorem~\ref{LaguerreD} is proven.

\subsection{Gegenbauer case}
In this subsection we prove Theorem~\ref{GegenbauerD}. Consider $z>-1/2$.
We have
\
\begin{align}
\hat{G}_n(-x,z)&=(-1)^n\hat{G}_n(x,z)  &    \partial_z\hat{G}_n(-x,z)&=(-1)^n\partial_z\hat{G}_n(x,z) .
\end{align}
So that if we there is a root $\tilde{y}(z)$ of $\partial_z\hat{G}_n(x,z)$ in $x$, then its symmetric counterpart $-\tilde{y}(z)$ will also be a root of  $\partial_z\hat{G}_n(x,z)$. In other terms, $\partial_z\hat{G}_n(x,z) $ and more generally , $\partial_z^k\hat{G}_n(x,z) $ have symmetric roots in $x$.

We show inductively that for $0\leq k \leq n$, $\partial_{z}^{k}\hat{G}_n(x,z)$  is realrooted of degree $n$ with $\lfloor n/2 \rfloor$ positive roots for $k \leq n- \lfloor n/2 \rfloor$, and for larger $k$, $n-k $ positive roots (for $k \geq  n- \lfloor n/2 \rfloor$, zero roots add two by two when $k$ increases by one). Also we prove that these positive roots are decreasing with $z$, and that for $k \geq 1$, the positive roots of  $\partial_{z}^{k}\hat{G}_n(x,z)$  interlace strictly the roots of $\partial_{z}^{k-1}\hat{G}_n(x,z)$ by below (that is the largest root in module belongs to $ \partial_{z}^{k-1}\hat{G}_n(x,z)$).

The initial case $k=0$ is already well known, as $\hat{G}_n(x,z)$ is of degre $n$ and has $\lfloor n/2 \rfloor$ positive roots that are decreasing with $z$ using Theorem~\ref{monoroots}. Let us assume it is true up to $0\leq k<n$. 

\begin{lemma} 
The  roots of  $\partial_z^{k+1}\hat{G}_n(x,z)$ of degree $n$ in $x$ are real  and the positive ones interlace  strictly those of $\partial_z^{k}\hat{G}_n(x,z)$ by below (that is the largest root in module belongs to $ \partial_z^{k}\hat{G}_n(x,z)$). There are $\lfloor n/2 \rfloor$ positive roots for $k+1 \leq n- \lfloor n/2 \rfloor$. For $k+1> n- \lfloor n/2 \rfloor$, we have $n-(k+1)$ positive roots. 
\end{lemma}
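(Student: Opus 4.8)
The plan is to mimic closely the strategy of Lemma~\ref{initialLaguerre} from the Laguerre case, using monotonicity of the roots in $z$ together with the intermediate value theorem, and then to keep track of how zero roots of $\partial_z^k\hat G_n$ behave under differentiation. First I would note that $\partial_z^{k+1}\hat G_n(x,z)$ still has degree exactly $n$ in $x$: indeed each monomial $x^{n-2j}$ in $\hat G_n$ carries a coefficient which is a polynomial in $z$ of degree $n-j\ge n-\lfloor n/2\rfloor\ge k+1$ (since $k+1\le n$), so no top-degree term can be killed by $z$-differentiation; symmetry $\partial_z^{k+1}\hat G_n(-x,z)=(-1)^n\partial_z^{k+1}\hat G_n(x,z)$ is inherited from $\hat G_n$, so it suffices to locate the positive roots and invoke symmetry for the negative ones (and parity of $n$ forces a root at $x=0$ when $n$ is odd).

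Next, by the inductive hypothesis $\partial_z^k\hat G_n(x,z)$ has real roots in $x$, symmetric, with some number $m$ of positive roots $y_1(z)>\dots>y_m(z)>0$ that are strictly decreasing in $z$ (and the remaining roots are $0$ or the mirror images). Differentiating the identity $\partial_z^k\hat G_n\big(y_i(z),z\big)=0$ in $z$ gives
\[
\partial_z^{k+1}\hat G_n\big(y_i(z),z\big)=-\,\partial_x\partial_z^{k}\hat G_n\big(y_i(z),z\big)\,\frac{d}{dz}y_i(z).
\]
Since the positive roots $y_i(z)$ are simple (strict interlacing from the inductive hypothesis), $\partial_x\partial_z^k\hat G_n\big(y_i(z),z\big)\neq0$ and this quantity alternates in sign as $i$ runs over $1,\dots,m$; and $\tfrac{d}{dz}y_i(z)<0$ by hypothesis. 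Hence $\partial_z^{k+1}\hat G_n\big(y_i(z),z\big)$ alternates in sign as well, so by the intermediate value theorem $\partial_z^{k+1}\hat G_n(x,z)$ has a root strictly between each consecutive pair $y_{i+1}(z)<\cdot<y_i(z)$, giving $m-1$ positive roots that strictly interlace the $y_i$ from below. Combined with the $m-1$ symmetric negative roots, the forced root at $0$ when $n$ is odd, and the multiplicity-$2$ zero roots coming from the fact that a zero root of $\partial_z^k\hat G_n$ of multiplicity $r$ descends to one of multiplicity $r-1$ in $\partial_z^{k+1}\hat G_n$ together with its mirror image, one checks the degree-$n$ count is saturated: this pins down \emph{all} $n$ roots as real and shows that the number of positive roots drops by exactly one precisely when $m$ has already reached the ``central'' regime $m=n-k$, i.e. for $k+1>n-\lfloor n/2\rfloor$ we get $n-(k+1)$ positive roots, while for $k+1\le n-\lfloor n/2\rfloor$ it stays at $\lfloor n/2\rfloor$.

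The main obstacle is the bookkeeping of the zero roots, which is exactly where the modified family $\hat G_n$ (rather than $G_n$) is needed: I have to argue that for $k$ in the range $k\ge n-\lfloor n/2\rfloor$ the polynomial $\partial_z^k\hat G_n(x,z)$ genuinely has a zero of the claimed multiplicity at $x=0$, and that differentiating in $z$ lowers this multiplicity by exactly one (not more), so that the interlacing chain of positive roots and the symmetric negative roots, plus these central zeros, account for all $n$ roots with no room left for complex roots. This amounts to tracking the lowest-degree-in-$x$ surviving monomial and verifying its $z$-coefficient does not vanish at the relevant $z>-1/2$; once the count is tight, realrootedness and the strict interlacing of the positive roots follow as in the Laguerre argument.
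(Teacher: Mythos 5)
Your IVT argument between consecutive positive roots $y_{i+1}(z)<\cdot<y_i(z)$ only produces $m-1$ positive roots, where $m$ is the number of positive roots of $\partial_z^k\hat G_n$. But in the regime $k+1\le n-\lfloor n/2\rfloor$ the lemma asserts $\lfloor n/2\rfloor=m$ positive roots, so you are one short, and your total count ($m-1$ positive, $m-1$ negative, plus a zero when $n$ is odd) is $n-2$, leaving room for a conjugate pair of complex roots. The missing ingredient is a sign comparison near $x=0$: the paper evaluates $\partial_z^{k+1}\hat G_n(0,z)$ (for $n$ even) or the leading behaviour of $\partial_z^{k+1}\hat G_n(x,z)$ as $x\to0^+$ (for $n$ odd), shows its sign is opposite to that of $\partial_z^{k+1}\hat G_n(y_m(z),z)$ obtained from the alternation, and thereby produces one more positive root in $(0,y_m(z))$ --- this is where the hypotheses $z>-1/2$ and $k+1\le n-\lfloor n/2\rfloor$ are used to guarantee the relevant $z$-coefficient is strictly positive. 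Without this step the proof does not close.

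Your claim that ``a zero root of $\partial_z^k\hat G_n$ of multiplicity $r$ descends to one of multiplicity $r-1$ in $\partial_z^{k+1}\hat G_n$'' has the direction backwards. You are differentiating in $z$, not $x$: the coefficient of $x^{n-2j}$ in $\hat G_n$ has $z$-degree $n-j$, so applying $\partial_z^{k+1}$ annihilates every monomial with $n-j<k+1$, i.e.\ $j>n-(k+1)$. Hence as $k$ increases past $n-\lfloor n/2\rfloor$ the lowest surviving power of $x$ climbs, and the multiplicity of $x=0$ \emph{increases by $2$} with each $z$-derivative. This is precisely what makes the count consistent in the second regime: there the IVT gives exactly $n-(k+1)$ positive (and negative) roots, and the two ``missing'' roots are absorbed into the growing central zero, so no extra sign argument is needed. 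If you try to make your stated multiplicity claim precise you will find it false, and the degree-$n$ saturation you appeal to will not work out.
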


\begin{proof} 
We have
\begin{align}\label{gegsignb}
\partial_{z}^{k+1}\hat{G}_n(x,z)=\sum_{j=0}^{\lfloor n/2 \rfloor} (-1)^j \frac{ \partial_z^{k+1}\big[\prod_{i=n-\lfloor n/2 \rfloor}^{n-j-1}(z+i)\prod_{i=\lfloor n/2 \rfloor}^{n-1}(z+1/2+i)\big]}
 {j! (n-2j)!} x^{n-2j}2^{-2j}.
\end{align}
From this expression it follows that $\partial_z^{k+1}\hat{G}_n(x,z)$ is of degree $n$ in $x$. The coefficient of $x^n$ is indeed 
$ \frac{1}{n!} \partial_z^{k+1}\big[\prod_{i=n-\lfloor n/2 \rfloor}^{n-1}(z+i)\prod_{i=\lfloor n/2 \rfloor}^{n-1}(z+1/2+i)  \big]>0$ because $k<n$ and $z>-1/2$. 
\\

Assume first that $k+1\leq n- \lfloor n/2 \rfloor$. 
Denote the positive roots of $\partial_z^{k}\hat{G}_n(x,z)$ by $y_i$, in decreasing order. Then by differentiating the equality $\partial_z^{k}\hat{G}_n(y_i(z),z)=0$ with respect to $z$ we get 
\begin{align}\label{gegsignbis} 
\partial_z^{k+1}\hat{G}_n(y_i(z),z)= -\partial_x \big[\partial_z^{k}\hat{G}_n(y_i(z),z)\big] \frac{d}{dz}y_i.
\end{align}
 Let us distinguish between the even and odd cases.\\
 In the even case, we have (by induction) $n/2$ positive roots for $\partial_z^{k}\hat{G}_n(x,z)$,  $y_1(z)>y_2(z)\dots>y_{n/2}(z)>0$. On the one hand, using induction we have $\frac{d}{dz}y_i(z)< 0$. On the other hand, as the roots $y_i(z)$ are distinct, the derivative $\partial_x\big[\partial_z^{k}\hat{G}_n(y_i(z),z)\big] \neq 0$ changes sign when we increment $i$. We get that  $\partial_z^{k+1}\hat{G}_n(y_i(z),z)$ changes sign $n/2-1$ times using Equation~\ref{gegsignbis} and therefore using the intermediate value theorem  we get $n/2-1$ real zeros $ \tilde{y}_i(z)$ for $\partial_z^{k+1}\hat{G}_n(x,z)$ strictly between zeros of $\partial_z^{k}\hat{G}_n(x,z)$:  $y_1(z)>\tilde{y_1}(z)> y_2(z)>\tilde{y_2}(z)\dots>\tilde{y}_{n/2-1}(z)> y_{n/2}(z)>0$.
 Now there is still one root missing on the positive side. the sign of $\partial_x\partial_z^{k}\hat{G}_n(y_{1},z)$ is $+1$ (at the largest root the derivative is positive), and  $sign \big[\partial_x\partial_z^{k}\hat{G}_n(y_{n/2}(z),z)\big]= (-1)^{n/2-1}$ by the alternation of signs of the derivative from root to root. From there we get that $sign\big[\partial_z^{k+1}\hat{G}_n(y_{n/2}(z),z)\big]= (-1)^{n/2-1}$ using Equation~\ref{gegsignbis}. But we also have $\partial_z^{k+1}\hat{G}_n(0,z)=(-1)^{n/2}\frac{2^{-n}}{n/2!} \partial_z^{k+1} \big[\prod_{i= n/2}^{n-1}(z+1/2+i)\big]$ and for $z>-1/2$, $\partial_z^{k+1} \big[\prod_{i=n/2}^{n-1}(z+1/2+i)\big]>0$ for $z>-1/2$  because the product is of degree $n/2$ and $k+1 \leq n/2$ by assumption. Consequently,  $sign\big[\partial_z^{k+1}\hat{G}_n(0,z)\big]= (-1)^{n/2}$. 
 As  $\partial_z^{k+1}\hat{G}_n(y_{n/2}(z),z)$ and  $\partial_z^{k+1}\hat{G}_n(0,z)$ have opposite signs, there must exist an additional zero $\tilde{y}_{n/2}$ of $\partial_z^{k+1}\hat{G}_n$ in $]0, y_{n/2}(z)[ $. As a consequence we have $n/2$ positive roots. As $\partial_z^{k+1}\hat{G}_n$ is symmetric in $x$, we get $n/2$ additional zeros $-\tilde{y}_{i}(z)$. We get $n$ roots overall. \\
 
  In the odd case $n=2n_0+1$, denote the positive roots of  $\partial_z^{k}\hat{G}_n$ by $y_1(z)>y_2(z)\dots>y_{n_0}(z)>0$. If we collect as above the roots we get by the change of signs in Equation~\ref{gegsignb}, we get $n_0-1$ positive roots for $\partial_z^{k+1}\hat{G}_n$ interlacing with roots of $\partial_z^{k}\hat{G}_n$. We can't use directly the sign change between $0$ and $y_{n_0}$ as $0$ is a root, but we can look for the sign in the neighborhood of the origin. Indeed, we have using Equation~\ref{gegsignb}, $\partial_z^{k+1}\hat{G}_n(x,z) =_{x=0} x (-1)^{n_0} \frac{2^{-2n_0}}{n_0!} \partial_z^{k+1} \big[\prod_{i=n_0}^{2n_0}(z+1/2+i)\big]  + o(x)$. As $k+1\leq n_0+1$ by assumption and $z>-1/2$, $\partial_z^{k+1} \big[\prod_{i=n_0}^{2n_0}(z+1/2+i)\big]>0$. So that for  $x>0$ close to $0$, $sign\big[\partial_z^{k+1}\hat{G}_n(x,z)\big]=(-1)^{n_0} $, and on the other hand  $ sign\big[\partial_z^{k+1}\hat{G}_n(y_{n_0},z)\big]= sign\big[\partial_x\partial_z^{k}\hat{G}_n(y_{n_0},z)\big]= (-1)^{n_0-1} $ by the same sign alternation reasoning as in the even case. As a consequence we get an additional zero for $\partial_z^{k+1}\hat{G}_n$ in $]0,y_{n_0}[ $. Finally we get $n_0=\lfloor n/2 \rfloor$ positive roots. By symmetry, we also have $n_0$ negative roots, and also a root at $0$ which gives $n$ roots. Consequently, all the zeros of $\partial_z^{k+1}\hat{G}_n(x,z)$ have been found and the positive ones are interlacing  the positive zeros of $\partial_z^{k}\hat{G}_n(x,z)$ (strictly).\\

 Consider now the case $k \geq n- \lfloor n/2 \rfloor$. A new phenomenon arises, as the lowest degree terms in $x$ (which are also of lowest degree in $z$) vanish when we apply $\partial_{z}^{k}$ or  $\partial_{z}^{k+1}$. The factor with the lowest degree in $z$ is $\prod_{i=\lfloor n/2 \rfloor}^{n-1}(z+1/2+i)$ (coefficient of $x^{n- 2\lfloor n/2 \rfloor}$), which cancels for $k +1 > n- \lfloor n/2 \rfloor$. More precisely, for  $l= k-(n- \lfloor n/2 \rfloor)$, the $l$ first terms in the expansion of $\hat{G}_n(x,z)$ in $x$ vanish when we apply $\partial_{z}^{k}$. As a result, we have:

   \[
\partial_{z}^{k}\hat{G}_n(x,z)= \sum_{j=0}^{\lfloor n/2 \rfloor-l} (-1)^j \frac{ \partial_z^{k} \big[\prod_{i=n-\lfloor n/2 \rfloor}^{n-j-1}(z+i)\prod_{i=\lfloor n/2 \rfloor}^{n-1}(z+1/2+i)\big]}{j! (n-2j)!} (2x)^{n-2j}
  \]
  and 
   \[
\partial_{z}^{k+1}\hat{G}_n(x,z)= \sum_{j=0}^{\lfloor n/2 \rfloor-l-1} (-1)^j \frac{ \partial_z^{k+1} \big[\prod_{i=n-\lfloor n/2 \rfloor}^{n-j-1}(z+i)\prod_{i=\lfloor n/2 \rfloor}^{n-1}(z+1/2+i)\big]}{j! (n-2j)!} (2x)^{n-2j}.
  \]
  In the case $n$ is even, then the number of (distinct) positive  zeros for $\partial_{z}^{k}\hat{G}_n(x,z)$ is by induction $n-k=n/2-l$, the number of negative zeros is the same by symmetry and  the multiplicity of $0$ is $2l$ (which can be seen on the polynomial expansion). Use of the sign rule (through Equation~\ref{gegsignbis}) leads to  $n/2-l-1$ positive zeros and   $n/2-l-1$ negative zeros for  $\partial_z^{k+1}\hat{G}_n$. But  as the multiplicity of $0$ in  $\partial_z^{k+1}\hat{G}_n$   is $2(l+1)$ (we cancel a term in the summation when differentiating with respect to z) , then we get overall $n$ zeros, and the positive ones interlace strictly those of $\partial_z^{k}\hat{G}_n(x,z)$. \\
   In the odd case, $n=2n_0+1$,  the number of (distinct) positive zeros for $\partial_{z}^{k}\hat{G}_n(x,z)$ is by induction  $n-k=n_0-l$, the number of negative zeros is also $n_0-l$ ( and the multiplicity of $0$ is $2l+1$). Again, use of the sign rule (through Equation~\ref{gegsignbis}) leads to $n_0-l-1$ positive zeros and $n_0-l-1$ negative zeros  for  $\partial_z^{k+1}\hat{G}_n$. As the multiplicity of $0$ is $2(l+1)+1$, we also get overall $n$ zeros whose positive ones interlace strictly those of $\partial_z^{k}\hat{G}_n(x,z)$ . 
   
  \end{proof}

We then need to prove that the positive roots $\tilde{y}_i(z)$ of $\partial_z^{k+1}\hat{G}_n(x,z)$  share the monotonicity property.  

\begin{lemma}
For $1 \leq i \leq \lfloor n/2 \rfloor $ if $k < n- \lfloor n/2 \rfloor$ and $1 \leq i \leq n-(k+1)$ if $k \geq n- \lfloor n/2 \rfloor$, we have
$\frac{d \tilde{y_i}}{dz} < 0$.

\end{lemma}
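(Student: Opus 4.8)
\emph{Plan.} The plan is to run the same three–step argument as in Lemma~\ref{initialrootLaguerre}, adapted to the symmetric setting. Write $g:=\partial_z^{k}\hat G_n$ and $h:=\partial_z^{k+1}\hat G_n=\partial_z g$. By the preceding lemma the positive roots $\tilde y_i(z)$ of $h$ interlace strictly the positive roots of $g$, so $g(\tilde y_i(z),z)\neq 0$ and one may use the identity $\frac{h}{g}\bigl(\tilde y_i(z),z\bigr)=0$. Differentiating in $z$,
\[
\partial_x\Bigl(\tfrac{h}{g}\Bigr)\bigl(\tilde y_i(z),z\bigr)\,\frac{d\tilde y_i}{dz}+\partial_z\Bigl(\tfrac{h}{g}\Bigr)\bigl(\tilde y_i(z),z\bigr)=0,
\]
so it suffices to prove that both partial derivatives of $h/g$ at $\bigl(\tilde y_i(z),z\bigr)$ are strictly negative; then $\frac{d\tilde y_i}{dz}=-\partial_z(h/g)/\partial_x(h/g)$ is the negative of a quotient of two negative numbers, hence is negative.

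For the $x$–derivative I would use partial fractions. Both $g$ and $h$ have degree $n$ in $x$ with positive leading coefficient (established in the preceding lemma), both share the $x$–parity of $n$ by the symmetry $\hat G_n(-x,z)=(-1)^n\hat G_n(x,z)$, and inspection of the explicit $x$–expansions shows that the order of vanishing at $x=0$ of $h$ exceeds that of $g$ by exactly $2$, so $h/g$ has no pole at $0$. Its only poles are therefore simple poles at the nonzero roots $\rho(z)\in\{\pm y_j(z)\}$ of $g$; differentiating the defining relation $g(\rho(z),z)=0$ (cf.\ Equation~\ref{gegsignbis}) gives $h(\rho(z),z)=-\partial_x g(\rho(z),z)\,\rho'(z)$, so the residue of $h/g$ at $\rho$ is $-\rho'(z)$. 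Hence
\[
\partial_x\Bigl(\tfrac{h}{g}\Bigr)(x,z)=\sum_{\rho}\frac{\rho'(z)}{\bigl(x-\rho(z)\bigr)^{2}}=\sum_{j}\frac{dy_j}{dz}\left[\frac{1}{\bigl(x-y_j(z)\bigr)^{2}}-\frac{1}{\bigl(x+y_j(z)\bigr)^{2}}\right],
\]
where I paired each positive root $y_j$ with its mirror $-y_j$ and used $\tfrac{d}{dz}(-y_j)=-\tfrac{dy_j}{dz}$. Evaluating at $x=\tilde y_i(z)>0$: since $y_j>0$ and $\tilde y_i\neq y_j$, one has $0<|\tilde y_i-y_j|<\tilde y_i+y_j$, so each bracket is positive, while $\frac{dy_j}{dz}<0$ by the induction hypothesis; therefore $\partial_x(h/g)\bigl(\tilde y_i(z),z\bigr)<0$.

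For the $z$–derivative, the strict interlacing chain relating $\partial_z^{k+1}\hat G_n,\partial_z^{k}\hat G_n,\dots,\hat G_n$ keeps $\tilde y_i(z)$ inside the convex hull of the $x$–roots of $\hat G_n(\cdot,z)$, so $\tilde y_i(z)\in(0,1)$; in particular $\tilde y_i(z)\neq 0$, and by Corollary~\ref{Gegenbauerzmod} the polynomial $z\mapsto\hat G_n(\tilde y_i(z),z)$ has simple real roots, whence so does its $k$‑th $z$–derivative $z\mapsto g(\tilde y_i(z),z)$ (derivatives of a real‑rooted polynomial with simple roots are again real‑rooted with simple roots). Laguerre's inequality (Lemma~\ref{Laguerrein}) then gives $\bigl[\partial_z^{k+2}\hat G_n\cdot\partial_z^{k}\hat G_n-(\partial_z^{k+1}\hat G_n)^{2}\bigr]\bigl(\tilde y_i(z),z\bigr)<0$, which is exactly $\partial_z(h/g)\bigl(\tilde y_i(z),z\bigr)<0$ after dividing by $g^{2}>0$. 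Combining the two inequalities finishes the induction step, and together with the preceding lemma this proves Theorem~\ref{GegenbauerD}.

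\emph{Main obstacle.} The step I expect to be the real obstacle is the $x$–derivative: in contrast with the Laguerre situation, the roots of $\partial_z^{k}\hat G_n$ occur in $\pm$ pairs whose $z$–derivatives have opposite signs (plus a root at $0$ of possibly large multiplicity), so the partial‑fraction sum is not termwise of one sign. One must exploit the $x$–parity of $\hat G_n$ both to see that $x=0$ contributes no pole and to collapse the sum into the paired form above, after which the elementary estimate $\tfrac{1}{(\tilde y_i-y_j)^{2}}>\tfrac{1}{(\tilde y_i+y_j)^{2}}$ restores the sign. The regime $k\geq n-\lfloor n/2\rfloor$ only requires repeating the vanishing‑order bookkeeping at $x=0$ in that case; the monotonicity conclusion and the Laguerre‑inequality step are unaffected.
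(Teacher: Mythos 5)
Your proposal is correct and follows essentially the same route as the paper: differentiate the relation $(h/g)(\tilde y_i(z),z)=0$ to reduce to showing $\partial_x(h/g)<0$ and $\partial_z(h/g)<0$, obtain the first by a partial-fraction expansion that pairs each positive root with its mirror and uses monotonicity, and obtain the second from Laguerre's inequality (Lemma~\ref{Laguerrein}) via Corollary~\ref{Gegenbauerzmod}; the paper does exactly this, with $\delta_k$ positive poles and the algebraic form $4\tilde y_i\sum_j \frac{(dy_j/dz)\,y_j}{(\tilde y_i-y_j)^2(\tilde y_i+y_j)^2}$ which is just the factored version of your bracketed sum. One small inaccuracy: you assert that the order of vanishing of $h=\partial_z^{k+1}\hat G_n$ at $x=0$ always exceeds that of $g=\partial_z^{k}\hat G_n$ by exactly $2$, but this is only the case for $k\geq n-\lfloor n/2\rfloor$; for $k+1\leq n-\lfloor n/2\rfloor$ the two orders of vanishing are equal (both $n-2\lfloor n/2\rfloor$). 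The conclusion you need, namely that $h/g$ has no pole at $0$, nevertheless holds in both regimes, so the argument is unaffected.
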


\begin{proof}
We can assume that $k<n-1$, for if $k=n-1$, there is no positive root $\tilde{y_i}$. \\
First consider the case where all the roots of  are distinct, that is  $k < n- \lfloor n/2 \rfloor$. 
By assumption, as $\partial_z^{k}\hat{G}_n(\tilde{y_i}(z),z) \neq 0$ (strict interlacing for positive roots),
\[
   \frac{\partial_z^{k+1}\hat{G}_n(\tilde{y_i}(z),z)}{\partial_z^{k}\hat{G}_n(\tilde{y_i}(z),z)}=0
\]
which leads to
\begin{align}
\frac{d \tilde{y_i}}{dz} =-\frac{ \partial_z \Big( \frac{\partial_{z}^{k+1}\hat{G}_n}{\partial_z^{k}\hat{G}_n}(\tilde{y_i}(z),z)\Big)}{\partial_x \Big(\frac{\partial_{z}^{k+1}\hat{G}_n}{\partial_z^{k}\hat{G}_n}(\tilde{y_i}(z),z)\Big)}.
\end{align}
If we denote by $\delta_k \geq 1$ the number of positive roots of $\partial_{z}^{k}\hat{G}_n(x,z)$, which is equal to $\lfloor n/2 \rfloor$ or $n-k$ according to $k$, and if $y_j$, $j\leq \delta_k$ represent the distinct positive roots of $\partial_{z}^{k}\hat{G}_n(x,z)$, we have by fraction decomposition (as the degrees of $\partial_{z}^{k+1}\hat{G}_n$ and $\partial_{z}^{k}\hat{G}_n$ are both equal to $n$):
\[
\frac{\partial_{z}^{k+1}\hat{G}_n}{\partial_{z}^{k}\hat{G}_n}(x,z)=  c(z)+  \sum_{j=1}^{\delta_k}\frac {\partial_{z}^{k+1}\hat{G}_n}{\partial_x\partial_{z}^{k}\hat{G}_n}(y_j,z) \frac{1}{x-y_j} +   \sum_{j=1}^{\delta_k}\frac {\partial_{z}^{k+1}\hat{G}_n}{\partial_x\partial_{z}^{k}\hat{G}_n}(-y_j,z) \frac{1}{x+y_j},
\]
where $c(z)$ is the quotient of the two coefficients of $x^n$ in our polynomials. 
The zero roots (with possibly multiplicity greater than 1) indeed cancel out in the fraction because $\partial_{z}^{k+1}\hat{G}_n(x,z)$ has $0$ as a root with multiplicity higher than $\partial_{z}^{k}\hat{G}_n(x,z)$, so that we don't have to consider $0$ as a pole in the fraction decomposition. 
We can differentiate this equality in $x$:
\[
\partial_x\Big(\frac{\partial_{z}^{k+1}\hat{G}_n}{\partial_{z}^{k}\hat{G}_n}(x,z)\Big)=  - \sum_{j=1}^{\delta_k}\frac {\partial_{z}^{k+1}\hat{G}_n}{\partial_x\partial_{z}^{k}\hat{G}_n}(y_j,z) \frac{1}{(x-y_j)^2} -   \sum_{j=1}^{\delta_k}\frac {\partial_{z}^{k+1}\hat{G}_n}{\partial_x\partial_{z}^{k}\hat{G}_n}(-y_j,z) \frac{1}{(x+y_j)^2},
\]
and noticing that $-\frac {\partial_{z}^{k+1}\hat{G}_n}{\partial_x\partial_{z}^{k}\hat{G}_n}(y_j,z)= \frac{d y_j(z)}{dz}$, and  $-\frac {\partial_{z}^{k+1}\hat{G}_n}{\partial_x\partial_{z}^{k}\hat{G}_n}(-y_j,z)= -\frac{d y_j(z)}{dz}$, we get, evaluating at $x=\tilde{y_i}(z)$,
\begin{align}
\partial_x\Big(\frac{\partial_{z}^{k+1}\hat{G}_n}{\partial_{z}^{k}\hat{G}_n}(\tilde{y_i}(z),z)\Big)&= \sum_{j=1}^{\delta_k}\frac{dy_j}{dz} \big[ \frac{1}{(\tilde{y_i}-y_j)^2}- \frac{1}{(\tilde{y_i}+y_j)^2} \big] \\
&= 4\tilde{y_i} \sum_{j=1}^{\delta_k}\frac{\frac{dy_j}{dz} y_j}{(\tilde{y_i}-y_j)^2(\tilde{y_i}+y_j)^2}.
\end{align}
 As we have $\tilde{y_i}>0$, and for all $j$ such that $y_j > 0$, $\frac{d y_j}{dz}y_j<0$,
 \[
\partial_x\Big(\frac{\partial_{z}^{k+1}\hat{G}_n}{\partial_{z}^{k}\hat{G}_n}(\tilde{y_i}(z),z)\Big)< 0.
\]

On the other hand, for all $y_0 \in [-1,1]$,  ${\hat{G}_n}(y_0,z)$ is realrooted in $z$ with simple roots (according to Corollary~\ref{Gegenbauerzmod}), therefore $\partial_{z}^{k}\hat{G}_n(y_0,z)$ is also realrooted with simple roots as  the $k^{th}$ derivative in $z$, and we get that $ [\partial_{z}^{k+2}\hat{G}_n\partial_{z}^{k}\hat{G}_n-(\partial_{z}^{k+1}\hat{G}_n)^2](y_0,z) < 0$ for all $z$ using Lemma~\ref{Laguerrein}. Specializing for $y_0=\tilde{y_i}(z)$ which by interlacing does belong to  $[-1,1]$, we have for all $z$,    $ [\partial_{z}^{k+2}\hat{G}_n\partial_{z}^{k}\hat{G}_n-(\partial_{z}^{k+1}\hat{G}_n)^2](\tilde{y_i},z)< 0$ from where it follows that
\[
\partial_z\Big(\frac{\partial_{z}^{k+1}\hat{G}_n}{\partial_{z}^{k}\hat{G}_n}(\tilde{y_i}(z),z)\Big) < 0.
\]

We conclude by gathering the two inequalities.
\end{proof}
Eventually, we proved Theorem~\ref{GegenbauerD} by induction.

\end{document}